\documentclass[11pt]{article}

\usepackage[T1]{fontenc}
\usepackage{lmodern}
\usepackage{amsmath,amssymb,amsthm,mathtools}
\usepackage{mathrsfs}
\usepackage{microtype}
\usepackage{enumitem}
\usepackage{geometry}
\usepackage[
  backend=biber,
  style=alphabetic
]{biblatex}
\usepackage[hidelinks]{hyperref}

\allowdisplaybreaks[2]
\setlist{leftmargin=*,itemsep=0.25em,topsep=0.4em}

\newtheorem{theorem}{Theorem}[section]
\newtheorem{lemma}[theorem]{Lemma}
\newtheorem{proposition}[theorem]{Proposition}
\newtheorem{corollary}[theorem]{Corollary}
\newtheorem{conjecture}[theorem]{Conjecture}

\newtheorem{maintheorem}{Theorem}

\theoremstyle{definition}
\newtheorem{definition}[theorem]{Definition}
\newtheorem{example}[theorem]{Example}

\theoremstyle{remark}
\newtheorem{remark}[theorem]{Remark}

\newcommand{\CC}{\mathbb{C}}

\newcommand{\FF}{\mathcal F}

\title{Holonomy and Integrability of $p$-Closed Foliations}
\author{Yuting Liu}
\date{}

\begin{document}

\maketitle

\begin{abstract}
Motivated by the Ekedahl--Shepherd-Barron--Taylor conjecture, we study
arithmetic criteria for the global and local integrability of foliations in
characteristic zero. Our main result is a finite-holonomy theorem for the
regular parts of invariant prime divisors of corank one foliations which are
\(p\)-closed for almost all primes. As a consequence, we give a complete
proof for corank one foliations of the statement proposed by
Ekedahl--Shepherd-Barron--Taylor: a foliation on a smooth
projective variety which is \(p\)-closed for almost all primes is
algebraically integrable whenever it admits a compact leaf.

We also introduce a local analytic notion of \(p\)-closedness for foliation
germs on normal complex varieties and formulate local conjectures relating
it to meromorphic and holomorphic first integrals. We establish holomorphic
integrability for analytically \(p\)-closed foliations with canonical
singularities on smooth surfaces and on klt surface germs. Combining these
local results with the finite-holonomy theorem, we obtain holomorphic first
integrals for certain analytically \(p\)-closed non-dicritical surface germs
and for analytically \(p\)-closed simple singularities in arbitrary
dimension.

 Finally, we formulate a
connection-theoretic counterpart to local holomorphic integrability in terms
of flat meromorphic extensions of the Bott partial connection, and verify it
in dimension two.
\end{abstract}

\tableofcontents

\section{Introduction}
 The existence of first integrals is a fundamental problem in the theory of
foliations. When both the ambient space and the foliation are smooth, the
classical Frobenius theorem says that the foliation is locally induced by a
holomorphic submersion, and hence admits local holomorphic first integrals. In
the presence of singularities, however, the existence of such first integrals
is much more delicate.

For corank one foliations on smooth complex varieties, Malgrange proved that a local
holomorphic first integral exists provided that the singular locus has
codimension at least three \cite{Malgrange1976}. This result was subsequently
extended in several directions. Cerveau and Lins Neto treated singular ambient
spaces with mild singularities \cite{CerveauLinsNeto2008}, while related
results for foliations on normal threefolds were obtained by Spicer
\cite{Spicer2020} and by Spicer--Svaldi \cite{SpicerSvaldi2021}. Further
results on holomorphic first integrals for singular foliations appear, for
instance, in the work of Mattei--Moussu \cite{MatteiMoussu1980} and Moussu
\cite{Moussu1998}.

A global analogue is the problem of algebraic integrability. A foliation is
algebraically integrable if its general leaves are algebraic, or equivalently
if it is induced by a rational map. Ekedahl, Shepherd-Barron, and Taylor
proposed the following arithmetic criterion.

\begin{conjecture}[\cite{EkedahlShepherdBarronTaylor1999}]\label{conj}
Let \(X\) be a normal variety and let \(\mathcal F\) be a foliation on \(X\).
Then \(\mathcal F\) is algebraically integrable if and only if it is
\(p\)-closed modulo almost all primes.
\end{conjecture}

We recall the definition of \(p\)-closedness in
Definition~\ref{pclosedalgdef}. Conjecture~\ref{conj} remains open even for
surfaces. Bost proved it for regular foliations under the additional
assumption that the general leaves satisfy the Liouville property
\cite{Bost2001}; more recently, Druel obtained algebraicity results for
separatrices of reduced surface foliations \cite{Druel2025}. The conjecture is
closely related to the Grothendieck--Katz \(p\)-curvature conjecture
\cite{Katz1972} (See also Conjecture \ref{grothendieckkatz}): both express the principle that arithmetic integrability after
reduction modulo almost all primes should force geometric integrability in
characteristic zero. In fact, Conjecture~\ref{conj} implies the
Grothendieck--Katz conjecture; see \cite{Xu2025}.

The first main theorem of this paper is the following finiteness theorem for holonomy of an invariant divisor, which has applications in both algebraic and analytic integrability of foliations.

\begin{maintheorem}\label{theoremA}
Let \(X\) be a smooth variety and let \(\mathcal F\) be a corank one
foliation which is \(p\)-closed for almost all primes. Let \(D\subset X\) be
an irreducible \(\mathcal F\)-invariant prime divisor. Then every connected component of
\[
D^\circ:=D_{\mathrm{reg}}\setminus\operatorname{Sing}(\mathcal F)
\]
has finite holonomy group.
\end{maintheorem}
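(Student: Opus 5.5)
The plan is to reduce the statement to the Grothendieck--Katz type result for rank one connections (Chudnovsky/Bost/Katz for abelian or solvable monodromy), via the Bott partial connection on the conormal bundle restricted to \(D\).

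\textbf{Step 1: the Bott connection on \(D^\circ\).} Since \(\mathcal F\) has corank one and \(D\) is \(\mathcal F\)-invariant, \(\mathcal F|_{D^\circ}\) is the foliation by \(D^\circ\) itself, i.e.\ \(T_{D^\circ}=T_{\mathcal F}|_{D^\circ}\). Write \(\mathcal F\) locally as \(\ker\omega\) for a twisted \(1\)-form \(\omega\) with values in the normal bundle \(N_{\mathcal F}\). The Bott partial connection on \(N_{\mathcal F}\) restricted to \(D^\circ\) is then a genuine flat algebraic connection \(\nabla\) on the line bundle \(L:=N_{\mathcal F}|_{D^\circ}\cong N_{D/X}|_{D^\circ}\). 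The linear holonomy of each leaf (a connected component of \(D^\circ\)) is the monodromy of \(\nabla\).

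I will first reduce finiteness of the full holonomy group to finiteness of its linear part. Holonomy of a codimension-one leaf is a subgroup of \(\mathrm{Diff}(\CC,0)\). Because \(D\) is a prime divisor, its holonomy has a local equation tangent to the foliation. So the first jet of holonomy is the monodromy of \(\nabla\), and the higher jets are governed by the \(p\)-closedness of higher infinitesimal neighbourhoods.

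\textbf{Step 2: \(p\)-curvature vanishes.} Spread out \(X,\mathcal F,D\) over a finitely generated \(\mathbb Z\)-algebra. For almost all \(p\), \(\mathcal F_p\) is \(p\)-closed, meaning \(T_{\mathcal F_p}\) is stable under \(v\mapsto v^p\). For \(v\in T_{D^\circ_p}\), lift \(v\) to a local section of \(T_{\mathcal F_p}\). The Bott connection is \(\nabla_v s=[v,\tilde s]\bmod T_{\mathcal F}\), so its \(p\)-curvature is \(\psi(v)=\nabla_v^p-\nabla_{v^p}\), and this equals the action of \(\mathrm{ad}(v)^p-\mathrm{ad}(v^p)=0\) on the quotient. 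Hence the \(p\)-curvature of \((L,\nabla)\) vanishes for almost all \(p\).

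\textbf{Step 3: finite monodromy and the higher jets.} A rank one connection, or more generally one with solvable monodromy, with vanishing \(p\)-curvature for almost all \(p\) has finite monodromy, by Katz's and Chudnovsky's results on the Grothendieck--Katz conjecture. This gives finite linear holonomy. For the higher jets, I would work on the formal neighbourhoods \(D^{(k)}\) of \(D\). The holonomy group \(G\subset\mathrm{Diff}(\CC,0)\) is a quotient of \(\pi_1(D^\circ)\).

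If \(G\) were infinite with finite linear part, then a finite-index subgroup would be tangent to the identity. Such a subgroup is abelian up to a finite cover, being a subgroup of a formal flow of \(z^{k+1}\partial_z\). Its first nontrivial jet defines a nonzero class in \(H^1(D^\circ, L^{-k})\) twisted by \(\nabla\), arising as the obstruction to extending a defining function of \(D\) to order \(k+1\) in a \(\mathcal F\)-invariant way. Equivalently, it gives a unipotent rank two flat connection (an extension of \(\mathcal O\) by \(L^{\otimes k}\)) built from the \(k\)-th order neighbourhood. Step 2 applied in that neighbourhood shows that this extension also has vanishing \(p\)-curvature for almost all \(p\). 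Its monodromy is solvable, so Chudnovsky's theorem forces the monodromy to be finite. A unipotent finite group is trivial, which is a contradiction.

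Iterating over \(k\) shows that \(G\) is formally linearizable and conjugate to its finite linear part. A finite group of formal diffeomorphisms is analytically linearizable, so \(G\) is finite.

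The \textbf{main obstacle} is this step: making precise that the first nontrivial higher jet of holonomy defines an algebraic flat connection over \(D^\circ\) whose \(p\)-curvature is controlled by \(p\)-closedness of \(\mathcal F\) near \(D\). The difficulty is that \(D^\circ\) is quasi-projective, and algebraicity of the jet data must come from the formal neighbourhood of \(D\) in \(X\). I would first try to realize it through the \(\mathcal F\)-invariant jets of a local equation \(f\) of \(D\), writing \(\omega=g\,(df/f+\eta)\)-type normal forms along \(D^\circ\).
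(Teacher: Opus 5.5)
Your overall strategy matches the paper's: the Bott connection on the normal line bundle of \(D^\circ\), vanishing \(p\)-curvature via \(\operatorname{ad}(\tilde v)^p=\operatorname{ad}(\tilde v^p)\), rank-one Grothendieck--Katz for the linear holonomy, and unipotent extensions for the higher jets. The linear part is fine. The gap is the higher-jet step, which is the real content of the theorem. You flag it yourself as the main obstacle, and the justification you give, ``Step~2 applied in that neighbourhood'', does not work. Step~2 only computes the \(p\)-curvature of the Bott connection on \(N_{\mathcal F}|_{D^\circ}\). It says nothing about the rank-two object encoding the \((k+1)\)-jet, and realising that object as an \emph{algebraic} flat connection is itself what must be proved. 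On the non-affine \(D^\circ\), an invariant lift of the defining function to the next order need not exist globally: there is a coherent \(H^1(D^\circ,(N^*_{\mathcal F})^{\otimes m})\) obstruction. So it is unclear how to present the jet class as something whose reductions mod \(p\) you can control.

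The paper supplies exactly this construction:
\begin{enumerate}
\item It shrinks to a dense affine \(U\subset L\) with \'etale coordinates \((x_1,\ldots,x_{n-1},y)\) and \(U=(y=0)\), using that \(\pi_1(U)\to\pi_1(L)\) is surjective.
\item It takes the commuting frame \(\xi_i=\partial_{x_i}+A_i\partial_y\), normalised by \(\xi_i(x_j)=\delta_{ij}\), so that \(p\)-closedness forces \(\xi_i^{p}=0\) exactly, not merely \(\xi_i^p\in T_{\mathcal F}\).
\item It passes to the finite \'etale cover \(U'\), extended to all thickenings, on which \(\mathcal I'/\mathcal I'^2\) has a horizontal generator \(z_1\).
\item Given a horizontal \(z_{m-1}\) and any lift \(z\) (which exists because \(U'\) is affine), it writes \(\xi_i(z)=b_iz_1^m \bmod \mathcal I'^{m+1}\).
\end{enumerate}
Commutation of the \(\xi_i\) makes \(\beta_m=\sum_i b_i\,dx_i\) a closed algebraic \(1\)-form. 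Applying \(\xi_i^p=0\) to \(z\) gives \(V_i^{p-1}(b_i)=0\), which is precisely the vanishing of the \(p\)-curvature of the \(\mathbb G_a\)-connection \(\ker(ds+\beta_m)\). This computation is what your proposal is missing.

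Two further corrections. First, the arithmetic input for the extension step is not Chudnovsky's theorem, which is a rank-one statement. You need Andr\'e's theorem that an extension of isotrivial connections with vanishing \(p\)-curvature is isotrivial (or, over number fields, Bost's solvable case). The paper applies it to the unipotent rank-two connection attached to \(\beta_m\) and then averages over a Galois cover to get an algebraic \(c_m\) with \(dc_m=-\beta_m\). This kills the order-\(m\) obstruction, and iterating yields a formal horizontal coordinate showing that loops in \(U'\) have trivial holonomy. Second, your claim that a subgroup of \(\operatorname{Diff}(\mathbb C,0)\) tangent to the identity is virtually abelian, ``being a subgroup of a formal flow'', is false, since such groups can be free. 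You do not need it: on the subgroup whose holonomy is trivial up to order \(k\), the \((k+1)\)-st Taylor coefficient is already a homomorphism to \((\mathbb C,+)\).
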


Theorem \ref{theoremA} implies the following corollary which was first stated in the unpublished manuscript \cite[Proposition~2.5]{EkedahlShepherdBarronTaylor1999}: 

\begin{corollary}\label{compactleafalgebraic}
Let \(X\) be a smooth projective variety and let \(\mathcal F\) be a corank
one foliation which is \(p\)-closed for almost all primes. If \(\mathcal F\)
has a compact leaf, then \(\mathcal F\) is algebraically integrable.
\end{corollary}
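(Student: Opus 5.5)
We plan to derive Corollary~\ref{compactleafalgebraic} from Theorem~\ref{theoremA} together with a Reeb-type stability argument and a classical algebraicity criterion. Let \(L\) be a compact leaf of \(\mathcal F\). A leaf is by definition contained in \(X\setminus\operatorname{Sing}(\mathcal F)\), so compactness of \(L\) in the analytic topology makes \(L\) a closed complex submanifold of codimension one in the projective manifold \(X\). By Chow's theorem, \(\overline{L}=L\) is then an algebraic hypersurface. Take \(D\) to be an irreducible component of it; since \(L\) is connected and smooth, in fact \(D=L\). This \(D\) is an \(\mathcal F\)-invariant prime divisor with \(D\cap \operatorname{Sing}(\mathcal F)=\emptyset\) and \(D_{\mathrm{reg}}=D\). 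Hence \(D^\circ=D=L\) is connected, and Theorem~\ref{theoremA} says that the holonomy group of \(L\) is finite.

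The second step is Reeb local stability for codimension one foliations: a compact leaf with finite holonomy has a saturated tubular neighborhood \(U\) in which every leaf is compact and is a finite covering of \(L\). Concretely, we take a transversal disc \(T\) through a point of \(L\). The holonomy group \(G\) is a finite group of germs of biholomorphisms of \((T,0)\), so after shrinking \(T\) it is represented by a genuine finite group of automorphisms. The local leaf space near \(L\) is then \(T/G\), which is again a disc, with coordinate \(f=t^{|G|}\) after linearizing \(G\) to a cyclic rotation group. This \(f\) extends to a holomorphic first integral \(f\colon U\to\Delta\) whose fibres are compact leaves, with multiple fibre \(L\). Consequently uncountably many leaves of \(\mathcal F\) are compact, and by Chow's theorem again each of them is algebraic.

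The final step passes from many algebraic leaves to algebraic integrability. We invoke the Jouanolou--Ghys type theorem: if a codimension one foliation on a projective manifold has infinitely many invariant hypersurfaces, then it admits a rational first integral. Equivalently, one can argue directly. The leaves in \(U\) form an analytic family of divisors in a Chow or Hilbert scheme of bounded degree, since they are homologous to multiples of \(L\). The corresponding component of the Chow variety then yields a rational map \(X\dashrightarrow C\) whose fibres are leaves, so the general leaf is algebraic and \(\mathcal F\) is algebraically integrable.

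The main obstacle is the second step. We must make sure that a finite holonomy group of germs acts on an honest transversal, and that the Reeb neighbourhood really produces compact, and not merely closed-in-\(U\), leaves. For holomorphic codimension one foliations this is standard: finite groups of germs in one variable are linearizable and cyclic. The delicate point is that \(L\) must avoid \(\operatorname{Sing}(\mathcal F)\), which holds by the definition of a leaf. The arithmetic input, \(p\)-closedness, enters only through Theorem~\ref{theoremA}.
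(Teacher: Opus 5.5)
Your proposal is correct and follows the paper's proof essentially step for step. Chow's theorem makes \(L\) a smooth invariant prime divisor with \(D^\circ=L\), Theorem~\ref{theoremA} gives finite holonomy, holomorphic Reeb local stability produces infinitely many compact (hence algebraic) leaves, and the Darboux--Jouanolou theorem yields a rational first integral; your alternative Chow-variety argument in the last step is only sketched (one would have to control the Zariski closure of the analytic family of leaves), but it is not needed since Jouanolou's theorem already suffices.
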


The proof in \cite{EkedahlShepherdBarronTaylor1999} considers the formal neighbourhood of the leaf and lifts
horizontal functions through successive infinitesimal neighbourhoods. The
higher-order step, however, asserts the vanishing of the relevant extension
classes after reduction without justifying this assertion. We will give a complete proof in this paper. 

We next turn to local analytic integrability. If a foliation is algebraically
integrable, then it is locally induced by a meromorphic first integral. Thus
Conjecture~\ref{conj} suggests that \(p\)-closedness modulo almost all primes
should force the existence of local meromorphic first integrals. The usual
arithmetic notion of \(p\)-closedness, however, depends on an algebraic model,
whereas the existence of a local first integral is invariant under analytic
changes of coordinates. To bridge this difference, we introduce in
Section~\ref{pclosedefinition} a local analytic notion of \(p\)-closedness.
Roughly speaking, a foliated germ is analytically \(p\)-closed if it is
analytically isomorphic to an algebraic germ which is \(p\)-closed modulo
almost all primes.

This leads to the following local conjectures.

\begin{conjecture}\label{conjecturemeromorphic}
Let \(X\) be a normal variety and let \(\mathcal F\) be a foliation on \(X\).
For every closed point \(x\in X\), the foliation \(\mathcal F\) is
analytically \(p\)-closed at \(x\) if and only if it is meromorphically
integrable at \(x\).
\end{conjecture}

\begin{conjecture}\label{conjectureholomorphiccanonical}
Let \(X\) be a normal variety and let \(\mathcal F\) be a foliation on \(X\).
Suppose that one of the following conditions holds:
\begin{enumerate}[label=\textup{(\arabic*)}]
\item \(\mathcal F\) has canonical singularities;
\item \(\mathcal F\) is non-dicritical.
\end{enumerate}
Then, for every closed point \(x\in X\), the foliation \(\mathcal F\) is
analytically \(p\)-closed at \(x\) if and only if it is analytically
integrable at \(x\).
\end{conjecture}

We refer to Definition~\ref{definitionpclosed} for analytic
\(p\)-closedness and to Definition~\ref{definitionanalyticintegral} for
analytic and meromorphic integrability. The ``if'' directions are immediate
from the definitions. The canonical cases established in the paper are
collected in the following theorem. They provide, in particular, the
primitive local models used in the applications of
Theorem~\ref{theoremA}.

\begin{theorem}\label{canonicalcases}
The following statements hold.
\begin{enumerate}[label=\textup{(\arabic*)}]
\item Let \(X\) be a smooth surface and let \(\mathcal F\) be a rank-one
foliation with canonical singularities. For every closed point \(x\in X\),
the foliation \(\mathcal F\) is analytically \(p\)-closed at \(x\) if and
only if it is analytically integrable at \(x\). Moreover, if
\(x\in\operatorname{Sing}(\mathcal F)\), then there are holomorphic
coordinates \(u,v\) centred at \(x\) and coprime positive integers \(a,b\)
such that
\[
T_{\mathcal F}
=
\mathcal O_X\left(
 bu\frac{\partial}{\partial u}
 -av\frac{\partial}{\partial v}
\right).
\]
Equivalently, \(u^av^b\) is a primitive holomorphic first integral.

\item Let \(X\) be a normal klt surface and let \(\mathcal F\) be a rank-one
foliation with canonical singularities. If \(\mathcal F\) is analytically
\(p\)-closed at a closed point \(x\), then it admits a holomorphic first
integral at \(x\).

\item Let \(X\) be smooth and let \(\mathcal F\) be a corank one foliation
with canonical singularities which is analytically \(p\)-closed at \(x\).
Suppose that either \(x\) is a general point of an irreducible component of
\(\operatorname{Sing}(\mathcal F)\) of codimension two, or every irreducible
component of \(\operatorname{Sing}(\mathcal F)\) containing \(x\) has
codimension at least three. Then \(\mathcal F\) admits a holomorphic first
integral at \(x\).
\end{enumerate}
\end{theorem}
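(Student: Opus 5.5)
We treat the three parts separately. Each part reduces the analytic germ to a normal form in which $p$-closedness can be tested directly on an algebraic model. The arithmetic input throughout is the following. If $\partial$ is a derivation with eigenvalues $\lambda_i$ on a linear part that is defined over a number field, then $p$-closedness of $\mathcal O\cdot\partial$ modulo almost all primes forces $\partial^p\in \mathcal O\cdot\partial$. On the linear part this gives $\lambda_i^p \equiv c\lambda_i$ for a common $c$. For almost all $p$ this forces the ratios $\lambda_i/\lambda_j$ to lie in $\mathbb F_p$, hence (by Chebotarev/Kronecker-type reasoning) to be rational. It also rules out nilpotent (saddle-node) parts: a nontrivial nilpotent linear part $N$ has $N^p=0$ but is not $\equiv 0$, which contradicts saturation.

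\textbf{Part (1).} At a regular point, Frobenius gives a first integral and $p$-closedness is automatic, so assume $x\in\operatorname{Sing}(\mathcal F)$. Canonical singularities on a smooth surface are reduced in the sense of Seidenberg. The linear part is therefore either nondegenerate with eigenvalue ratio $\lambda\notin\mathbb Q_{>0}$, or a saddle-node. First I pass to an algebraic model provided by analytic $p$-closedness, and apply the eigenvalue argument to exclude the saddle-node and to get $\lambda=-a/b\in\mathbb Q_{<0}$. The remaining task is to linearize this resonant saddle, which is the main obstacle. Here I would use Mattei--Moussu: a resonant saddle with a holomorphic first integral is linearizable, and a holomorphic first integral exists iff the holonomy of a separatrix is finite. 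The holonomy of the separatrix $\{v=0\}$ (after removing the point) is exactly the holonomy group of a component of $D^\circ$ for the invariant curve $D$. Theorem~\ref{theoremA} is stated for algebraic $X$, so I would apply it on the algebraic model, after shrinking to a neighbourhood in which the separatrix is algebraic. If it is not, I would instead work with the formal completion and argue that the finite-holonomy proof of Theorem~\ref{theoremA} is local along $D^\circ$. Finite holonomy then gives the normal form $bu\partial_u-av\partial_v$ with first integral $u^av^b$.

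\textbf{Part (2).} Take the index-one cover or the minimal resolution. On a klt surface germ $x$ is a quotient singularity $\CC^2/G$. I pull $\mathcal F$ back along the quasi-étale cover $\CC^2\to X$, where canonicity is preserved. Analytic $p$-closedness also lifts: the cover is algebraic after passing to the model, and $p$-closedness is preserved by étale-in-codimension-one pullback away from finitely many primes. Part (1) then gives a first integral $f$ upstairs, and $\prod_{g\in G}g^*f$ descends to a holomorphic first integral at $x$.

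\textbf{Part (3).} If every component of $\operatorname{Sing}(\mathcal F)$ through $x$ has codimension $\ge 3$, Malgrange's theorem applies directly with no $p$-closedness needed. At a general point of a codimension-two component, the foliation is locally a product of a surface foliation germ with a smooth factor (the local product structure for canonical, hence non-degenerate, singularities at general points of codim-two strata). A transverse surface slice inherits canonical singularities and analytic $p$-closedness, the latter by restricting the algebraic model to a general algebraic surface section. Part (1) yields $u^av^b$ on the slice, and pulling it back along the product projection gives the first integral.
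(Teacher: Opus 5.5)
Your Part~(1) has a genuine gap, and Parts~(2) and~(3) inherit it because both reduce to it. Your rationality argument for the eigenvalue ratio is fine. The same linear-part test, however, does not exclude saddle-nodes. A saddle-node does not have a nilpotent linear part: its eigenvalues are $\lambda\neq0$ and $0$. The matrix $L=\operatorname{diag}(\lambda,0)$ satisfies $L^p=\lambda^{p-1}L$, so $L^p=cL$ holds and no contradiction arises. (A nilpotent $N$ also satisfies $N^p=0=0\cdot N$, though that case is not reduced anyway.) For instance, $v=x^2\partial_x+y\partial_y$ passes your test for every $p$. It fails to be $p$-closed only because of its nonlinear part, since $v^p=y\partial_y$. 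Excluding saddle-nodes therefore needs information beyond the $1$-jet. The paper gets it from McQuillan's formal linearisation with integral weights (Proposition~\ref{mcq08}): a zero weight would make the formal singular locus a curve (Lemma~\ref{pclosednotsaddle}).

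Second, and more seriously, the step you call the main obstacle cannot be done with Theorem~\ref{theoremA}. That theorem needs an algebraic $\mathcal F$-invariant prime divisor. The separatrices of the resonant saddle in the algebraic model are analytic germs not known to be algebraic; their algebraicity is a problem of the same depth as Conjecture~\ref{conj}, and no Zariski shrinking changes this. Nor is the proof of Theorem~\ref{theoremA} local along $D^\circ$. It uses the Bott connection on the conormal bundle of an affine open subset of an algebraic divisor, the Chudnovsky--Katz rank-one case of Grothendieck--Katz, and Andr\'e's theorem. These are statements about algebraic connections spread out over a finitely generated ring. A flat analytic line bundle on a punctured disc has no reduction modulo $p$ and can have arbitrary monodromy.

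The missing idea is an arithmetic-to-formal statement at the singular point itself. Proposition~\ref{mcq08} gives formal coordinates in which $\mathcal F$ is generated by $\eta\hat u\partial_{\hat u}+\mu\hat v\partial_{\hat v}$ with $\eta,\mu\in\mathbb Z$. After excluding $\eta\mu>0$, this yields the formal first integral $\hat u^a\hat v^b$. Mattei--Moussu's Th\'eor\`eme~A makes a one-variable reparametrisation of it convergent. Convergence of the separatrices and unique factorisation then give $F=u^av^b\varepsilon$, and the unit is absorbed into $v$.

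Two smaller points. You never prove the converse direction at singular points; it follows from $v(f)=0\Rightarrow v^p(f)=0$ on a model with an algebraic first integral, plus saturation. In Part~(3), ``canonical, hence non-degenerate'' is false, since saddle-nodes are canonical. The local product structure you invoke also needs a genuine argument or reference. The paper avoids it by lifting the first integral from a general transverse surface via Mattei--Moussu's Corollaire~C (Lemma~\ref{MM80}). Part~(2) agrees with the paper.
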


Theorem \ref{theoremA} also yields local holomorphic first integrals in certain non-dicritical surface case where the singularity is mild in the following sense:

\begin{corollary}\label{nondicriticalsurfaceapplication}
Let \(X\) be a normal klt surface and let \(\mathcal F\) be a
non-dicritical foliation on \(X\). Suppose that \(\mathcal F\) is
analytically \(p\)-closed at \(x\in X\). Assume that \(\mathcal F\)
admits a reduction of singularities over \(x\) for which one of the
following conditions holds:
\begin{enumerate}[label=\textup{(\arabic*)}]
\item the reduced exceptional fibre is irreducible;
\item the reduction is non-branching, i.e. every irreducible exceptional component contains at most two
singular points of the transformed foliation.
\end{enumerate}
Then \(\mathcal F\) admits a holomorphic first integral at \(x\).
\end{corollary}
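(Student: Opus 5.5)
Here is the plan: resolve the singularity of \(\mathcal F\) at \(x\), reduce everything to finite holonomy of the exceptional curve, and then descend the resulting first integral back to the germ at \(x\).

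\textbf{Step 1: setting up the model.} Since \(\mathcal F\) is analytically \(p\)-closed at \(x\), the definition lets us replace the germ \((X,\mathcal F,x)\) by an algebraic germ that is \(p\)-closed modulo almost all primes. Take a resolution of the surface followed by the assumed reduction of singularities, \(\pi\colon Y\to X\). It can be realised algebraically. On a dense open set, \(p\)-closedness is a condition on the \(p\)-th powers of local vector fields tangent to the foliation, so it is preserved by birational pullback. Hence \(\mathcal G=\pi^{-1}\mathcal F\) on the smooth surface \(Y\) is again \(p\)-closed for almost all primes.

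\textbf{Step 2: the exceptional fibre and its singular points.} Because \(\mathcal F\) is non-dicritical, every component \(E_i\) of the exceptional fibre \(E=\pi^{-1}(x)_{\mathrm{red}}\) is \(\mathcal G\)-invariant. The singularities of \(\mathcal G\) are reduced. At each of them \(\mathcal G\) is analytically \(p\)-closed, so saddle-nodes are excluded: they are never \(p\)-closed, since their formal normal form has non-rational invariants. Hence \(\mathcal G\) has canonical singularities there. By Theorem~\ref{canonicalcases}(1), each singular point has a linearisable normal form \(bu\,\partial_u-av\,\partial_v\) with first integral \(u^av^b\), where the local branches of \(E\) are among \(\{u=0\},\{v=0\}\). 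Theorem~\ref{theoremA}, applied to the invariant prime divisor \(E_i\subset Y\), shows that \(E_i^\circ=E_i\setminus(\operatorname{Sing}\mathcal G\cup\text{other components})\) has finite holonomy. Note that \(E_i\) is smooth after reduction, or else we pass to its regular part.

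\textbf{Step 3: building a first integral near \(E\).} In case (1), \(E=E_1\) is a single smooth rational curve. Finite holonomy \(\langle h\rangle\) of the punctured curve lets us linearise \(h\) as \(z\mapsto\zeta z\) of order \(m\). The function \(z^m\) on a transversal then extends by holonomy transport to a holomorphic first integral on a neighbourhood of \(E^\circ\). Near each singular point \(q\), the local integral \(u^av^b\) and the transported one agree up to a holomorphic change in the target, by comparing holonomy around the separatrix \(\{v=0\}\subset E\). So they glue to a first integral \(F\) on a neighbourhood of \(E\) with \(F|_E\equiv0\).

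In case (2), the dual graph is a chain because the reduction is non-branching. I would glue inductively along the chain. The holonomy of \(E_i\) around an intersection point \(q=E_i\cap E_{i+1}\) is determined by the linear model at \(q\): it is \(v\mapsto e^{2\pi i a/b}v\) in the transversal variable. The first integral \(u^av^b\) matches the \(m\)-th power construction on both sides. Powers are chosen so that adjacent integrals agree, by replacing each with a suitable power and absorbing units via the finite-holonomy linearisation. This produces a first integral \(F\) on a neighbourhood of \(E\) vanishing exactly on \(E\) and the strict transforms of the separatrices. Since \(E\) has negative definite intersection matrix, \(F\) is holomorphic on a neighbourhood of a contractible configuration. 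It therefore descends to \(\pi_*F\), a holomorphic function on the normal germ \((X,x)\), which is a nonconstant first integral of \(\mathcal F\).

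\textbf{Main obstacle.} I expect the main obstacle to be the gluing in Step 3. Finite holonomy on each open piece gives integrals only up to post-composition by local biholomorphisms of \((\CC,0)\). One must check that at the corners these normalisations are compatible, so that a single global function results rather than a multivalued one. In case (2) I would control this by normalising each piece's integral as \(t^{m_i}\) in the linearising coordinate of the cyclic holonomy, which is determined up to a scalar. The compatibility condition at each corner then becomes an equality of exponents, which can be achieved by taking common multiples along the chain. For branching trees, this equality can fail because a vertex might have to satisfy three independent constraints, which explains the hypothesis.
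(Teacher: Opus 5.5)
Your set-up and case~(1) follow the paper: your Step~3 for a single component is Proposition~\ref{firstintegralnearcomponent}. Your reason for excluding saddle-nodes is not the right one, though. A saddle-node with rational formal invariants is still not $p$-closed; the actual argument is Lemma~\ref{pclosednotsaddle}, which combines McQuillan's formal linearisation with integral weights and the isolatedness of the singularity.

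The genuine gap is the gluing in case~(2). At a corner $q=E_i\cap E_{i+1}$, both constructed integrals are functions of the primitive one: $H_i=\alpha\circ F_q$ and $H_{i+1}=\beta\circ F_q$, with $F_q=u^av^b$. Your mechanism is to normalise each piece as $t^{m_i}$ in a linearising coordinate ``determined up to a scalar'' and then match exponents by common multiples. This does not work, for two reasons.
\begin{itemize}
\item A linearising coordinate for $\langle t\mapsto\zeta t\rangle$ of order $m$ is only determined up to $t\mapsto t\,\phi(t^m)$, not up to a scalar.
\item If $\beta$ has multiplicity $>1$, finding nonconstant germs $\delta,\gamma$ with $\delta\circ\alpha=\gamma\circ\beta$ requires the cyclic deck groups of $\alpha$ and $\beta$ in $\operatorname{Diff}(\mathbb C,0)$ to lie in a common finite group. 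Equal leading exponents do not give this: $s^2$ and $(s+s^2)^2$ do not differ by a target biholomorphism. Raising the already-glued function to powers does not help either.
\end{itemize}

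The missing idea is primitivity, and this is exactly where non-branching enters.
\begin{itemize}
\item Each corner is a singular point of $\mathcal G$, since two invariant curves cross there. So non-branching means $E_i^\circ\simeq\mathbb C$ or $\mathbb C^*$.
\item Hence $\pi_1(E_i^\circ)$ is generated by a small loop around either puncture. The local holonomy at $q$, which is cyclic of order $a$ by the monomial model, is therefore the whole holonomy group $\Gamma_i$.
\item The multiplicity count in Proposition~\ref{firstintegralnearcomponent} gives $\operatorname{mult}_0\varphi_q=|\Gamma_i|/a=1$. So $H_i$ is primitive at $q$, and likewise $H_{i+1}$.
\item By Lemma~\ref{primitivefirstintegrals}, $\alpha$ and $\beta$ are biholomorphic. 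You simply replace $H_{i+1}$ by $\alpha\circ\beta^{-1}\circ H_{i+1}$, inductively along the chain, with no powers taken.
\end{itemize}
Given primitivity at every corner, the same works along any tree, since each new component meets the earlier ones at a single point.

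Accordingly, your explanation of the hypothesis (``three independent constraints at a vertex'') is off. What branching destroys is primitivity: with three or more punctures the local holonomy subgroups can be proper subgroups of $\Gamma_i$, so $\varphi_q$ has multiplicity $|\Gamma_i|/a>1$.

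The final descent is fine, but it comes from $\pi_*\mathcal O_Y=\mathcal O_X$ (normality of $X$ and properness of $\pi$), not from negative definiteness.
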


Here, by a \emph{reduction of singularities of
\((X,\mathcal F)\) over \(x\)} we mean, after possibly shrinking \(X\)
around \(x\), a proper birational morphism
\(
\pi:Y\longrightarrow X
\)
such that \(\pi\) is an isomorphism over \(X\setminus\{x\}\), the surface
\(Y\) is smooth, the reduced
exceptional curve
\(
E:=\pi^{-1}(x)_{\mathrm{red}}
\)
is a simple normal crossing divisor,
and the transformed foliation
\(
\mathcal G:=\pi^{-1}\mathcal F
\)
has only reduced singularities in a neighbourhood of \(E\). See also Definition \ref{definitionnonbranching}.

A second local application treats simple singularities in arbitrary
dimension.

\begin{corollary}\label{simpleintegrability}
Let \(X\) be a smooth variety and let \(\mathcal F\) be a corank one
foliation with a simple singularity at a closed point \(x\in X\). If
\(\mathcal F\) is analytically \(p\)-closed at \(x\), then it admits a
holomorphic first integral at \(x\).
\end{corollary}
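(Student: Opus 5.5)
The plan is to reduce, via the normal form of simple singularities, to a question about the holonomy of the coordinate hyperplanes through \(x\). Theorem~\ref{theoremA} controls that holonomy, and a Mattei--Moussu type argument then turns finite holonomy into a holomorphic first integral.

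\textbf{Step 1: normal form and invariant hyperplanes.} Let \(r\ge 2\) be the dimensional type of \(\mathcal F\) at \(x\). By the definition of simple singularities (Cano--Cerveau), after an analytic change of coordinates \(\mathcal F\) is, near \(x\), the pull-back of a foliation in \(r\) variables \(x_1,\dots,x_r\) under a projection. It is defined by a form
\[
\omega = x_1\cdots x_r\Bigl(\sum_{i=1}^r \lambda_i \frac{dx_i}{x_i} + \text{(resonant or saddle-node terms)}\Bigr).
\]
For this form:
\begin{itemize}
\item the hyperplanes \(H_i=\{x_i=0\}\) are \(\mathcal F\)-invariant;
\item \(\operatorname{Sing}(\mathcal F)\) near \(x\) is contained in \(\bigcup_{i<j}\{x_i=x_j=0\}\).
\end{itemize}

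\textbf{Step 2: apply Theorem~\ref{theoremA}.} By Definition~\ref{definitionpclosed} we may replace the germ by an algebraic germ on a smooth variety which is \(p\)-closed for almost all primes. Choose this model so that each \(H_i\) is the germ of an invariant prime divisor \(D_i\). I expect this to need an approximation (Artin-type) argument: the invariant hypersurfaces through a simple singularity are exactly the \(H_i\), which are algebraic on the algebraic model because they are the only invariant hypersurface germs there. Theorem~\ref{theoremA} then says that the component of \(D_i^\circ\) containing \(H_i\setminus\bigcup_{j\neq i}H_j\) near \(x\) has finite holonomy group. Locally this component is a product of \((\CC^*)^{r-1}\) with a polydisc. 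Its holonomy is generated by the commuting germs \(h_{ij}\in\operatorname{Diff}(\CC,0)\) obtained by looping around \(H_j\), for \(j\ne i\), and
\[
h_{ij}'(0)=\exp\!\left(-2\pi\sqrt{-1}\,\lambda_j/\lambda_i\right).
\]

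\textbf{Step 3: finite holonomy excludes the bad cases.}
\begin{itemize}
\item Finiteness forces every ratio \(\lambda_j/\lambda_i\) to be rational. Hence all \(\lambda_i\neq 0\), which excludes saddle-node type simple singularities: there some \(\lambda_i=0\), and the holonomy of the strong separatrix is tangent to the identity but nontrivial, hence of infinite order.
\item Each finite group of germs in \(\operatorname{Diff}(\CC,0)\) is linearizable. A resonant nonlinearizable normal form has holonomy \(z\mapsto e^{2\pi\sqrt{-1}q}z+cz^{k+1}+\cdots\) of infinite order, so linearizability kills the resonant terms. This is the Mattei--Moussu/Martinet--Ramis dichotomy, which I would run on a generic transverse plane section.
\item By the simplicity (non-resonance) condition, no relation \(\sum m_i\lambda_i=0\) with \(m_i\ge 0\) can be degenerate. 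Together with rationality, the \(\lambda_i\) become, after scaling, nonzero integers that are not all of one sign.
\end{itemize}

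\textbf{Step 4: build the first integral.} Here I would avoid committing to one route and instead take whichever of the following goes through.
\begin{itemize}
\item \emph{Mattei--Moussu route.} Use finiteness of the holonomy along \(H_1\) in the criterion of Mattei--Moussu, in its higher-dimensional form for corank one germs whose separatrix holonomy is finite and whose singular set is normal crossing. Extend the holonomy-invariant function \(z\mapsto z^{N}\), with \(N\) the order of the holonomy group, by holonomy transport along leaves. This gives a holomorphic first integral on the complement of \(\bigcup_{j\ge2}H_j\), and Riemann extension completes it.
\item \emph{Reduction to dimension two.} Restrict to generic 2-dimensional linear sections through \(x\). These inherit canonical singularities and analytic \(p\)-closedness, so Theorem~\ref{canonicalcases}(1) gives the monomial model \(u^av^b\) there. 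Then glue using the product structure from Step 1.
\end{itemize}
In both routes the first integral should be monomial, \(\prod x_i^{m_i}\) up to unit, with \(m_i\) obtained by clearing denominators in the \(\lambda_i\). That it is holomorphic rather than only meromorphic requires the signs of the \(m_i\) to be compatible, which is exactly where the sign information of Step 3 enters.

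\textbf{Main obstacle.} I expect the delicate points to be two. The first is Step 2: making the hyperplanes \(H_i\) algebraic invariant divisors on a \(p\)-closed algebraic model so that Theorem~\ref{theoremA} applies. The second is the sign bookkeeping that upgrades a meromorphic first integral to a holomorphic one.
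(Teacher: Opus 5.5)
Your proposal has a genuine gap at Step~2, and the rest of the argument depends on it. Theorem~\ref{theoremA} is about an \emph{algebraic} $\mathcal F$-invariant prime divisor $D$ of a variety on which $\mathcal F$ is $p$-closed for almost all primes. Its proof spreads out $D$, the Bott connection on $\mathcal I/\mathcal I^2$ and the infinitesimal neighbourhoods of $D$ over a finitely generated ring. It then applies the rank-one Grothendieck--Katz theorem and Andr\'e's theorem to algebraic connections on a Zariski open subset of $D$.

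In the $p$-closed model $(X_0,\mathcal F_0,x_0)$, the hyperplanes $H_i$ are only analytic germs. The weak separatrices of saddle-node type are possibly only formal, since the normal form of Definition~\ref{definitionsimple} is formal. Nothing in your argument makes the $H_i$ algebraic, and the suggested justification fails:
\begin{itemize}
\item Knowing that the $H_i$ are the only invariant hypersurface germs at $x_0$ says nothing about algebraicity. At best, a Galois argument puts the Taylor coefficients of their equations in a finite extension of the field of definition, and a series such as $e^{x}-1$ has rational coefficients without being algebraic. Algebraicity of separatrices of $p$-closed foliations is essentially an instance of Conjecture~\ref{conj} itself.
\item Artin approximation only gives algebraic hypersurfaces agreeing with $H_i$ to high order. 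These are not invariant and carry no holonomy.
\item You cannot ``choose'' the model. The hypothesis supplies some $p$-closed model, and any other algebraic germ would have to be proved $p$-closed separately.
\end{itemize}

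The missing idea is to manufacture the algebraic invariant divisor by a blow-up, which is what the paper does. Take a general algebraic complete-intersection surface $S_0\ni x_0$ in the $p$-closed model, satisfying the transversality hypotheses of Lemma~\ref{MM80}; then $\mathcal F_0|_{S_0}$ is still $p$-closed for almost all primes.

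If $r=2$, the restricted singularity is reduced by Proposition~\ref{simplesurfacesection}\textup{(1)}. Theorem~\ref{canonicalcases}\textup{(1)} then gives a first integral on $S_0$ with no holonomy argument at all.

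If $r\ge3$, your second route in Step~4 breaks down. On a generic plane through $x$, the restricted form has lowest-order part $\ell_1\cdots\ell_r\sum_i\rho_i\,d\ell_i/\ell_i$ of degree $r-1\ge2$. So the singularity is not reduced and not canonical: the invariant exceptional curve of its blow-up has discrepancy $2-r<0$. Hence Theorem~\ref{canonicalcases}\textup{(1)} cannot be applied at $x$.

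Instead, blow up $x_0$ in $S_0$. The exceptional curve $E\simeq\mathbb P^1$ is algebraic and invariant, and it carries exactly $r$ reduced singularities, at the points where the strict transforms of the curves $H_i\cap S$ meet $E$ (Proposition~\ref{simplesurfacesection}\textup{(2)}). The argument then runs as follows:
\begin{itemize}
\item the strict transform is $p$-closed by Corollary~\ref{pclosebirational};
\item saddle-nodes are excluded by Lemma~\ref{pclosednotsaddle};
\item Theorem~\ref{canonicalcases}\textup{(1)} gives primitive monomial first integrals at the $r$ points;
\item Theorem~\ref{theoremA} applied to $E$ gives finite holonomy;
\item Proposition~\ref{firstintegralnearcomponent}, as in Proposition~\ref{irreducibleexceptionalfibre}, gives a first integral near $E$, which descends to $S_0$ and is lifted to $X$ by Lemma~\ref{MM80}.
\end{itemize}
The local holonomies of $E$ at these $r$ points play exactly the role of your germs $h_{ij}$.

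Finally, your sign bookkeeping is reversed. Write $\omega=x_1\cdots x_r\sum_i\lambda_i\,dx_i/x_i$. Non-resonance together with $\lambda_i\in\mathbb Q^*$ forces all $\lambda_i$ to have the \emph{same} sign, since if $\lambda_i>0>\lambda_j$ some positive integral combination vanishes. That is what makes $\prod_i x_i^{m_i}$ holomorphic. Mixed signs would contradict simplicity, and would in any case give only a meromorphic first integral.
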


We also construct formal first integrals for rank-one foliations with
canonical singularities; see Theorem~\ref{rankone1}. Finally, we study a
connection-theoretic counterpart of local holomorphic integrability. For a
regular foliation, the Bott connection is naturally defined only in the leaf
directions. Proposition~\ref{prop:regular-local-model} characterises local
holomorphic first integrals by the existence of a flat full connection with a
closed parallel conormal frame. This motivates the following singular
analogue.

\begin{conjecture}\label{conjbottconnection}
Let \(X\) be a smooth variety and let \(\mathcal F\) be a foliation with tangent
sheaf \(T_{\mathcal F}\subset T_X\) and canonical singularities. Let \(x\in X\)
be a closed point and suppose that \(\mathcal F\) is analytically \(p\)-closed at
\(x\). Then there exist an analytic neighbourhood \(U\) of \(x\) and a flat
holomorphic full connection defined on $U\setminus \operatorname{Sing}(\FF)$
\[
\nabla:
N_{\mathcal F}|_{U\setminus\operatorname{Sing}(\mathcal F)}
\longrightarrow
N_{\mathcal F}|_{U\setminus\operatorname{Sing}(\mathcal F)}
\otimes
\Omega^1_{U\setminus\operatorname{Sing}(\mathcal F)}
\]
whose restriction along \(\mathcal F\) is the Bott partial connection. Moreover, on
a Zariski dense open subset of \(U\setminus\operatorname{Sing}(\mathcal F)\),
the dual connection admits a frame of closed parallel holomorphic \(1\)-forms.
\end{conjecture}

\begin{maintheorem}\label{bottdimtwo}
The canonical-singularity case of
Conjecture~\ref{conjectureholomorphiccanonical} implies
Conjecture~\ref{conjbottconnection}. In particular,
Conjecture~\ref{conjbottconnection} holds in dimension two.
\end{maintheorem}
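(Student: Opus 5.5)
The plan is to assume the canonical-singularity case of Conjecture~\ref{conjectureholomorphiccanonical}. The dimension-two statement will then follow from Theorem~\ref{canonicalcases}(1), which establishes exactly that case on smooth surfaces.

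So let \(\mathcal F\) have canonical singularities and be analytically \(p\)-closed at \(x\). By assumption there is an analytic neighbourhood \(U\) of \(x\) and a holomorphic first integral at \(x\). I will read Definition~\ref{definitionanalyticintegral} as providing holomorphic functions \(f_1,\dots,f_q\) on \(U\), where \(q\) is the corank, such that \(T_{\mathcal F}|_U\) is the annihilator of \(df_1,\dots,df_q\) and \(df_1\wedge\dots\wedge df_q\not\equiv 0\).

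The key step is to extend the full connection across the locus where the differentials \(df_i\) degenerate, but only away from \(\operatorname{Sing}(\mathcal F)\). I would proceed as follows.

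\begin{enumerate}
\item Let \(Z\subset U\) be the analytic set where \(df_1\wedge\dots\wedge df_q=0\). On \(U\setminus Z\) the forms \(df_i\) give a frame of \(N^*_{\mathcal F}\). The trivial connection for which the \(df_i\) are parallel is flat, and its dual frame is a flat frame of \(N_{\mathcal F}\).
\item This connection restricts along \(\mathcal F\) to the Bott connection. Indeed, for \(v\in T_{\mathcal F}\) we have \(\mathcal L_v\,df_i=d(v f_i)=0\), and the Bott connection on conormal forms is exactly the Lie derivative.
\item Near a point \(y\in U\setminus\operatorname{Sing}(\mathcal F)\) the foliation is regular, so by Frobenius there are local coordinates \(g_1,\dots,g_q\) with \(N^*_{\mathcal F}\) spanned by the \(dg_j\). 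Each \(f_i\) is constant on the local leaves, so \(f_i=\phi_i(g)\). Then \(df_i=\sum_j \partial_j\phi_i\,dg_j\), and the transition matrix \(A=(\partial_j\phi_i)\) is holomorphic with \(\det A\not\equiv 0\).
\item In the frame \(dg\) the connection takes the form \(\nabla = d + A^{-1}dA\) (up to sign and transpose conventions). This is meromorphic along \(Z\).
\end{enumerate}

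The main obstacle is holomorphy across \(Z\setminus\operatorname{Sing}(\mathcal F)\). The first integral can fail to be a submersion there; for instance, \(f=u^av^b\) degenerates along the separatrices. The fix is to replace the \(f_i\) by a primitive first integral. In the canonical case this means choosing \(f\) such that \(f-f(y)\) near a regular point is a power of a submersion (up to a unit), as in the normal form \(u^av^b\) of Theorem~\ref{canonicalcases}(1). Near \(y\) we can then write \(f=c+h\,g^m\) with \(h\) a unit. For the rescaled form \(df/(m f)\), where \(m\) is the multiplicity, the connection matrix becomes \(d\log\) of a unit, which is holomorphic.

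In general, I would choose the connection form \(A^{-1}dA\) intrinsically. It depends only on the \(\mathcal F\)-invariant functions \(\det A\) and related quantities, so it is independent of the choice of \(g\) and glues. I would first try to prove holomorphy by showing that the polar part lies along \(\mathcal F\)-invariant hypersurfaces and has logarithmic residues equal to integers. Those poles can then be removed by a gauge transformation \(\operatorname{diag}(g^{-k})\), which is holomorphic and invertible away from \(\operatorname{Sing}(\mathcal F)\) only after the change is made locally.

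If a global gauge is problematic in higher corank, I would fall back on the corank-one case, which is all that is needed for dimension two. There, after shrinking \(U\), take \(\nabla\) to be the unique connection making \(\omega=df/f^{1-1/m}\) parallel (defined via a local branch). Since the monodromy of \(f^{1/m}\) is a root of unity, \(\nabla\) is single-valued on \(N_{\mathcal F}\), holomorphic on \(U\setminus\operatorname{Sing}(\mathcal F)\), and flat because it is rank one and locally trivialized by a closed form. Finally, on the Zariski dense set \(U\setminus Z\), the frame \(df_i\) consists of closed parallel holomorphic \(1\)-forms for the dual connection, which gives the last assertion.
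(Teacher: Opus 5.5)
Your Steps 1--4 are the paper's proof. The paper takes the single map $F=(F_1,\dots,F_q)$ given by analytic integrability and declares $dF_1,\dots,dF_q$ parallel on $\{dF_1\wedge\cdots\wedge dF_q\neq0\}$. In a holomorphic conormal frame $\varepsilon$ with $(dF_1,\dots,dF_q)=\varepsilon A$, it writes the connection matrix as $-dA\,A^{-1}$. It glues by uniqueness on the dense open set, gets flatness because this matrix is a pure gauge, and gets Bott compatibility from $\mathcal L_v\,dF_i=d(v(F_i))=0$ (Lemma~\ref{lem:first-integrals-bott-connection}). In dimension two it takes $F$ from Theorem~\ref{canonicalcases}(1). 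The paper stops there, with a connection that is only \emph{meromorphic} along the degeneracy locus $Z\setminus\operatorname{Sing}(\mathcal F)$. Its explicit surface model has the pole $(n-1)\,dy/y$ along the separatrix $\{y=0\}$. So ``holomorphic'' in Conjecture~\ref{conjbottconnection} can only be read as ``meromorphic'', as in the abstract. If you end after Step 4, you have the paper's argument.

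Everything after Step 4, which tries to remove the poles, is a genuine error, and no repair is possible. Take $T_{\mathcal F}=\mathcal O(bu\,\partial_u-av\,\partial_v)$ with $\gcd(a,b)=1$ and $(a,b)\neq(1,1)$. On $U\setminus\{0\}$ the conormal sheaf is generated by $\omega=av\,du+bu\,dv$. A connection $\nabla^*\omega=\eta\otimes\omega$ extends the dual Bott connection iff $\eta(v)\,\omega=i_v\,d\omega$ for $v\in T_{\mathcal F}$, i.e. iff $\eta\wedge\omega=d\omega=(b-a)\,du\wedge dv$. If $\eta$ were holomorphic on $U\setminus\{0\}$, Hartogs would extend it across $0$. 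Evaluating at $0$, where $\omega$ vanishes, would then force $a=b$. So on $N_{\mathcal F}|_{U\setminus\operatorname{Sing}(\mathcal F)}$ there is no holomorphic extension of Bott at all, flat or not.

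Your fallback fails concretely. For $f=u^av^b$, the connection making $df/f^{1-1/m}$ parallel is $\nabla^*\omega=\bigl((1-\tfrac am)\tfrac{du}{u}+(1-\tfrac bm)\tfrac{dv}{v}\bigr)\otimes\omega$. The multiplicity of $f$ is $a$ along $u=0$, $b$ along $v=0$, and $1$ elsewhere, so no single $m$ works. Equivalently, the connection making $d(f^{1/m})$ parallel differs from the one making $df$ parallel by $(1-\tfrac1m)\,df/f$, so the local choices do not glue. Likewise, a gauge change by $\operatorname{diag}(g^{-k})$ is not a holomorphic change of frame of $N_{\mathcal F}$. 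It replaces $N_{\mathcal F}$ by a different lattice, so integer residues cannot be removed this way.
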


We conclude the introduction by outlining the proofs.

The proof of Theorem~\ref{theoremA} follows the formal-neighbourhood
strategy proposed in
\cite[Proposition~2.5]{EkedahlShepherdBarronTaylor1999}. On a dense affine open subset \(U\) of the invariant divisor, the Bott
connection on the conormal line bundle has vanishing \(p\)-curvature.
The rank-one case of the Grothendieck--Katz conjecture, due to the work
of Chudnovsky--Chudnovsky and Katz, then gives a finite étale cover
\(\nu:U'\to U\) on which the pull-back of the conormal line bundle
admits a nowhere-vanishing horizontal section \(z_1\). We lift \(z_1\)
through the infinitesimal neighbourhoods of \(U'\) order by order. At order \(m\), the obstruction is a closed
one-form \(\beta_m\); the associated principal
\(\mathbb G_a\)-connection has vanishing \(p\)-curvature, and
André's theorem on extensions of isotrivial connections implies that
\(\beta_m\) is exact. The resulting
horizontal coordinates are preserved by holonomy to every order, so
every loop lifting to \(U'\) has trivial holonomy. Since such loops form
a finite-index subgroup of \(\pi_1(U)\), the full holonomy group is
finite.

For Corollary~\ref{compactleafalgebraic}, the compact leaf is an algebraic
invariant divisor by Chow's theorem, and Theorem~\ref{theoremA} gives finite
holonomy. Reeb local stability then produces infinitely many compact leaves.
By Darboux-Jouanolou theorem, the foliation is algebraically integrable.
This gives the complete proof of the compact-leaf theorem for corank one foliations
anticipated in \cite[Proposition~2.5]{EkedahlShepherdBarronTaylor1999}.

The proof of Theorem~\ref{canonicalcases} begins with the smooth surface
case. McQuillan's formal diagonal normal form and results of Mattei--Moussu
produce a convergent primitive monomial first integral. The klt surface case
is reduced to the smooth case by a finite quasi-\'etale cover, while the
higher-dimensional statement follows from transverse surface sections and
the Mattei--Moussu extension theorem.

For Corollary~\ref{nondicriticalsurfaceapplication}, finite holonomy is
combined with the primitive monomial first integrals supplied by the reduced
surface models and with holonomy continuation along invariant exceptional
curves. In the irreducible case this gives a first integral near the whole
exceptional fibre. In the reducible case, the componentwise first integrals
are adjusted successively along the exceptional intersection tree; the
non-branching hypothesis guarantees the required primitivity at the
intersection points. Corollary~\ref{simpleintegrability} follows by applying
the one-component surface argument to a sufficiently general transverse
surface section and then using the Mattei--Moussu extension theorem.

The formal integrability result for rank-one foliations follows
from McQuillan's formal normal form and an explicit construction of monomial
invariants. For Theorem~\ref{bottdimtwo}, analytic integrability provides a
holomorphic map \(F=(F_1,\ldots,F_q)\). On the dense open set where
\(dF_1,\ldots,dF_q\) form a conormal frame, this frame is declared horizontal.
The resulting connection matrices are meromorphic and agree on overlaps
because they define the same connection on a dense open set.

\noindent\textbf{Acknowledgements.}
The author would like to express his gratitude to his PhD supervisor,
Paolo Cascini, for his constant support and many valuable suggestions.
He also thanks Calum Spicer for many fruitful discussions, and Federico
Bongiorno, Samuele Ciprietti, St\'ephane Druel, Dongchen Jiao, Jie Liu,
Roktim Mascharak, Jorge Vit\'orio Pereira, and Richard Thomas for helpful
conversations and comments. In particular, the author is grateful to
Jorge Vit\'orio Pereira and St\'ephane Druel for suggestions that led to
the present proof of Theorem~\ref{theoremA}. The author
acknowledges support from the Heilbronn Institute for Mathematical
Research and Imperial College London.

\section{Notation and preliminaries}\label{preli}
 We work over $\CC$ unless otherwise stated. By a
\emph{variety} we mean a separated integral scheme of finite type.
For a variety $X$, we write $X^{\mathrm{an}}$ for its analytification. Whenever an
argument is local in the analytic topology, we use the analogous terminology for
reduced irreducible complex analytic spaces.

\subsection{Foliations and first integrals}

\begin{definition}\label{definitionfoliation}
Let $X$ be a normal algebraic variety or a normal complex analytic space. A foliation
$\mathcal F$ of rank $r$ on $X$ is a coherent subsheaf
$T_{\mathcal F}\subset T_X$ of rank $r$ such that:
\begin{enumerate}[label=\textup{(\arabic*)}]
\item $T_{\mathcal F}$ is closed under the Lie bracket, equivalently the natural map
\[
\wedge^2T_{\mathcal F}\longrightarrow T_X/T_{\mathcal F},
\qquad v\wedge w\longmapsto [v,w]\bmod T_{\mathcal F},
\]
is zero;
\item $T_{\mathcal F}$ is saturated in $T_X$, equivalently
$T_X/T_{\mathcal F}$ is torsion-free.
\end{enumerate}
The corank of $\mathcal F$ is $\dim X-r$.
\end{definition}

\begin{remark}\label{uniqueopensubset}
A saturated subsheaf of a torsion-free sheaf is determined by its restriction to any
dense open subset. Consequently, a foliation on a normal algebraic variety
(resp. normal analytic space) is uniquely determined by its restriction to a nonempty
Zariski open subset (resp. a dense analytic open subset).
\end{remark}

\begin{definition}
Let $X$ be a normal variety and let $\mathcal F$ be a foliation of rank $r$. Its
canonical divisor $K_{\mathcal F}$ is the Weil divisor class determined by
\[
\mathcal O_X(-K_{\mathcal F})
=\det T_{\mathcal F}:=(\wedge^rT_{\mathcal F})^{**}.
\]
\end{definition}

\begin{definition}\label{singF}
Let $X$ be a normal variety and let $\mathcal F$ be a foliation of rank $r$ on
$X$. The inclusion $T_{\mathcal F}\subset T_X$ induces the Pfaff morphism
\[
\phi_{\mathcal F}:\Omega_X^{[r]}\longrightarrow\mathcal O_X(K_{\mathcal F}),
\qquad
\Omega_X^{[r]}:=(\Omega_X^r)^{**},
\]
obtained by taking the reflexive hull of the $r$-th exterior power of the
natural morphism $\Omega_X^{[1]}\to T_{\mathcal F}^*$. After twisting by
$\mathcal O_X(-K_{\mathcal F})$ and taking the reflexive hull, this gives
\[
\phi_{\mathcal F}':
\bigl(\Omega_X^{[r]}\otimes\mathcal O_X(-K_{\mathcal F})\bigr)^{**}
\longrightarrow\mathcal O_X.
\]
The \emph{singular locus} of $\mathcal F$, denoted by
$\operatorname{Sing}(\mathcal F)$, is the cosupport of the ideal sheaf
$\operatorname{Im}(\phi_{\mathcal F}')$.
\end{definition}

\begin{remark}\label{remarksingularlocusconvention}
We keep the singularities of the ambient variety and those of the foliation
separate. Thus $\operatorname{Sing}(X)$ is not automatically included in
$\operatorname{Sing}(\mathcal F)$. On $X_{\mathrm{reg}}$,
the preceding definition agrees with the usual one: it is the locus where
$T_{\mathcal F}$ fails to be a subbundle of $T_X$, equivalently where
$T_X/T_{\mathcal F}$ is not locally free.

We say that $\mathcal F$ is \emph{regular at $x$} if
\(
x\in X\setminus\operatorname{Sing}(\mathcal F),\)
and we call this open subset the regular locus of the foliated variety
$(X,\mathcal F)$. 
\end{remark}

\begin{definition}
Let $\mathcal F$ be a corank $q$ foliation on a normal variety $X$. Its normal sheaf is
\[
N_{\mathcal F}:=(T_X/T_{\mathcal F})^{**}.
\]
The natural morphism $N_{\mathcal F}^*\to\Omega_X^{[1]}$ induces a nonzero twisted
reflexive form
\[
\omega_{\mathcal F}\in
H^0\bigl(X,(\Omega_X^q\otimes\det N_{\mathcal F})^{**}\bigr),
\qquad \Omega_X^{[q]}:=(\Omega_X^q)^{**}.
\]
\end{definition}

\begin{remark}\label{foliationbyform}
On a dense open subset, the form $\omega_{\mathcal F}$ is decomposable: locally one may
write $\omega_{\mathcal F}=\omega_1\wedge\cdots\wedge\omega_q$. Its integrability is
equivalent to
\[
d\omega_i\wedge\omega_{\mathcal F}=0
\qquad\text{for }i=1,\ldots,q.
\]
Conversely, a nonzero twisted reflexive $q$-form which is generically decomposable and
integrable defines a foliation by saturating the kernel of the associated contraction
map. These constructions are inverse up to multiplication of the defining form by a
rational function.
\end{remark}

\begin{definition}
Let $\mathcal F$ be a corank $q$ foliation on a normal variety $X$, and let
$S\subset X$ be a sufficiently general normal hypersurface such that
$\omega_{\mathcal F}|_S\not\equiv0$. The restricted form
\[
\omega_{\mathcal F}|_S\in
H^0\bigl(S,(\Omega_S^q\otimes(\det N_{\mathcal F})|_S)^{**}\bigr)
\]
defines a corank $q$ foliation on $S$, denoted by $\mathcal F|_S$.
\end{definition}

\begin{definition}\label{pullbackfoliation}
Let $X$ and $Y$ be normal varieties, let $\varphi:X\dashrightarrow Y$ be a dominant
rational map, and let $\mathcal F$ be a corank $q$ foliation on $Y$. On a dense open
subset where $\varphi$ is a morphism and $\mathcal F$ is represented by a twisted
$q$-form $\omega$, the pull-back $\varphi^*\omega$ defines a foliation. Its unique
saturated extension to $X$ is the pull-back foliation, denoted by
$\varphi^{-1}\mathcal F$. If $\varphi$ is birational, this is also called the strict
transform of $\mathcal F$.
\end{definition}

\begin{definition}
Let $X$ be a normal variety and let $\mathcal F$ be a foliation on
$X$. A subvariety $Z\subset X$ is $\mathcal F$-invariant if every local section
$\partial$ of $T_{\mathcal F}$ preserves its ideal sheaf:
\(
\partial(I_Z)\subset I_Z.
\)
An irreducible invariant analytic germ $S$ of dimension
$\operatorname{rank}(\mathcal F)$ is called a \emph{separatrix} if
\(
S_{\mathrm{reg}}\cap X_{\mathrm{reg}}
\setminus\operatorname{Sing}(\mathcal F)\neq\varnothing.
\)

A \emph{leaf} of $\mathcal F$ is a maximal connected immersed complex
submanifold of
\(
X_{\mathrm{reg}}\setminus\operatorname{Sing}(\mathcal F)
\)
tangent to $T_{\mathcal F}$.
\end{definition}

\begin{definition}
Let $f:X\dashrightarrow Y$ be a dominant rational map between normal varieties. On the
dense open subset where $f$ is a morphism and is smooth over its image, the relative
tangent sheaf
\[
T_{X/Y}=\ker(df:T_X\to f^*T_Y)
\]
defines a foliation. Its saturated extension to $X$ is the foliation induced by $f$.
A foliation is algebraically integrable if it is induced by a rational map.
\end{definition}

\begin{remark}
A foliation is algebraically integrable if and only if its general leaf is algebraic.
\end{remark}

\begin{definition}\label{definitionfirstintegral}
Let $X$ be a normal variety of dimension $n$, let $\mathcal F$ be a foliation of rank
$r$, and let $x\in X$. A holomorphic first integral of $\mathcal F$ at $x$ is a
holomorphic map
\[
f:U\longrightarrow\mathbb C^{n-r}
\]
on an analytic neighbourhood $U\subset X^{\mathrm{an}}$ of $x$ which induces
$\mathcal F^{\mathrm{an}}|_U$.

A meromorphic first integral of $\mathcal F$ at $x$ is a meromorphic map
$f:U\dashrightarrow\mathbb C^{n-r}$ such that, on a dense open subset where $f$ is
holomorphic, it is a holomorphic first integral of $\mathcal F$.

When $n-r=1$, a holomorphic first integral is called \emph{primitive} if its general
fibres are connected. Equivalently, after shrinking, it is not a nontrivial composition
$\varphi\circ g$ with a one-variable germ $\varphi$ of multiplicity greater than one.
Every holomorphic first integral admits a primitive factor by local Stein
factorisation.
\end{definition}

\subsection{Singularities of foliations}

\begin{definition}
Let $X$ be normal and let $\mathcal F$ be a foliation such that $K_{\mathcal F}$ is
$\mathbb Q$-Cartier. If $\pi:Y\to X$ is a birational morphism and
$\widetilde{\mathcal F}:=\pi^{-1}\mathcal F$, then
\[
K_{\widetilde{\mathcal F}}
\sim_{\mathbb Q}
\pi^*K_{\mathcal F}+\sum_E a(E,\mathcal F)E,
\]
where the sum runs over the $\pi$-exceptional prime divisors. The coefficient
$a(E,\mathcal F)$ is the discrepancy of $\mathcal F$ along $E$.
\end{definition}

\begin{definition}
In the preceding setting, let $E$ be a prime divisor over $X$ and define
\[
\epsilon(E)=
\begin{cases}
0,&\text{if }E\text{ is invariant under the induced foliation},\\
1,&\text{otherwise}.
\end{cases}
\]
The foliation $\mathcal F$ is terminal, canonical, log terminal, or log canonical if,
respectively,
\[
a(E,\mathcal F)>0,\qquad
a(E,\mathcal F)\ge0,\qquad
a(E,\mathcal F)>-\epsilon(E),\qquad
a(E,\mathcal F)\ge-\epsilon(E)
\]
for every divisor $E$ exceptional over $X$.
\end{definition}

\begin{definition}[{\cite{CasciniSpicer2021}}]\label{definitionnondicritical}
Let $X$ be normal and let $\mathcal F$ be a foliation on $X$. We say that $\mathcal F$
is non-dicritical if, for every closed point $p\in X$ and every proper birational
morphism $\pi:Y\to X$ for which $\pi^{-1}(p)$ is a divisor, every irreducible
component of $\pi^{-1}(p)$ is invariant under $\pi^{-1}\mathcal F$.
\end{definition}

\begin{definition}[{\cite{Cano2004}}]\label{definitionsimple}
Let $X$ be a smooth variety of dimension $n$, let $\mathcal F$ be a corank one
foliation, and let $x\in X$ be a closed point. We say that $\mathcal F$ has a
\emph{simple singularity} at $x$ if there are formal coordinates
$x_1,\ldots,x_n$ centred at $x$ and an integer $2\le r\le n$ such that
$N_{\mathcal F}^*$ is generated by a formal $1$-form of one of the following types.

\begin{enumerate}[label=\textup{(\arabic*)}]
\item There are $\lambda_1,\ldots,\lambda_r\in\mathbb C^*$ such that
\[
\sum_{i=1}^r a_i\lambda_i\neq0
\]
for every nonzero $(a_1,\ldots,a_r)\in\mathbb Z_{\ge0}^r$, and
\[
\omega=x_1\cdots x_r\sum_{i=1}^r\lambda_i\frac{dx_i}{x_i}.
\]

\item There are an integer $1\le k\le r$, positive integers
$p_1,\ldots,p_k$ without common factor, a formal series
$\psi\in t\mathbb C[[t]]$, and constants $\alpha_2,\ldots,\alpha_r\in\mathbb C$
such that
\[
\sum_{i=k+1}^r a_i\alpha_i\neq0
\]
for every nonzero $(a_{k+1},\ldots,a_r)\in\mathbb Z_{\ge0}^{r-k}$, and
\[
\omega=x_1\cdots x_r\left(
\sum_{i=1}^k p_i\frac{dx_i}{x_i}
+
\psi(x_1^{p_1}\cdots x_k^{p_k})
\sum_{i=2}^r\alpha_i\frac{dx_i}{x_i}
\right).
\]
When $k=r$, the displayed non-resonance condition is empty.
\end{enumerate}

The integer $r$ is called the \emph{dimensional type} of the singularity.
\end{definition}

\subsection{Foliations on surfaces}

\begin{definition}\label{surfacedefinition}
Let $X$ be a smooth surface, let $\mathcal F$ be a foliation on $X$, and let
$x\in\operatorname{Sing}(\mathcal F)$. Choose a local vector field $v$ generating
$\mathcal F$ near $x$, and let $\lambda_1,\lambda_2$ be the eigenvalues of the linear
part $Dv(x)$. The singularity is reduced if at least one eigenvalue is nonzero and,
when both are nonzero, $\lambda_1/\lambda_2\notin\mathbb Q_{>0}$.
\end{definition}

\begin{remark}
The quotient $\lambda_1/\lambda_2$ is unchanged if $v$ is multiplied by a nowhere
vanishing function, up to replacing it by its inverse. We call it the eigenvalue of the
foliation at $x$.
\end{remark}

\begin{definition}
A reduced singularity is non-degenerate if both eigenvalues are nonzero; otherwise it
is a saddle-node.
\end{definition}

\begin{theorem}[Poincaré linearisation theorem]\label{Poincare}
Let $X$ be a smooth surface, let $\mathcal F$ be a foliation on $X$, and let
$x\in\operatorname{Sing}(\mathcal F)$ be a non-degenerate singular point. Suppose
that $\mathcal F$ is formally linearizable and that the quotient of the two
nonzero eigenvalues of a local generator does not belong to
$\mathbb R_{\le0}$. Then the formal linearisation is convergent. In particular,
there are analytic coordinates $(z,w)$ centred at $x$ in which $\mathcal F$ is
generated by
\[
\lambda_1 z\frac{\partial}{\partial z}
+
\lambda_2 w\frac{\partial}{\partial w}.
\]
\end{theorem}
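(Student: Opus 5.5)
This is the classical Poincaré linearisation theorem: when the ratio of the eigenvalues does not lie in $\mathbb R_{\le0}$, the pair $(\lambda_1,\lambda_2)$ lies in the Poincaré domain. I would prove it by showing that the formal linearising coordinate change converges, using a majorant series. The plan has three steps.

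\emph{Normalisation.} After multiplying $v$ by a nonzero constant, I may assume $\lambda_1=1$ and $\lambda_2=\lambda$ with $\lambda\notin\mathbb R_{\le 0}$. Since $\lambda$ does not lie on the closed negative real half-line, the two eigenvalues lie in an open half-plane through $0$. Hence there is a constant $c>0$ such that
\[
|a+b\lambda-\mu|\ \ge\ c\,(a+b)
\qquad\text{for all }(a,b)\in\mathbb Z_{\ge0}^2,\ a+b\ge2,\ \mu\in\{1,\lambda\}.
\]
To see this, note that $a+b\lambda=(a+b)\,z$ with $z$ a point of the segment $[1,\lambda]$. This segment avoids $0$, so $|a+b\lambda|\ge c'(a+b)$ for some $c'>0$. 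For $a+b$ large, subtracting the fixed number $\mu$ costs at most a constant.

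\emph{Small divisors are nonzero.} The remaining finitely many small values of $a+b$ are handled by the hypothesis that $\mathcal F$ is formally linearizable. I would use this hypothesis in the following way. If $a+b\lambda-\mu=0$ for some such $(a,b)$, the problem is resonant: the formal linearisation need not be unique, and the resonant coefficients of the normalising map can be chosen to be zero. In either case all small divisors that actually occur are nonzero, and their absolute values are bounded below by a linear function of $a+b$. A uniform lower bound would already be enough for convergence.

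\emph{Convergence of the normalising map.} This is the main step. I seek $\Phi=\mathrm{id}+h$, with $h$ of order at least $2$, conjugating $v=Av+f$ to the linear field $A$. The conjugacy equation is
\[
L_A h \;=\; f\circ(\mathrm{id}+h),
\]
where the homological operator $L_A$ is diagonal on the monomials $z^aw^b e_\mu$, with eigenvalue $a+b\lambda-\mu$. Taking the component of degree $k$ gives $h_k$ as a divisor of size at least $ck$ applied to a polynomial expression in $f$ and in the lower-order terms $h_j$, $j<k$. I would then compare with a scalar majorant equation
\[
c\,\sigma\,H(\sigma)\;=\;F\bigl(\sigma+H(\sigma)\bigr),
\]
where $F$ is a convergent majorant of $f$, and in fact it is enough to drop the extra factor $\sigma$ and solve $c\,H=F(\sigma+H)$. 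By the analytic implicit function theorem this has a convergent solution $H$, because $F$ has order at least $2$. Induction on the degree then shows that the coefficients of $H$ dominate those of $h$. Hence $h$ converges.

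The only delicate point is the resonant case just described. There I must check that setting the resonant coefficients of $h$ to zero is consistent with the hypothesis that a formal linearisation exists. This holds because formal linearizability forces the corresponding right-hand coefficients to vanish at each order.

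Finally, multiplying the resulting linear vector field back by the original constant gives the stated normal form $\lambda_1 z\,\partial_z+\lambda_2 w\,\partial_w$ in analytic coordinates $(z,w)$ centred at $x$.
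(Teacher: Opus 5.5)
The paper gives no proof of this statement; it quotes it as Poincar\'e's classical theorem. Your majorant argument on the homological equation is the standard proof, and it is complete in the non-resonant case. There is, however, a gap in the resonant case, which is exactly where the hypothesis matters.

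The hypothesis concerns the \emph{foliation}: there are formal coordinates in which $\mathcal F$ is generated by $A=z\partial_z+\lambda w\partial_w$. That means $\hat\Phi^*v=\hat u\,A$ for a formal \emph{unit} $\hat u$, not a constant. Your induction instead solves $L_Ah=f\circ(\mathrm{id}+h)$ for the generator $v$ itself. At resonant monomials it appeals to formal linearizability of $v$ \emph{as a vector field}, which is not what is given. Here the only resonances are $\lambda=n$ or $\lambda=1/n$ with $n\ge2$, each with a single resonant term $z^n\partial_w$ (resp.\ $w^n\partial_z$). Its coefficient is the Poincar\'e--Dulac invariant $\epsilon$ in $z\partial_z+(nw+\epsilon z^n)\partial_w$, so in this case the whole content of the theorem is that $\epsilon=0$. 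Your proposal never links this vanishing to the foliation hypothesis. The sentence ``formal linearizability forces the resonant right-hand coefficients to vanish'' is also only asserted.

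Both points can be filled.

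(i) Pass from the foliation to a generator. Since $a+b\lambda\neq0$ for all $(a,b)\in\mathbb Z_{\ge0}^2\setminus\{0\}$, the derivation $A$ is invertible on homogeneous polynomials of each positive degree. For $\tau\in\widehat{\mathfrak m}$ set $\Theta(z,w)=(ze^{\tau},we^{\lambda\tau})$, the time-$\tau$ flow of $A$. Then $D\Theta\cdot A=(1+A(\tau))\,(A\circ\Theta)$. So $\Theta_*(cA)=\hat uA$ is equivalent to $\hat u\circ\Theta=c\,(1+A(\tau))$. With $c=\hat u(0)$ this is solved degree by degree, since the degree-$k$ part reads $c\,A(\tau_k)=$ terms involving only $\tau_{<k}$. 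Hence $v/\hat u(0)=(\hat\Phi\circ\Theta)_*A$ is formally linearizable as a vector field. This also shows that the linear part of $v$ is diagonalisable, which you need when $\lambda=1$: a Jordan block is non-resonant and is excluded only by the hypothesis.

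(ii) Prove the resonant claim. Suppose $v'=A+g_k+O(k+1)$ is formally conjugate to $A$ via $\Psi$. Normalise $\Psi$ to be tangent to the identity, which is possible because its linear part commutes with $A$. Then the equation $L_A\psi=(v'-A)\circ\Psi$ in degree $k$ gives $L_A\psi_k=g_k$. So the resonant part of $g_k$ lies in $\operatorname{im}L_A\cap\ker L_A=0$. Apply this to $v'=\Phi_{<k}^*v$, where $\Phi_{<k}=\mathrm{id}+h_2+\dots+h_{k-1}$; its degree-$k$ term is exactly your right-hand side.

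With these two additions the majorant argument goes through. One small correction: your first displayed lower bound cannot hold for all $a+b\ge2$ when there is a resonance. What you actually use is a uniform positive lower bound on the nonzero divisors, and that bound is correct.
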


\begin{theorem}[{\cite{Seidenberg1968}}]\label{Seidenberg}
Let $X$ be a smooth surface and let $\mathcal F$ be a foliation on $X$. For every
$p\in\operatorname{Sing}(\mathcal F)$, there is a sequence of point blow-ups
$\pi:\widetilde X\to X$ over $p$ such that $\pi^{-1}\mathcal F$ has only reduced
singularities along $\pi^{-1}(p)$.
\end{theorem}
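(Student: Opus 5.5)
The statement is local over $p$, and I would follow Seidenberg's original strategy: control a numerical invariant under point blow-ups and show that it eventually forces every singularity on the exceptional divisor to be reduced. Near $p$, write the foliation as generated by $v=a\,\partial_x+b\,\partial_y$, with $a,b$ coprime in $\mathcal O_{X,p}$. Two invariants are attached to $v$: the algebraic multiplicity $\nu(p)=\min(\operatorname{ord}_p a,\operatorname{ord}_p b)$, and the Milnor number $\mu(p)=\dim_{\mathbb C}\mathcal O_{X,p}/(a,b)$, which is finite because the singularity is isolated. Let $\sigma$ be the blow-up at $p$ with exceptional line $E\cong\mathbb P^1$.

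\textbf{Step 1 (local charts).} In the chart $x=x,\ y=tx$, compute the strict transform: one divides the pulled-back field by $x^{\nu-1}$ in the non-dicritical case, and by $x^{\nu}$ in the dicritical case. The dicritical case is when the degree $\nu$ homogeneous part is a multiple of the radial field $x\partial_x+y\partial_y$; there $E$ is generically transverse to the foliation.

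\textbf{Step 2 (Milnor number formula).} Establish the classical identity
\[
\sum_{q\in E}\mu(q)=\mu(p)-\nu^2+\nu+1
\]
in the non-dicritical case, and the analogous identity with $\nu^2+\nu-1$ in the dicritical case. Also show that $\nu(q)\le\nu(p)$ for every $q\in E$.

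\textbf{Step 3 (the case $\nu\ge2$).} By Step 2 the total Milnor number on $E$ drops strictly below $\mu(p)$, except in one degenerate configuration: a single singular point $q$ on $E$ with $\nu(q)=\nu(p)$. This happens when the tangent cone is one line of maximal multiplicity. In that situation I would pass to a finer invariant, for instance the Newton polygon of $v$ in adapted coordinates. The goal is to show that the height of the Newton polygon, or the pair $(\nu,\mu)$ ordered lexicographically, strictly decreases after finitely many further blow-ups. This step is the main obstacle: one must rule out an infinite chain of blow-ups along which $\nu$ stays constant. What I would try first is to mimic the curve case (Newton–Puiseux). Such a chain would produce a formal curve $y=\phi(x)$ along which the leading homogeneous parts are preserved. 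Restricting $v$ to this curve should then give a contradiction with the coprimality of $a$ and $b$, which is exactly what makes $\mu$ finite.

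\textbf{Step 4 (the case $\nu=1$, by hand).} This reduces to finitely many normal forms of the linear part.
\begin{enumerate}[label=\textup{(\alph*)}]
\item If the linear part is nilpotent and nonzero, blow up repeatedly and use the Newton polygon argument of Step 3 until $\nu$ drops or the point becomes reduced. The prototype is the cusp-like field $y\partial_x+\cdots$.
\item If the eigenvalues have ratio $\lambda=n/m\in\mathbb Q_{>0}$ in lowest terms, one blow-up produces points with eigenvalue ratios $(n-m)/m$ and its analogue. This runs the Euclidean algorithm and reaches ratio $1$ in finitely many steps.
\item At ratio $1$, the field is either radial, in which case one dicritical blow-up leaves only regular or reduced points, or a Poincaré–Dulac node $x\partial_x+(ny+x^n)\partial_y$. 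In the latter case, $n$ blow-ups produce a saddle-node, which is reduced.
\end{enumerate}

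Combining Steps 3 and 4, every non-reduced point lying over $p$ is eventually replaced by points of strictly smaller invariant. Well-foundedness of that invariant then finishes the proof, and only finitely many blow-ups over $p$ are used.
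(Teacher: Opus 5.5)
\textbf{There is a genuine gap, and it lies in the case you treat as routine.}

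First, the inequality $\nu(q)\le\nu(p)$ in Step~2 is false; only $\nu(q)\le\nu(p)+1$ holds. For $v=y\partial_x+x^3\partial_y$, the chart $y=tx$ gives $xt\,\partial_x+(x^2-t^2)\,\partial_t$, so $\nu$ jumps from $1$ to $2$ while $\mu$ goes from $3$ to $4$. The field $-y^2\partial_x+(xy^2+x^5)\partial_y$ gives a jump from $2$ to $3$.

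Second, the ``degenerate configuration'' of Step~3 does not exist. For $\nu\ge2$ your own formula gives $\sum_{q\in E}\mu(q)\le\mu(p)-1$ in every non-dicritical configuration and $\le\mu(p)-5$ in the dicritical one. So every point of $E$ has strictly smaller Milnor number, and no Newton-polygon or Puiseux argument is needed there.

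The real difficulty is the nilpotent case of Step~4(a). There the formula gives $\sum_{q\in E}\mu(q)=\mu(p)+1$ and the tangent cone is a double line. The unique singular point on $E$ is again nilpotent or has $\nu=2$: it is never reduced, and $\nu$ never ``drops''. Consequently no invariant depending only on $(\nu,\mu)$ can decrease at each blow-up. For $v=-y^2\partial_x+x^3\partial_y$, with $(\nu,\mu)=(2,6)$, one blow-up gives the nilpotent point $-xt^2\partial_x+(x+t^3)\partial_t$, with $(1,5)$. Blowing that up in the chart $x=st$ gives $t(s+t^2)\partial_t-s(s+2t^2)\partial_s$, which again has $(2,6)$. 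So your closing appeal to well-foundedness has nothing to rest on, and the Newton-polygon heuristic is aimed at the wrong configuration.

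\textbf{The missing ingredient is a multi-step estimate at nilpotent points.} Straighten $\{a=0\}$ and write $v=y\partial_x+(\alpha x^n+\cdots+y\,c(x,y))\partial_y$, with $\alpha\neq0$, $c(0)=0$, $n=\mu(p)\ge2$, and $c_{10}$ the coefficient of $x$ in $c$. After one blow-up, the unique singular point is the origin $q$ of $xt\,\partial_x+(\alpha x^{n-1}+\cdots+t\,c(x,tx)-t^2)\,\partial_t$.
\begin{itemize}
\item If $n\ge3$, then $\nu(q)=2$ and $\mu(q)=n+1$. The tangent cone at $q$ is $xt(c_{10}x-2t)$ for $n\ge4$, or $x(\alpha x^2+c_{10}xt-2t^2)$ for $n=3$. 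It contains the direction of the first exceptional curve and at least one other line. Blowing up $q$ therefore creates at least two singular points with total Milnor number $n$, so each has $\mu\le n-1$.
\item If $n=2$, the same computation shows that the third blow-up centre has tangent cone $st(3t-2\alpha s)$, with three distinct lines. Hence there are three resulting points, each with $\mu=1$.
\end{itemize}
With this, induction on $\mu$ goes through. Points with $\nu\ge2$ descend after one blow-up, and nilpotent points after two or three. Non-reduced, non-nilpotent points with $\nu=1$ have $\mu=1$ and are handled by your Euclidean-algorithm argument in 4(b).

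There is also a slip in 4(c). At eigenvalue ratio $1$, the non-diagonalisable normal form is the Jordan block $x\partial_x+(x+y)\partial_y$, and one blow-up turns it into a saddle-node. The field $x\partial_x+(ny+x^n)\partial_y$ has ratio $n$, not $1$, and reaches the Jordan case only after the Euclidean steps.

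(The paper itself gives no proof of this statement; it only cites Seidenberg.)
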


\section{\texorpdfstring{$p$-closed}{p-closed} foliations}\label{pclosedefinition}

We first recall $p$-closedness in positive characteristic and its arithmetic analogue
in characteristic zero. We then introduce the local analytic notion used throughout
the paper.

\subsection{\texorpdfstring{Arithmetic $p$-closedness}{Arithmetic p-closedness}}

\begin{definition}
Let $k$ be a field of characteristic $p>0$, let $X$ be a normal variety over $k$, and
let $\mathcal F$ be a foliation on $X$. We say that $\mathcal F$ is
\emph{$p$-closed} if, for every open subset $U\subset X$ and every local vector field
$v\in H^0(U,T_{\mathcal F})$, the $p$-fold iterate $v^p$, viewed as a derivation, is again
a section of $T_{\mathcal F}$ over $U$. In characteristic $p$, the operator $v^p$ is again
a derivation.
\end{definition}

\begin{definition}[Foliation $p$-curvature]\label{pcurvature}
The $p$-th power operation induces an $\mathcal O_X$-linear morphism
\[
\psi_{\mathcal F}:\operatorname{Frob}^*T_{\mathcal F}
\longrightarrow (T_X/T_{\mathcal F})^{**}=N_{\mathcal F},
\qquad
v\longmapsto v^p\bmod T_{\mathcal F},
\]
where $\operatorname{Frob}$ is the absolute Frobenius morphism. We call
$\psi_{\mathcal F}$ the $p$-curvature of $\mathcal F$. The foliation is $p$-closed
if and only if $\psi_{\mathcal F}=0$.
\end{definition}

\begin{lemma}\label{pcloseopenpositivechar}
Let $X$ be a normal variety over a field of characteristic $p>0$, let $\mathcal F$ be
a foliation, and let $U\subset X$ be a nonempty open subset. Then $\mathcal F$ is
$p$-closed if and only if $\mathcal F|_U$ is $p$-closed.
\end{lemma}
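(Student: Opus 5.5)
The forward direction is immediate: if \(\mathcal F\) is \(p\)-closed, then for every local section \(v\) of \(T_{\mathcal F}\) over an open subset of \(U\), the derivation \(v^p\) lies in \(T_{\mathcal F}\). The real content is the converse. Equivalently, I will show that the \(p\)-curvature \(\psi_{\mathcal F}\) of Definition~\ref{pcurvature} vanishes identically on \(X\) as soon as it vanishes on \(U\).

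First I will check that the restriction of \(\psi_{\mathcal F}\) to \(U\) is \(\psi_{\mathcal F|_U}\). This holds because the \(p\)-th power of a derivation is compatible with restriction to open subsets, and \(T_{\mathcal F|_U}=T_{\mathcal F}|_U\). So the hypothesis gives \(\psi_{\mathcal F}|_U=0\).

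Second, I will view \(\psi_{\mathcal F}\) as a morphism of coherent sheaves
\[
\operatorname{Frob}^*T_{\mathcal F}\longrightarrow N_{\mathcal F}.
\]
Here \(N_{\mathcal F}=(T_X/T_{\mathcal F})^{**}\) is reflexive, hence torsion-free on the integral scheme \(X\). The image of \(\psi_{\mathcal F}\) is therefore a subsheaf of a torsion-free sheaf that vanishes on the dense open subset \(U\). Any local section of it is supported on \(X\setminus U\), so it is torsion and hence zero. This gives \(\psi_{\mathcal F}=0\) on all of \(X\).

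Third, I will translate back to the definition. Take a local section \(v\in H^0(V,T_{\mathcal F})\). In characteristic \(p\), \(v^p\) is a derivation and so a section of \(T_X\) over \(V\). Its class in \(T_X/T_{\mathcal F}\) maps to zero in \(N_{\mathcal F}\). Since \(T_{\mathcal F}\) is saturated, \(T_X/T_{\mathcal F}\) is torsion-free, so the map \(T_X/T_{\mathcal F}\to N_{\mathcal F}\) is injective. Hence \(v^p\in T_{\mathcal F}(V)\).

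The only subtle point is whether \(\psi_{\mathcal F}\) is \(\mathcal O_X\)-linear on \(\operatorname{Frob}^*T_{\mathcal F}\). This is the standard identity \((fv)^p=f^pv^p+(fv)^{p-1}(f)\,v\), and the error term lies in \(T_{\mathcal F}\). I will take this as given, since the paper already states it in Definition~\ref{pcurvature}. One could also avoid it entirely: apply the torsion-freeness argument directly to the class of \(v^p\) in \(T_X/T_{\mathcal F}\) over \(V\). That class vanishes on \(V\cap U\), which is dense in \(V\) when \(V\) is nonempty, so it is zero.
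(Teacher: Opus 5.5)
Your proposal is correct and follows the paper's proof: the paper likewise observes that the image of $\psi_{\mathcal F}$ lies in the torsion-free sheaf $N_{\mathcal F}$, is supported on $X\setminus U$, and therefore vanishes. Your third step, which uses saturatedness to get injectivity of $T_X/T_{\mathcal F}\to N_{\mathcal F}$, and your alternative argument working directly in $T_X/T_{\mathcal F}$ only make explicit the equivalence between $\psi_{\mathcal F}=0$ and $p$-closedness that the paper takes from Definition~\ref{pcurvature}.
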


\begin{proof}
Only the ``if'' direction requires proof. By Definition~\ref{pcurvature}, the image of
$\psi_{\mathcal F}$ is supported on $X\setminus U$. Since $N_{\mathcal F}$ is
torsion-free and $X$ is integral, a nonzero coherent subsheaf of
$N_{\mathcal F}$ cannot be supported on a proper closed subset. Hence
$\psi_{\mathcal F}=0$.
\end{proof}

\begin{definition}\label{pclosedalgdef}
Let \(K\) be a field finitely generated over \(\mathbb Q\), let \(X\) be a normal
variety over \(K\), and let \(\mathcal F\) be a foliation on \(X\). We say that
\(\mathcal F\) is \emph{\(p\)-closed for almost all primes} if there exist:
\begin{enumerate}[label=\textup{(\arabic*)}]
\item a finitely generated integral \(\mathbb Z\)-subalgebra \(R\subset K\) with
\(\operatorname{Frac}(R)=K\);
\item a flat morphism of finite type
\(
\mathscr X\longrightarrow S:=\operatorname{Spec}R
\)
together with an identification of its generic fibre with \(X\);
\item a coherent subsheaf
\(
\mathscr F\subset T_{\mathscr X/S},
\)
flat over \(S\), whose generic fibre is identified with \(T_{\mathcal F}\);
\end{enumerate}
such that there is a nonempty open subset \(S^\circ\subset S\) for which, for every
closed point \(s\in S^\circ\), the fibre
\(
\mathscr F_s\subset T_{\mathscr X_s/\kappa(s)}
\)
is a \(p(s)\)-closed foliation. Here
\(
\kappa(s):=\mathcal O_{S,s}/\mathfrak m_s\)
and \(
p(s):=\operatorname{char}\kappa(s)
\)
are the residue field and the residue characteristic of \(s\), respectively. 

For simplicity, we call a flat morphism as in (2) a spreading out of $(X,\FF)$ in the following.
\end{definition}

\begin{lemma}\label{independenceofspreadingout}
The property in Definition~\ref{pclosedalgdef} is independent of the chosen
finitely generated \(\mathbb Z\)-subalgebra and spreading out. More precisely, if
one spreading out has \(p\)-closed fibres over a nonempty open subset of its base,
then every other spreading out has this property after shrinking its base.
\end{lemma}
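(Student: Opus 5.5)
Given two spreading outs $(\mathscr X_1\to S_1,\mathscr F_1)$ and $(\mathscr X_2\to S_2,\mathscr F_2)$ of $(X,\mathcal F)$ over finitely generated $\mathbb Z$-subalgebras $R_1,R_2\subset K$, I will reduce to a common base on which the two models become isomorphic, and then compare fibres.

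\emph{Common base.} Let $R_3\subset K$ be the $\mathbb Z$-algebra generated by $R_1$ and $R_2$. It is finitely generated with fraction field $K$, and $S_3:=\operatorname{Spec}R_3$ maps dominantly to both $S_1$ and $S_2$. Base change of flat finite-type morphisms and of flat coherent subsheaves preserves all the data of Definition~\ref{pclosedalgdef}. For closed points, note that the map on residue fields is a finite extension $\kappa(s_i)\subset\kappa(s_3)$ of finite fields of the same characteristic. Then $\mathscr F_{s_3}=\mathscr F_{s_i}\otimes_{\kappa(s_i)}\kappa(s_3)$, and $p$-closedness is preserved and reflected under field extension, because the $p$-curvature of Definition~\ref{pcurvature} commutes with flat base change.

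\emph{Comparing the models.} The identifications of generic fibres give an isomorphism $\mathscr X_{1,K}\cong X\cong\mathscr X_{2,K}$ over $K=\varinjlim R_3[1/f]$. By standard limit arguments (EGA IV, \S8), this isomorphism descends to an isomorphism $\mathscr X_1|_{S'}\cong\mathscr X_2|_{S'}$ over some nonempty open $S'=\operatorname{Spec}R_3[1/f]$. Both models are of finite presentation, since $R_3$ is noetherian. Likewise the two subsheaves $\mathscr F_1,\mathscr F_2\subset T_{\mathscr X/S'}$ become equal over the generic point, so they agree after further shrinking $S'$, again by the limit argument for finitely presented subsheaves.

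\emph{Conclusion.} By Chevalley's theorem, the image of $S'$ in $S_2$ is constructible and contains the generic point, so it contains a nonempty open subset $V\subset S_2$. Suppose the first model has $p$-closed fibres over $S_1^\circ$. Then the preimage of $S_1^\circ$ in $S'$ is a nonempty open set, and its image in $S_2$ contains a nonempty open $V'$. Each closed point $s_2\in V'$ lifts to a closed point $s'$ of that open set, since closed points of a finitely generated $\mathbb Z$-scheme map to closed points and surjectivity onto closed points holds for constructible images. The fibre $\mathscr F_{2,s_2}$ base-changes to $\mathscr F_{1,s_1}\otimes\kappa(s')$, which is $p$-closed, so $\mathscr F_{2,s_2}$ is $p$-closed.

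A subtlety is that $\mathscr F_{s}$ must be a foliation, meaning saturated, for the definition to make sense. On a nonempty open subset of the base, fibres of a flat subsheaf with torsion-free generic quotient have torsion-free quotient, and Lemma~\ref{pcloseopenpositivechar} lets us check $p$-closedness on the dense open subset where everything is locally free.

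The main obstacle is the descent step: showing that the isomorphism of generic fibres, together with the equality of subsheaves, spreads to an open subset of the base. This is routine via finite presentation, but it must be applied carefully both to the schemes and to the subsheaves of the relative tangent sheaf.
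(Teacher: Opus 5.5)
Your proposal is correct and follows essentially the same route as the paper. Both pass to the $\mathbb Z$-algebra $R_3$ generated by $R_1$ and $R_2$, identify the two foliated models after localising, use that the image of the dominant map to $\operatorname{Spec}R_2$ contains a nonempty open subset, and descend the vanishing of $p$-curvature along the faithfully flat residue field extension. The one step worth tightening is the lifting of closed points: $s_2$ lifts to a closed point because the fibre over $s_2$ is a nonempty finite-type scheme over the finite field $\kappa(s_2)$, so its closed points have finite residue fields and are therefore closed in the total space.
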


\begin{proof}
Let
\(
(R_1,\mathscr X_1,\mathscr F_1)
\) and
\(
(R_2,\mathscr X_2,\mathscr F_2)
\)
be two spreadings out of \((X,\mathcal F)\). Let \(R_3\subset K\) be the
\(\mathbb Z\)-subalgebra generated by \(R_1\) and \(R_2\). Since all the data are
of finite presentation and the generic fibres are identified with the same foliated
variety, after localising \(R_3\) the two base-changed models are isomorphic as
foliated models.

Suppose that the first model has \(p\)-closed fibres over a nonempty open subset of
\(\operatorname{Spec}R_1\). Its inverse image in \(\operatorname{Spec}R_3\) is
nonempty and open, and the base-changed fibres are still \(p\)-closed: the
\(p\)-curvature morphism commutes with extension of the residue field. Through the
above isomorphism, the same is true for the base change of the second model over a
nonempty open subset \(T\subset\operatorname{Spec}R_3\).

The morphism \(T\to\operatorname{Spec}R_2\) is dominant and of finite type, so its
image contains a nonempty open subset \(S_2^\circ\). Let \(s_2\) be a closed point
of \(S_2^\circ\). There is a closed point \(s_3\in T\) over \(s_2\). The residue
field extension \(\kappa(s_2)\subset\kappa(s_3)\) is faithfully flat, and the
\(p\)-curvature of \(\mathscr F_{2,s_2}\) becomes zero after this extension.
Consequently it is already zero over \(\kappa(s_2)\). Thus
\(\mathscr F_{2,s_2}\) is \(p(s_2)\)-closed. This proves the assertion.
\end{proof}

\begin{remark}
The phrase ``for almost all primes'' refers to the closed points of a nonempty open
subset of \(\operatorname{Spec}R\), not merely to rational prime numbers. Different
closed points may have the same residue characteristic. If \(K\) is a number field
and \(R\) is a localisation of its ring of integers, the closed points are the nonzero
prime ideals not inverted in \(R\), and \(p(s)\) is their residue characteristic.
\end{remark}

\begin{definition}\label{pcloseddefc}
Let $X$ be a normal variety over $\mathbb C$, and let $\mathcal F$ be a foliation on
$X$. We say that $\mathcal F$ is \emph{$p$-closed for almost all primes} if there
exist a subfield $K\subset\mathbb C$ finitely generated over $\mathbb Q$, a normal
variety $X_K$ over $K$, and a foliation $\mathcal F_K$ on $X_K$, such that
\(
X_K\times_K\mathbb C\simeq X,\)
and \(
\mathcal F_K\otimes_K\mathbb C\simeq\mathcal F,
\)
and $\mathcal F_K$ is $p$-closed for almost all primes in the sense of
Definition~\ref{pclosedalgdef}.
\end{definition}

\begin{lemma}\label{pclosedopencondition}
Let $X$ be a normal variety over $\mathbb C$, let $\mathcal F$ be a foliation, and
let $U\subset X$ be a nonempty Zariski open subset. Then $\mathcal F$ is $p$-closed
for almost all primes if and only if $\mathcal F|_U$ is $p$-closed for almost all
primes.
\end{lemma}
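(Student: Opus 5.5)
The plan is to reduce both directions to Lemma~\ref{pcloseopenpositivechar} on the fibres of a common spreading out, using Lemma~\ref{independenceofspreadingout} to handle the fact that $X$ and $U$ may a priori be defined over different fields.

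\emph{Step 1: a compatible spreading out.} Choose a subfield $K\subset\mathbb C$, finitely generated over $\mathbb Q$, such that $X$, $\mathcal F$ and the open subset $U$ are all defined over $K$. Concretely, this gives $X_K$, $\mathcal F_K$ and an open $U_K\subset X_K$ with $U_K\times_K\mathbb C=U$; this is possible since $X\setminus U$ is cut out by finitely many equations. Enlarging $K$ if necessary, we may take it to contain any field of definition witnessing $p$-closedness of either $\mathcal F$ or $\mathcal F|_U$. The same $K$-models then witness the property, because enlarging $K$ corresponds to a dominant base change of the arithmetic base and preserves fibrewise $p$-closedness. Next, spread everything out over $S=\operatorname{Spec}R$ to get $\mathscr X\to S$ with foliation $\mathscr F$ and an open $\mathscr U\subset\mathscr X$ restricting to $U_K$. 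After shrinking $S$, we arrange that:
\begin{itemize}
\item $\mathscr X\to S$ is flat with normal, geometrically integral fibres;
\item $\mathscr F$ is flat and saturated fibrewise, so each $\mathscr F_s$ is a foliation on $\mathscr X_s$;
\item each $\mathscr U_s$ is a nonempty open subset of $\mathscr X_s$;
\item $(\mathscr U,\mathscr F|_{\mathscr U})$ is a spreading out of $(U_K,\mathcal F_K|_{U_K})$.
\end{itemize}
These are standard generic-fibre-to-open-subset spreading arguments. Nonemptiness of $\mathscr U_s$, for instance, holds because $\mathscr X\setminus\mathscr U$ has generic fibre of strictly smaller dimension.

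\emph{Step 2: the two directions.} By Lemma~\ref{independenceofspreadingout}, $p$-closedness of $\mathcal F_K$ (resp.\ of $\mathcal F_K|_{U_K}$) may be tested on these particular models after shrinking $S$. The restriction $\mathscr F_s|_{\mathscr U_s}$ is the restriction of the foliation $\mathscr F_s$ to a nonempty open subset of the normal variety $\mathscr X_s$. So Lemma~\ref{pcloseopenpositivechar} gives that $\mathscr F_s$ is $p(s)$-closed if and only if $\mathscr F_s|_{\mathscr U_s}$ is. Applying this for every closed point $s$ of a suitable nonempty open subset of $S$ proves both directions.

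\emph{Main obstacle.} The step needing care is Step 1, in particular that fibrewise saturation of $\mathscr F$ and normality of the fibres hold on a nonempty open subset of $S$. For saturation I would first use generic flatness of $T_{\mathscr X/S}/\mathscr F$, then argue that torsion-freeness of its generic fibre spreads out. Alternatively, one can observe that the $p$-curvature argument in Lemma~\ref{pcloseopenpositivechar} only needs the target sheaf $N$ to be torsion-free on an integral fibre, and this holds after shrinking $S$.
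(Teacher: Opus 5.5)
Your proposal is correct and follows the same route as the paper. The paper's proof simply chooses a common model over a finitely generated $\mathbb Z$-algebra and applies Lemma~\ref{pcloseopenpositivechar} on the closed fibres after shrinking the base; your extra bookkeeping (defining $U$ over $K$, comparing models via Lemma~\ref{independenceofspreadingout}, and ensuring $\mathscr U_s\neq\varnothing$ and fibrewise saturation) only makes explicit what the paper leaves implicit.
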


\begin{proof}
After choosing a model over a finitely generated $\mathbb Z$-algebra, the assertion
follows from Lemma~\ref{pcloseopenpositivechar} on the closed fibres, after possibly
shrinking the base.
\end{proof}

\subsection{Flat connections and the Grothendieck--Katz conjecture}

The finite-holonomy argument in Section~\ref{holonomysection} uses the analogous notion of $p$-curvature
for flat connections.

\begin{definition}[Connection \(p\)-curvature]\label{connectionpcurvature}
Let \(k\) be a field of characteristic \(p>0\), let \(B\) be a smooth
\(k\)-variety, and let
\(
F_B:B\longrightarrow B
\)
be the absolute Frobenius morphism. Let \(\mathcal E\) be a vector bundle on
\(B\), and let
\[
\nabla:\mathcal E\longrightarrow\mathcal E\otimes\Omega^1_{B/k}
\]
be a flat algebraic connection. For a local vector field
\(v\in T_{B/k}\), let \(\nabla_v\) denote covariant differentiation along
\(v\), and put \(v^{[p]}:=v^p\), the \(p\)-fold composition of \(v\) as a
derivation of \(\mathcal O_B\). The operator
\[
\psi_\nabla(v):=\nabla_v^p-\nabla_{v^{[p]}}
\]
is \(\mathcal O_B\)-linear on \(\mathcal E\), and the assignment
\(v\mapsto\psi_\nabla(v)\) is \(p\)-linear. It therefore defines an
\(\mathcal O_B\)-linear morphism
\[
\psi_\nabla:F_B^*T_{B/k}
\longrightarrow
\mathcal{E}nd_{\mathcal O_B}(\mathcal E),
\]
called the \emph{\(p\)-curvature} of \(\nabla\).

More generally, let \(G\) be a smooth algebraic group over \(k\), and
let
\(
\pi:P\longrightarrow B
\)
be a right principal \(G\)-bundle. An algebraic connection on \(P\) is
a \(G\)-invariant subbundle \(H\subset T_{P/k}\) such that
\[
d\pi|_H:H\stackrel{\simeq}{\longrightarrow}\pi^*T_{B/k}.
\]
It is \emph{flat} if \(H\) is closed under the Lie bracket.

Assume that \(H\) is flat. If \(v\) is a local section of
\(T_{B/k}\), denote its unique horizontal lift by \(\widetilde v\).
Since
\[
d\pi\bigl(\widetilde v^{[p]}\bigr)
=
v^{[p]}
=
d\pi\bigl(\widetilde{\,v^{[p]}\,}\bigr),
\]
the difference
\(
\widetilde v^{[p]}-\widetilde{\,v^{[p]}\,}
\)
is vertical. It is also \(G\)-invariant, and hence determines a local
section of
\[
\operatorname{ad}(P)
:=
P\times^G\operatorname{Lie}(G).
\]
The map
\(
v\longmapsto
\widetilde v^{[p]}-\widetilde{\,v^{[p]}\,}
\)
is \(p\)-linear and therefore defines an
\(\mathcal O_B\)-linear morphism
\[
\psi_H:
F_B^*T_{B/k}
\longrightarrow
\operatorname{ad}(P),
\]
called the \emph{\(p\)-curvature} of the principal connection.
\end{definition}

\begin{conjecture}[Grothendieck--Katz, \cite{Katz1972}]
\label{grothendieckkatz}
Let \(K\) be a field of characteristic zero, let \(B\) be a smooth
geometrically connected \(K\)-variety, and let
\((\mathcal E,\nabla)\) be a flat algebraic connection on \(B\).
Suppose that, after spreading out, its reductions have vanishing
\(p\)-curvature for almost all primes. Then
\((\mathcal E,\nabla)\) becomes trivial after a finite étale cover.

When \(K\subset\mathbb C\), this is equivalent to saying that the
connection has finite monodromy.
\end{conjecture}

The converse implication is immediate. We shall use two known cases of
the forward implication. The first is the rank-one case.

\begin{theorem}[\cite{ChudnovskyChudnovsky1985}, \cite{Katz1982}]
\label{rankoneGK}
Let \(K\subset\mathbb C\) be a field finitely generated over
\(\mathbb Q\), let \(B\) be a smooth geometrically connected
\(K\)-variety, and let \((\mathcal L,\nabla)\) be a line bundle with a
flat algebraic connection on \(B\). Suppose that, after spreading out,
its reductions have vanishing \(p\)-curvature for almost all primes.
Then the induced connection on \(B_{\mathbb C}\) has finite monodromy.
Equivalently, after passing to a connected finite étale cover of
\(B_{\mathbb C}\), the pull-back of \(\mathcal L\) admits a
nowhere-vanishing horizontal section.
\end{theorem}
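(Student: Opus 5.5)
The plan is to reduce the statement to curves, then to split the problem into a local part about exponents and a global part about a point on an algebraic group. The global part is where the Chudnovsky--Bost type algebraicity criterion enters.

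First, the monodromy of a rank-one flat connection on \(B_{\mathbb C}\) is a character
\[
\chi:\pi_1(B_{\mathbb C}^{\mathrm{an}})\longrightarrow\mathbb C^*,
\]
and finiteness of \(\chi\) may be tested on a curve. By a Lefschetz-type theorem, a sufficiently general smooth curve \(C\subset B\), cut out by general hyperplane sections and defined over a finitely generated extension of \(K\), has \(\pi_1(C_{\mathbb C})\to\pi_1(B_{\mathbb C})\) surjective. Vanishing of \(p\)-curvature is preserved under restriction to \(C\) after spreading out, so we may assume \(B\) is a smooth affine curve. We then choose a smooth compactification \(\overline B\) with boundary \(\Sigma=\overline B\setminus B\), and extend \(\mathcal L\) to \(\overline B\) with a logarithmic connection along \(\Sigma\).

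Second, we treat the local exponents. At each \(\sigma\in\Sigma\) the residue \(\operatorname{Res}_\sigma\nabla\) lies in \(K\). Vanishing \(p\)-curvature near \(\sigma\) forces the reduction of this residue to satisfy \(r^p-r=0\), i.e. to lie in \(\mathbb F_p\), for almost all \(p\). A standard Chebotarev/density argument then shows the residue is rational; this is Katz's argument that nilpotent \(p\)-curvature gives regular singularities with rational exponents. After a finite Kummer-type cover of \(\overline B\) ramified over \(\Sigma\), all residues become integers. Twisting by \(\mathcal O(\sum n_\sigma\sigma)\) then produces a holomorphic connection on a degree-zero line bundle on a compact curve \(\overline B'\), still with vanishing \(p\)-curvature for almost all \(p\). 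Finite covers only change the monodromy group by finite index, so it suffices to treat this holomorphic case.

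Third, we treat the global part, which is the main obstacle. Pairs \((\mathcal L,\nabla)\) of degree zero with a holomorphic connection on \(\overline B'\) are parametrised by the universal vector extension \(E\) of the Jacobian, a commutative algebraic group of dimension \(2g\). The exponential map of \(E\) is related to the Betti realisation \(\operatorname{Hom}(\pi_1,\mathbb C^*)\), and finite monodromy corresponds to our point \(P\in E(K)\) being torsion. Vanishing \(p\)-curvature for almost all \(p\) translates into the statement that the formal logarithm of the horizontal section, equivalently a formal one-parameter subgroup through \(P\), has \(p\)-integral coefficients with \(p\)-adic radius of convergence \(\ge1\) for almost all \(p\). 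At the remaining places the radius is positive, and over \(\mathbb C\) the germ is entire. One then applies the Chudnovsky algebraicity criterion, in Bost's form for formal leaves of foliations on algebraic groups. By this criterion the Zariski closure of the analytic subgroup determined by \(P\) is algebraic. Since the subgroup is a one-parameter analytic subgroup of \(E\) and \(E\) contains no nontrivial \(\mathbb G_a\) in its vector part compatible with these integrality conditions, the subgroup is forced to be finite, and hence \(P\) is torsion.

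The hard step is the third: setting up the height and slope estimate so that Chudnovsky's criterion applies. The other steps are formal reductions. Following Katz, I would try to bypass the general criterion by working directly on the Kummer cover. There the horizontal section is a formal power series \(f\) whose reductions satisfy \(f^p\in\mathcal O\) mod \(p\). This would allow applying Chudnovsky's theorem to the single function \(f\), giving first that \(f\) is algebraic and then that the monodromy is finite.
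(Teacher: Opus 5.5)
Your Steps 1 and 2 are fine: they are the standard reduction (Lefschetz restriction to a curve, Katz's regularity and rationality of residues, a finite cover, and a twist to a holomorphic connection on a degree-zero bundle on a compact curve). The gap is Step 3, which carries the whole content of the theorem, and as written it applies the algebraicity criterion to the wrong object.

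The Chudnovsky and Bost criteria act on a formal germ defined over the number field \(K\), at a \(K\)-rational point, whose \(p\)-adic size is controlled by the \(p\)-curvature hypothesis. A ``one-parameter subgroup through \(P\)'' in the universal vector extension \(E\) is \(t\mapsto\exp(tv)\) with \(\exp(v)=P\). Its direction \(v=\log P\) is a period-type vector, in general transcendental and not defined over \(K\). The \(p\)-curvature of \((\mathcal L,\nabla)\) says nothing about it, so the criterion has nothing to act on.

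The conclusions you then draw are empty or false. The Zariski closure of any subset is algebraic, and a one-parameter subgroup is never finite. The claim that \(E\) contains no \(\mathbb G_a\) ``compatible with these integrality conditions'' is exactly what must be proved, already for the points \((\mathcal O,d+\omega)\) with \(\omega\neq0\) holomorphic, which lie in the vector part \(\mathbb G_a^g\subset E\). Nor is the logarithm of a horizontal section \(p\)-integral in general. For example, \(d-\tfrac12\,dt/(1+t)\) has vanishing \(p\)-curvature for odd \(p\) and horizontal section \((1+t)^{1/2}\in\mathbb Z_{(p)}[[t]]\), but \(\tfrac12\log(1+t)\) has coefficient \(1/(2p)\) in degree \(p\).

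The arithmetic object must be the horizontal leaf itself. Transport \((\mathcal L,\nabla)\) to \(J\) by Abel--Jacobi. Then \(\mathcal L\) defines an extension \(0\to\mathbb G_m\to G\to J\to0\), and \(\nabla\) defines a \(K\)-rational subspace \(\mathfrak h\subset\operatorname{Lie}G\) complementary to \(\operatorname{Lie}\mathbb G_m\). Vanishing \(p\)-curvature means exactly that \(\mathfrak h\) is closed under \(p\)-th powers modulo almost all \(p\). Bost's criterion, applied to the leaves \(g\exp(\mathfrak h)\), makes \(\mathfrak h\) the Lie algebra of an algebraic subgroup \(H\). Then \(H\to J\) is an isogeny, and the monodromy lies in the finite group \(H\cap\mathbb G_m\).

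Your fallback, applying Chudnovsky's theorem to the horizontal section \(f\) with \(f'/f=a\), is the right idea and is the paper's route. The paper simply cites Chudnovsky--Chudnovsky for rank one on curves and Katz for the reduction. However, the input you state, ``\(f^p\in\mathcal O\) mod \(p\)'', is either automatic for a \(p\)-integral series or not what the hypothesis gives. The actual input is \(a^p+a^{(p-1)}\equiv0\pmod p\), equivalently a nonzero rational solution modulo \(p\). Over a number field you could even skip the curve reduction: Theorem~\ref{bostsolvable} with \(G=\mathbb G_m\), applied to the \(\mathbb G_m\)-torsor of \(\mathcal L\), gives the statement on any smooth \(B\).

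What your outline omits entirely is the passage from a finitely generated \(K\) to number fields. Your curve lives over a finitely generated extension of \(K\), whereas every height and slope argument requires a number field. For rank one the fix is short:
\begin{enumerate}
\item Realise \(K\) as the function field of a variety \(T\) over a number field. After shrinking \(T\), the family is topologically locally trivial.
\item For each loop \(\gamma\), the monodromy \(\chi_t(\gamma)\) is holomorphic in \(t\) near the complex point given by \(K\subset\mathbb C\).
\item At the analytically dense \(\overline{\mathbb Q}\)-points of a Zariski open subset of \(T\), the hypothesis specialises, so the number-field case makes \(\chi_t(\gamma)\) a root of unity there.
\item Hence \(|\chi_t(\gamma)|\equiv1\), so \(\chi_t(\gamma)\) is a constant root of unity. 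Finiteness follows because \(\pi_1\) is finitely generated.
\end{enumerate}
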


\begin{proof}
The result for rank-one connections on algebraic curves is
\cite[Theorem~8.1]{ChudnovskyChudnovsky1985}. The general case follows
from Katz's reduction to curves
\cite[proof of Theorem~10.5]{Katz1982}.
\end{proof}

Recall that a flat algebraic connection is called \emph{isotrivial} if
it becomes trivial after a finite étale cover. For the higher-order
lifting argument, we use the following theorem of André.

\begin{theorem}[{\cite[Corollary~4.3.4 and
Corollary~7.1.3]{Andre2004}}]
\label{andreextension}
Let \(K\) be a field of characteristic zero, let \(B\) be a smooth
geometrically connected \(K\)-variety, and let
\[
0\longrightarrow
(\mathcal E',\nabla')
\longrightarrow
(\mathcal E,\nabla)
\longrightarrow
(\mathcal E'',\nabla'')
\longrightarrow0
\]
be an exact sequence of flat algebraic connections on \(B\).
Suppose that \((\mathcal E',\nabla')\) and
\((\mathcal E'',\nabla'')\) are isotrivial. If, after spreading out,
the reductions of \((\mathcal E,\nabla)\) have vanishing
\(p\)-curvature for almost all primes, then
\((\mathcal E,\nabla)\) is isotrivial.
\end{theorem}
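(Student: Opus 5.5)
The plan is to reduce to the unipotent case by a finite étale base change, and then to the additive (\(\mathbb G_a\)) case of the Grothendieck--Katz conjecture. That additive case is the real content of André's result.

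\emph{Reductions.} Isotriviality over \(K\) may be checked after extending scalars to \(\mathbb C\) (after embedding a finitely generated field of definition into \(\mathbb C\)); over \(\mathbb C\) it means that the monodromy representation \(\rho\) of \(\pi_1(B^{\mathrm{an}})\) is finite. For a dense Zariski open \(U\subset B\) the map \(\pi_1(U^{\mathrm{an}})\to\pi_1(B^{\mathrm{an}})\) is surjective. We may therefore shrink \(B\) to an affine open set.

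Next we pass to a connected finite étale cover \(B'\to B\) on which both \((\mathcal E',\nabla')\) and \((\mathcal E'',\nabla'')\) become trivial. This is harmless for two reasons. First, vanishing of \(p\)-curvature is preserved under étale pull-back, since the \(p\)-curvature is compatible with étale base change on spread-out models. Second, \(\rho\) is finite as soon as its restriction to a finite-index subgroup is finite. On the affine \(B'\) the underlying extension of vector bundles splits, so in a suitable frame the connection takes the form
\[
\nabla=d+\begin{pmatrix}0&\Omega\\0&0\end{pmatrix},
\]
where \(\Omega\) is an \(r'\times r''\) matrix of algebraic \(1\)-forms. Flatness forces every entry of \(\Omega\) to be closed. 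Since \(\Omega\) is nilpotent of order two, the \(p\)-curvature is computed entrywise: for each entry \(\omega\) it is \(v^{p-1}(\omega(v))-\omega(v^{[p]})\). By Katz's formula, the vanishing of this expression for all \(v\) is equivalent to \(C(\omega)=0\) for the Cartier operator, that is, to \(\omega\) being exact modulo \(p\). Each entry is therefore a closed \(1\)-form whose reductions are exact for almost all primes. We must show that such an \(\omega\) has all periods zero. The monodromy of the connection is by the unipotent matrices \(\begin{pmatrix}1&\int_\gamma\Omega\\0&1\end{pmatrix}\), which are of infinite order unless all the periods \(\int_\gamma\omega\) vanish. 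Conversely, if all periods vanish the monodromy on \(B'\) is trivial, and \(\rho\) is finite.

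\emph{Main step (the \(\mathbb G_a\) case).} This is the expected obstacle. First we reduce to curves, following Katz's argument in Theorem~\ref{rankoneGK}: a period on \(B'\) is detected on a general curve through a point, and the hypotheses restrict to such curves. Fix a \(K\)-rational point \(b\) of the curve and a local parameter there, and let \(g\) be the formal primitive of \(\omega\) with \(g(b)=0\). The condition \(C(\omega)=0\) modulo almost all \(p\) gives a bound on the denominators of the Taylor coefficients of \(g\): only the primes in a finite set, plus a uniform geometric factor, can appear. In Chudnovsky--Bost terms, \(g\) is a \(G\)-function-type solution, satisfying the arithmetic hypothesis of the algebraicity criterion, of a flat connection with almost everywhere vanishing \(p\)-curvature. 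This is the same input that proves Theorem~\ref{rankoneGK}. We then apply the Bost/Chudnovsky algebraicity criterion to the formal leaf \(\{y=g(x)\}\) of the rank-two connection, and conclude that \(g\) is algebraic.

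An algebraic function has finite monodromy. On the other hand, analytic continuation of \(g\) along a loop \(\gamma\) changes it to \(g+\int_\gamma\omega\). A translation of finite order in characteristic zero is trivial, so all periods vanish. Hence \(\omega\) is exact and \(\rho\) is finite, which proves isotriviality of \((\mathcal E,\nabla)\). I expect the delicate part to be the denominator estimate needed to feed the algebraicity criterion, together with the uniformity in \(p\) over a finitely generated base. This is where André's \(G\)-function machinery is genuinely required.
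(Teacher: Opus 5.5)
The paper does not prove this statement; it quotes it from André. Your reductions are correct. After a finite étale cover trivialising \((\mathcal E',\nabla')\) and \((\mathcal E'',\nabla'')\), and an \(\mathcal O\)-linear splitting on an affine base, you get \(\nabla=d+\bigl(\begin{smallmatrix}0&\Omega\\0&0\end{smallmatrix}\bigr)\) with closed entries. The \(p\)-curvature is then entrywise \(v^{p-1}(\omega(v))-\omega(v^{[p]})=-\bigl(C(\omega)(v)\bigr)^p\), and isotriviality amounts to the vanishing of all periods.

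This only shows that the theorem is equivalent to its \(\mathbb G_a\)-case: a closed form that is exact modulo almost all \(p\) is exact. That is exactly what the paper extracts from the theorem in Lemma~\ref{gmgaarithmetic}. So all the content is in your main step, and there the argument has a genuine gap.

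The denominator estimate you single out is the easy part. At a good prime where \(\omega\equiv dh\), write \(\omega=dh+p\,\omega_1\). Then \(g=h+p\int\omega_1\) has \(p\)-adic radius \(1\) and \(v_p(a_n)\ge 1-\lfloor\log_p n\rfloor\). This does not exclude good primes \(p\le\sqrt n\) from the denominators, as you claim, but their contribution is subexponential.

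What you never supply is the archimedean hypothesis of the algebraicity criterion. Integrality plus D-finiteness is not enough: \(\sum_n\binom{2n}{n}^2x^n\) has integral coefficients, satisfies a Picard--Fuchs equation, and is transcendental. A criterion whose only input is ``\(G\)-function solution of a connection with vanishing \(p\)-curvatures'' would simply be the Grothendieck--Katz conjecture. Bost's criterion needs the analytic leaf to be Liouville (psh functions bounded above are constant), and André's variants need a quantitative archimedean uniformisation condition.

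For your leaf \(\{y=g(x)\}\) over a hyperbolic curve neither is available. It is the covering attached to the period group \(\{\int_\gamma\omega\}\simeq\mathbb Z^k\), a hyperbolic Riemann surface with no nonconstant map from \(\mathbb C\). For \(k\ge3\) such coverings are in general transient, so they carry nonconstant bounded subharmonic functions, and you have no a priori bound on \(k\).

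The standard remedy is to move the problem to a commutative algebraic group, where leaves are Liouville.

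\emph{Reformulation.} There are a morphism \(\varphi\) from the curve to the universal vector extension \(G\) of its generalised Jacobian and an invariant form \(\theta\) on \(G\) with \(\omega-\varphi^*\theta\) exact. For invariant \(\theta\) one has \(\theta(X^{[p]})=\bigl(C(\theta)(X)\bigr)^p\). For almost all \(p\), \(\varphi^*\) is injective on invariant forms, so the hypothesis says that \(\theta\) kills the image of the \(p\)-operation. In particular \(\ker\theta\) is a \(p\)-closed Lie subalgebra.

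\emph{Algebraicity.} The leaf of \(\ker\theta\) through the origin is the image of a vector space under \(\exp\), hence Liouville. Bost's theorem on \(p\)-closed Lie subalgebras then gives \(\ker\theta=\operatorname{Lie}H\) for an algebraic subgroup \(H\).

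\emph{Conclusion.} The \(p\)-operation on \(\operatorname{Lie}\mathbb G_m\) is the identity, and that of an elliptic curve is nonzero at its infinitely many ordinary primes. Hence \(G/H\simeq\mathbb G_a\), so \(\theta=d\chi\) for a homomorphism \(\chi\), and \(\omega\) is exact.

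Finally, all these criteria are over number fields, whereas \(K\) is an arbitrary field of characteristic zero, so you also need a specialisation step. For instance, spread \(\omega\) over a base \(S\) of finite type over a number field. After shrinking \(S\), the relative \(H^1_{\mathrm{dR}}\) is locally free and commutes with base change, and \([\omega]\) vanishes at a Zariski-dense set of number-field points of \(S\).
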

For comparison, over number fields Bost proved the following more
general principal-bundle statement.

\begin{theorem}[{\cite[Theorem~2.9]{Bost2001}}]
\label{bostsolvable}
Let \(K\) be a number field, let \(G\) be an algebraic group over \(K\)
whose neutral component is solvable, and let \(P\to B\) be a principal
\(G\)-bundle over a smooth connected \(K\)-variety, equipped with a
flat connection. Then the reductions of the connection have vanishing
\(p\)-curvature for almost all primes if and only if the bundle with
connection becomes trivial after a finite étale cover of \(B\).
\end{theorem}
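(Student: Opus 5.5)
The direction ``\(\Leftarrow\)'' is immediate. A connection that becomes trivial on a finite étale cover \(B'\to B\) has trivial \(p\)-curvature on \(B'\) after spreading out. The \(p\)-curvature \(\psi_H\) is \(\mathcal O_B\)-linear and compatible with étale pull-back, and \(B'\to B\) is faithfully flat, so \(\psi_H\) already vanishes on \(B\) for almost all primes. For ``\(\Rightarrow\)'' I plan to proceed by dévissage on the structure group. At each step I pass to a finite étale cover, or to a finite extension of \(K\), both of which are harmless. I use repeatedly that \(p\)-curvature is functorial: for a homomorphism \(G\to G'\) and the induced push-out \(P\times^G G'\), the \(p\)-curvature is the image of \(\psi_H\) under \(\operatorname{ad}(P)\to\operatorname{ad}(P\times^GG')\). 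The same holds for reductions of structure group compatible with the connection.

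\emph{Step 1: reduction to \(G\) connected.} The push-out along \(G\to G/G^0\) is a torsor under a finite étale group with an induced flat connection. Since \(G/G^0\) is finite, the horizontal distribution on this torsor is the whole tangent bundle. Hence a connected component \(B'\) of \(P/G^0\) is finite étale over \(B\). Over \(B'\), the bundle \(P\) reduces to a \(G^0\)-bundle compatibly with the connection, and the \(p\)-curvature still vanishes. So I may assume \(G\) is connected solvable. After a finite extension of \(K\), I may also assume \(G=T\ltimes U\) with \(T\) a split torus and \(U\) unipotent, where \(U\) has a central series with successive quotients \(\mathbb G_a\).

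\emph{Step 2: the torus part.} I push out along \(G\to G/U\simeq\mathbb G_m^r\). This gives \(r\) line bundles with flat connections, each with vanishing \(p\)-curvature for almost all primes. By Theorem~\ref{rankoneGK}, each becomes trivial after a finite étale cover. After passing to one common cover, the \(T\)-bundle with connection is trivial. Choosing a horizontal section of \(P/U\) then reduces \(P\) to a \(U\)-bundle with flat connection, still with vanishing \(p\)-curvature.

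\emph{Step 3: the unipotent part.} This is the step I expect to be the main obstacle. I argue by induction along a central series \(1\to\mathbb G_a\to U\to U'\to1\). By induction the push-out to \(U'\) becomes trivial after a finite étale cover, so \(P\) reduces to a \(\mathbb G_a\)-torsor with flat connection. In a local trivialization this connection is given by a closed \(1\)-form \(\beta\), and its \(p\)-curvature is the image of the original one, hence zero. I encode the \(\mathbb G_a\)-torsor as the rank-two extension \(0\to\mathcal O\to\mathcal E\to\mathcal O\to0\) with connection matrix \(\begin{pmatrix}0&\beta\\0&0\end{pmatrix}\). Theorem~\ref{andreextension} then shows that \(\mathcal E\) is isotrivial. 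Its monodromy is a finite subgroup of \(\mathbb G_a(\mathbb C)\), which is torsion-free, so the monodromy is trivial. Hence \(\beta\) has trivial periods and is exact after the finite cover, and the \(\mathbb G_a\)-torsor is trivial. The delicate points are checking that the \(p\)-curvature really descends to the central \(\mathbb G_a\)-piece after the previous trivializations, and that the covers accumulated at each stage remain finite étale. Composing all the covers gives the finite étale cover trivializing \((P,H)\).
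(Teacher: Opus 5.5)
Your argument is correct, but it is not what the paper does. The paper gives no proof of this statement; it quotes it from Bost, where it comes out of Bost's arithmetic algebraicity criterion for leaves of foliations.

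You argue instead by d\'evissage of the structure group, along \(G\to G/G^0\), then \(G^0\to G^0/U\simeq T\), then a central series of \(U\). This reduces everything to the two primitive cases \(\mathbb G_m\) and \(\mathbb G_a\), which you settle with Theorem~\ref{rankoneGK} and Theorem~\ref{andreextension} respectively. Your \(\mathbb G_a\) step is essentially the argument of Lemma~\ref{gmgaarithmetic}, with two changes: the torsion-freeness of \(\mathbb G_a(\mathbb C)\) replaces the Galois averaging, and the global rank-two extension obtained by pushing out along \(\mathbb G_a\to\operatorname{GL}_2\) replaces the trivial torsor.

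What your route buys is that the statement becomes a formal consequence of the two arithmetic inputs already quoted in the paper. With those inputs taken in the generality stated there, it never uses that \(K\) is a number field. It also shows that the proof of Theorem~\ref{theoremA} can be read as exactly this d\'evissage carried out by hand for the solvable group of transverse jets: the Bott connection is the \(\mathbb G_m\)-piece, and the obstructions \(\beta_m\) are the successive \(\mathbb G_a\)-pieces. Bost's route does not go through Theorem~\ref{andreextension}, and it sits inside a general algebraicity theory for leaves.

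The points you flag as delicate are routine. Let \(Q\subset P\) be a reduction to a subgroup \(G_1\subset G\) preserved by \(H\). The horizontal lifts on \(Q\) are restrictions of those on \(P\), so the \(p\)-curvature of \(Q\) is the restriction of \(\psi_H\). It takes values in \(\operatorname{ad}(Q)\), which injects into \(\operatorname{ad}(P)\) because \(\operatorname{Lie}G_1\to\operatorname{Lie}G\) is injective.

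Two things should be said explicitly. First, Step~1 uses that \(G\) is linear, in order to write \(G^0\) as an extension of a torus by a unipotent group. Second, Theorem~\ref{rankoneGK} produces its cover and horizontal section over \(\mathbb C\), and these must be descended to a finite extension of \(K\) before you continue.
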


\subsection{Examples}

\begin{example}\label{example1}
Let \(X=\mathbb C^2\), and let \(\mathcal F\) be the foliation generated by
\(
v=x\partial_x+y\partial_y.
\)
Then \(X\) and \(\mathcal F\) are defined over \(\mathbb Q\). Taking the standard model
\(\operatorname{Spec}\mathbb Z[x,y]\), for every prime number \(p\) we have
\[
(x\partial_x+y\partial_y)^p=x\partial_x+y\partial_y
\]
in characteristic $p$.
Hence \(\mathcal F\) is \(p\)-closed for almost all primes. The separatrices through the
origin are the lines \(ax-by=0\), with \([a:b]\in\mathbb P^1\), and hence are algebraic.
\end{example}

\begin{example}
Let \(X=\mathbb C^2\), and let \(\mathcal F\) be generated by
\(
v=x^2\partial_x+y\partial_y.
\)
With respect to the standard model over \(\mathbb Z\), in characteristic \(p\) one has
\[
(x^2\partial_x+y\partial_y)^p=y\partial_y.
\]
Thus \(v^p\) is not contained in the rank-one sheaf generated by \(v\), and hence
\(\mathcal F\) is not \(p\)-closed for almost all primes. The separatrices through the
origin are given by \(x=0\) and \(y=0\), while the other leaves are analytic curves of
the form
\(
y=Ce^{-1/x},
\)
\(C\in\mathbb C,\)
and are not algebraic.
\end{example}

\begin{example}
Let \(X=\mathbb G_m\times\mathbb G_m\) be a complex algebraic torus. A torus-invariant
rank-one foliation is generated by a vector field of the form
\(
v=x\partial_x+\lambda y\partial_y
\)
for some \(\lambda\in\mathbb C\), after rescaling. If \(\lambda\in\mathbb Q\), then after
choosing a model over \(\mathbb Z\) away from the primes dividing the denominator of
\(\lambda\), we have
\[
v^p=x\partial_x+\lambda^p y\partial_y
=x\partial_x+\lambda y\partial_y
\]
in characteristic \(p\). Thus \(\mathcal F\) is \(p\)-closed for almost all primes.

Conversely, if the foliation is \(p\)-closed for almost all primes, then the reductions
of \(v^p\) are proportional to the reductions of \(v\) for almost all primes. This forces
\(\lambda^p=\lambda\) modulo almost all primes, hence \(\lambda\in\mathbb Q\). Therefore
\(\mathcal F\) is \(p\)-closed for almost all primes if and only if \(\lambda\in\mathbb Q\).

Moreover, \(\mathcal F\) is algebraically integrable if and only if \(\lambda\in\mathbb Q\).
Indeed, if \(\lambda=m/n\) with \(m,n\in\mathbb Z\), then \(x^m y^{-n}\) is a rational
first integral. If \(\lambda\notin\mathbb Q\), the leaves are not algebraic.
\end{example}

\subsection{\texorpdfstring{Local analytic $p$-closedness}{Local analytic p-closedness}}

We now introduce the local analytic notion of $p$-closedness used in the main results.

\begin{definition}\label{definitionpclosed}
Let \(X\) be a normal complex variety and let \(\mathcal F\) be a foliation on \(X\).
Let \(x\in X\) be a closed point. We say that \(\mathcal F\) is
\emph{analytically \(p\)-closed at \(x\)} if there exist a normal complex variety \(Y\),
a closed point \(y\in Y\), and a foliation \(\mathcal G\) on \(Y\), such that:
\begin{enumerate}[label=\textup{(\arabic*)}]
\item the germs \((X,\mathcal F,x)\) and \((Y,\mathcal G,y)\) are analytically
isomorphic, i.e. there exist analytic neighbourhoods \(U\subset X^{\mathrm{an}}\) of
\(x\) and \(V\subset Y^{\mathrm{an}}\) of \(y\), together with an analytic isomorphism
\(U\simeq V\) identifying \(\mathcal F^{\mathrm{an}}|_U\) with
\(\mathcal G^{\mathrm{an}}|_V\);
\item \(\mathcal G\) is \(p\)-closed for almost all primes.
\end{enumerate}
We call \((Y,\mathcal G,y)\) a local \(p\)-closed model of
\((X,\mathcal F,x)\).
\end{definition}

\begin{example}\label{ex1}
Let \(X=\mathbb C^2\), and let \(\mathcal F\) be the foliation generated by
\(x\partial_x+\partial_y\). Since \(\mathcal F\) is regular, by the holomorphic Frobenius
theorem, for every point \(q\in \mathbb C^2\) there are analytic coordinates
\((x',y')\) centred at \(q\) in which \(\mathcal F\) is generated by \(\partial_{x'}\).
Let \((Y,\mathcal G)=(\mathbb C^2,\langle \partial_x\rangle)\). In characteristic \(p\),
one has \(\partial_x^p=0\). Hence \(\mathcal G\) is \(p\)-closed for all primes \(p\).
Therefore \(\mathcal F\) is analytically \(p\)-closed at every closed point of \(X\).
\end{example}

\begin{example}\label{ex2}
Let \(\mathcal F\) be the foliation on \(\mathbb C^2\) generated by
\(x^2\partial_x+y\partial_y\). Away from the origin, the foliation is regular, and hence
is analytically \(p\)-closed by Example~\ref{ex1}. The origin is a saddle-node.
By Lemma~\ref{pclosednotsaddle} below, \(\mathcal F\) is not analytically \(p\)-closed at
the origin.
\end{example}

\begin{definition}\label{definitionanalyticintegral}
Let \(X\) be a normal complex variety and let \(\mathcal F\) be a foliation of corank
\(q\) on \(X\). Let \(x\in X\) be a closed point.

We say that \(\mathcal F\) is \emph{analytically integrable at \(x\)} if there exist a
normal complex variety \(Y\), a closed point \(y\in Y\), a morphism
\(f:Y\to \mathbb C^q\), and a foliation \(\mathcal G\) on \(Y\), such that:
\begin{enumerate}
\item \(\mathcal G\) is induced by \(f\);
\item the germs \((X,\mathcal F,x)\) and \((Y,\mathcal G,y)\) are analytically
isomorphic.
\end{enumerate}

We say that \(\mathcal F\) is \emph{meromorphically integrable at \(x\)} if the same
condition holds with \(f:Y\dashrightarrow \mathbb C^q\) a rational map.
\end{definition}

\begin{remark}
Thus analytically integrable at \(x\) means that, after replacing the analytic germ
\((X,\mathcal F,x)\) by an analytically isomorphic algebraic germ, the foliation admits
an algebraic first integral. 
\end{remark}

\begin{example}
The separatrices of the foliation in Example~\ref{ex2} through the origin are
\((x=0)\) and \((y=0)\), but the leaves through other points are of the form
\(y=Ce^{-1/x}\). Their Euclidean closures contain the origin, but they are not analytic
subvarieties in any neighbourhood of the origin. Hence \(\mathcal F\) does not admit a
holomorphic first integral at the origin, and in particular it is not analytically
integrable there.
\end{example}

\begin{example}
Let \(X=\mathbb C^2\), and let \(\mathcal F\) be the foliation generated by
\(
x\partial_x-\lambda y\partial_y,
\)
where \(\lambda\in\mathbb C\). If \(\lambda\in\mathbb Q_{>0}\), then \(\mathcal F\) is
both analytically \(p\)-closed and analytically integrable at the origin. Indeed, after
writing \(\lambda=a/b\) with \(a,b\in\mathbb Z_{>0}\), the function \(x^a y^b\) is a
holomorphic first integral.

On the other hand, if \(\lambda\notin\mathbb Q\), then by
Proposition~\ref{mcq08}, \(\mathcal F\) is not analytically \(p\)-closed at the origin.
Moreover, \(\mathcal F\) does not admit a holomorphic first integral at the origin, since
the leaves are of the form \(y=Cx^{-\lambda}\), which are not analytic subvarieties in any
neighbourhood of the origin. Hence \(\mathcal F\) is not analytically integrable at the
origin.
\end{example}

\subsection{Birational maps and hypersurface sections}

We prove several stability properties of analytic $p$-closedness that will be used
repeatedly.

\begin{lemma}
Let \(K\) be a field finitely generated over \(\mathbb Q\), let
\(
\pi:Y\longrightarrow X
\)
be a proper birational morphism between normal \(K\)-varieties, and let
\(\mathcal F\) be a foliation on \(X\) which is \(p\)-closed for almost
all primes. Then the strict transform
\(
\mathcal G:=\pi^{-1}\mathcal F
\)
is \(p\)-closed for almost all primes.
\end{lemma}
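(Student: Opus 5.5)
The idea is that \(p\)-closedness is a generic condition, and \(\pi\) is an isomorphism over a dense open subset. Let \(V\subset X\) be a nonempty open subset over which \(\pi\) is an isomorphism, and put \(W:=\pi^{-1}(V)\subset Y\). Then \(W\simeq V\), and under this identification \(\mathcal G|_W\) is identified with \(\mathcal F|_V\). This holds because the strict transform is defined (Definition~\ref{pullbackfoliation}) as the saturated extension of the pull-back on a dense open set where \(\pi\) is an isomorphism. By Remark~\ref{uniqueopensubset} the foliation \(\mathcal G\) is determined by \(\mathcal G|_W\).

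Since \(\mathcal F\) is \(p\)-closed for almost all primes, the same holds for \(\mathcal F|_V\). Over a field \(K\) this is the arithmetic analogue of Lemma~\ref{pclosedopencondition}, and it is proved in the same way. First spread out \((X,\mathcal F)\) over some \(R\) with \(p\)-closed fibres over \(S^\circ\). Then spread out \(V\) as an open subscheme \(\mathscr V\subset\mathscr X\), after shrinking \(S\). The restriction \(\mathscr F|_{\mathscr V}\) still gives \(p\)-closed fibres, because restricting to an open subset obviously preserves \(p\)-closedness. By Lemma~\ref{independenceofspreadingout} the choice of model is irrelevant. Hence \(\mathcal G|_W\) is \(p\)-closed for almost all primes.

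It remains to pass from \(W\) back to \(Y\), using the easy direction together with Lemma~\ref{pcloseopenpositivechar}. Concretely, spread out the whole situation \(\pi\colon Y\to X\), \(W\subset Y\), and \(\mathcal G\) over a finitely generated \(R\). After shrinking \(\operatorname{Spec}R\), we may assume the following for every closed point \(s\):
\begin{itemize}
\item \(\mathscr Y_s\) is normal and integral;
\item \(\mathscr W_s\) is a nonempty open subset of \(\mathscr Y_s\);
\item \(\mathscr G_s\) is a foliation on \(\mathscr Y_s\);
\item \(\mathscr G_s|_{\mathscr W_s}\) is identified with \(\mathscr F_s|_{\mathscr V_s}\).
\end{itemize}
Then Lemma~\ref{pcloseopenpositivechar} applied on each fibre gives that \(\mathscr G_s\) is \(p(s)\)-closed.

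The main obstacle is the standard spreading-out bookkeeping in this last step. We must ensure that for almost all \(s\) the fibres remain normal and geometrically integral. We must also ensure that \(\mathscr G_s\) is still saturated in \(T_{\mathscr Y_s}\), so that it is a genuine foliation and the torsion-freeness argument in Lemma~\ref{pcloseopenpositivechar} applies. I would handle both by generic flatness and by the constructibility of the loci where fibres are normal and where \(T/\mathscr G\) is torsion-free. Even without saturation, it suffices that the cokernel \(T_{\mathscr Y_s}/\mathscr G_s\) is torsion-free on the generic fibre, hence on almost all fibres. Then the \(p\)-curvature, which vanishes on the dense open \(\mathscr W_s\), vanishes identically.
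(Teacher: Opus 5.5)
Your proposal is correct and is essentially the paper's proof: choose a dense open \(V\subset X\) over which \(\pi\) is an isomorphism, spread out \(\pi\), \(V\), \(\mathcal F\) and \(\mathcal G\) together, identify \(\mathcal G_s\) with \(\mathcal F_s\) on \(\pi_s^{-1}(V_s)\), and conclude on each closed fibre by Lemma~\ref{pcloseopenpositivechar}. Your preliminary step showing that \(\mathcal F|_V\) is \(p\)-closed for almost all primes, together with the appeal to Lemma~\ref{independenceofspreadingout}, is redundant once all the data are spread out simultaneously but is harmless; your explicit attention to normality of the fibres and saturation of \(\mathscr G_s\) is bookkeeping that the paper leaves implicit.
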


\begin{proof}
Choose a finitely generated integral
\(\mathbb Z\)-subalgebra \(R\subset K\), and spread out
\(X,Y,\pi,\mathcal F\), and \(\mathcal G\) over
\(S:=\operatorname{Spec}R\). After localising \(R\), we may assume that
the relevant fibres are normal and integral, that the induced
morphisms
\(
\pi_s:Y_s\longrightarrow X_s
\)
are birational, and that the reductions of \(\mathcal F\) are
\(p(s)\)-closed.

Choose a nonempty open subset \(V\subset X\) over which \(\pi\) is an
isomorphism. After shrinking \(S\), this open subset and the
isomorphism
\(
\pi^{-1}(V)\longrightarrow V
\)
spread out over \(S\).

Let \(s\in S\) be a closed point for which \(\mathcal F_s\) is
\(p(s)\)-closed. On the nonempty open subset
\(\pi_s^{-1}(V_s)\subset Y_s\), the strict transform
\(\mathcal G_s\) is identified with \(\mathcal F_s|_{V_s}\), and is
therefore \(p(s)\)-closed. Since \(p\)-closedness of a foliation on a
normal integral variety may be checked on any nonempty open subset,
Lemma~\ref{pcloseopenpositivechar} implies that
\(\mathcal G_s\) is \(p(s)\)-closed on all of \(Y_s\).

Thus \(\mathcal G\) is \(p\)-closed for almost all primes.
\end{proof}
\begin{corollary}\label{pclosebirational}
Let \((X,\mathcal F,x)\) be an analytically \(p\)-closed foliated germ,
and let \((X_0,\mathcal F_0,x_0)\) be a local \(p\)-closed model of it. Let
\(
\pi:Y\longrightarrow X_0
\)
be a proper birational morphism from a normal variety, and set
\(\mathcal G:=\pi^{-1}\mathcal F_0\). Then \(\mathcal G\) is
analytically \(p\)-closed at every point of \(\pi^{-1}(x_0)\).
\end{corollary}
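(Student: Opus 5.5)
The plan is to reduce the statement directly to the preceding lemma on strict transforms, with a small amount of care about fields of definition. By Definition~\ref{definitionpclosed}, the model \((X_0,\mathcal F_0)\) is a normal complex variety whose foliation is \(p\)-closed for almost all primes in the sense of Definition~\ref{pcloseddefc}. So there is a subfield \(K\subset\mathbb C\), finitely generated over \(\mathbb Q\), together with \((X_{0,K},\mathcal F_{0,K})\) over \(K\), whose base change to \(\mathbb C\) is \((X_0,\mathcal F_0)\) and which is \(p\)-closed for almost all primes. The morphism \(\pi:Y\to X_0\) is of finite type, so it is defined over some finitely generated extension of \(K\).

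\textbf{Step 1: descend \(\pi\).} Enlarge \(K\) to a finitely generated field \(K'\subset\mathbb C\) over which \(Y\) and \(\pi\) descend, giving \(\pi_{K'}:Y_{K'}\to X_{0,K'}\). We can choose \(K'\) so that \(Y_{K'}\) is normal and \(\pi_{K'}\) is proper and birational. This is legitimate because normality, properness and birationality descend along the field extension \(K'\subset\mathbb C\), after enlarging \(K'\) if needed.

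\textbf{Step 2: invariance under base field extension.} We must check that \(\mathcal F_{0,K'}\) is still \(p\)-closed for almost all primes. Take a spreading out over \(R\subset K\) and base change it to a finitely generated \(R'\subset K'\) containing \(R\). Every closed point of \(\operatorname{Spec}R'\) lying over the good open subset has a fibre obtained from a \(p\)-closed fibre by a residue field extension. The \(p\)-curvature commutes with this extension, as in the proof of Lemma~\ref{independenceofspreadingout}, so these fibres are again \(p\)-closed.

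\textbf{Step 3: apply the lemma.} The preceding lemma now shows that \(\pi_{K'}^{-1}\mathcal F_{0,K'}\) is \(p\)-closed for almost all primes. Its base change to \(\mathbb C\) is \(\mathcal G\), since strict transforms commute with flat base change: both foliations are saturated and agree on the open set where \(\pi\) is an isomorphism. Hence \(\mathcal G\) is \(p\)-closed for almost all primes in the sense of Definition~\ref{pcloseddefc}.

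\textbf{Step 4: conclude.} For any \(y\in\pi^{-1}(x_0)\), the triple \((Y,\mathcal G,y)\) serves as its own local \(p\)-closed model, taking the identity as the analytic isomorphism in Definition~\ref{definitionpclosed}. So \(\mathcal G\) is analytically \(p\)-closed at \(y\).

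The only step needing care is the descent bookkeeping in Steps 1 and 2. Everything else is formal.
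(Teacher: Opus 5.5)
Your proposal is correct and follows the paper's argument: the preceding lemma on strict transforms shows that \(\mathcal G\) is \(p\)-closed for almost all primes, and then \((Y,\mathcal G,y)\) is its own local \(p\)-closed model for every \(y\in\pi^{-1}(x_0)\). Your Steps 1--3 spell out descent bookkeeping that the paper's one-line proof leaves implicit, since that lemma is stated over a field finitely generated over \(\mathbb Q\): descending \(\pi\) to a finitely generated field \(K'\), checking that \(p\)-closedness survives enlarging \(K\) to \(K'\), and identifying the base change of \(\pi_{K'}^{-1}\mathcal F_{0,K'}\) with \(\mathcal G\).
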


\begin{proof}
By Lemma~\ref{pclosebirational}, the foliation
\(\mathcal G\) is \(p\)-closed for almost all primes. Hence, for every
\(y\in\pi^{-1}(x_0)\), the algebraic germ
\(
(Y,\mathcal G,y)
\)
is a local \(p\)-closed model of itself. It is therefore analytically
\(p\)-closed at \(y\).
\end{proof}
\begin{lemma}\label{quasietalepreservepclosed}
Let
\(
\pi:Y'\longrightarrow Y
\)
be a finite quasi-étale morphism between normal varieties. Let
\(\mathcal F\) be a foliation on \(Y\) which is \(p\)-closed for almost
all primes. Then the pull-back foliation
\(
\mathcal F':=\pi^{-1}\mathcal F
\)
is \(p\)-closed for almost all primes.
\end{lemma}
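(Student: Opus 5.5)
The plan is to follow the same pattern as the proof of the birational lemma above: reduce everything to a dense open subset, where the pull-back is an honest étale pull-back. Then use Lemma~\ref{pcloseopenpositivechar} to pass from that open subset to the whole of \(Y'_s\).

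First we choose a field \(K\) finitely generated over \(\mathbb Q\) over which \(Y,Y',\pi\) and \(\mathcal F\) are defined. We then spread everything out over \(S=\operatorname{Spec}R\): \(\mathscr Y'\to\mathscr Y\to S\), a finite morphism \(\pi_S\), and \(\mathscr F\subset T_{\mathscr Y/S}\). By Lemma~\ref{independenceofspreadingout} we may use this particular model to test \(p\)-closedness of \(\mathcal F\).

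Since \(\pi\) is quasi-étale, it is étale over a big open subset. It suffices, however, to choose a nonempty open \(V\subset Y\) with \(V\) smooth, \(\mathcal F|_V\) regular, and \(\pi|_{\pi^{-1}(V)}:\pi^{-1}(V)\to V\) finite étale. After shrinking \(S\), we may assume the following for every closed point \(s\). The open subset \(V\) spreads out to \(\mathscr V\). The fibres \(\mathscr Y_s,\mathscr Y'_s\) are normal and integral. The morphism \(\pi_s^{-1}(\mathscr V_s)\to\mathscr V_s\) is étale. Finally, \(\mathscr F_s\) is \(p(s)\)-closed.

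On \(\pi_s^{-1}(\mathscr V_s)\), étaleness gives \(T_{\mathscr Y'_s}=\pi_s^*T_{\mathscr Y_s}\). Moreover, the pulled-back foliation has tangent sheaf \(\pi_s^*\mathscr F_s\) there, since the pull-back of the defining form has kernel \(\pi_s^*\mathscr F_s\). Every local vector field \(v\) on \(\pi_s^{-1}(\mathscr V_s)\) in \(\pi_s^*\mathscr F_s\) is étale-locally an \(\mathcal O\)-linear combination of lifts \(\tilde w_i\) of sections \(w_i\) of \(\mathscr F_s\). Here the lift \(\tilde w\) is the unique derivation extending \(w\) along an étale map, and we have \(\widetilde{w^p}=\tilde w^p\) by uniqueness of the lift. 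So \(\tilde w^p\) lies in the pulled-back foliation. To pass to linear combinations, use the \(p\)-linearity of the \(p\)-curvature in Definition~\ref{pcurvature}: \(\psi\) is \(\mathcal O\)-linear on \(\operatorname{Frob}^*T\), so it vanishes as soon as it vanishes on a set of local generators. Hence the pulled-back foliation restricted to \(\pi_s^{-1}(\mathscr V_s)\) is \(p(s)\)-closed. Lemma~\ref{pcloseopenpositivechar} then extends this to all of \(Y'_s\). Note that the saturated extension of the pull-back is determined by its restriction to the open subset, by Remark~\ref{uniqueopensubset}.

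The main obstacle is compatibility of the spreading out. We must check that the generic fibre of the spread-out pull-back is \(\pi^{-1}\mathcal F\), and that after shrinking \(S\) its reduction at \(s\) agrees with \(\pi_s^{-1}\mathscr F_s\) on the open set. Both are saturated subsheaves that agree with \(\pi^*\mathscr F\) over \(\pi^{-1}(\mathscr V)\), so this reduces to generic flatness and the constructibility of the locus where saturation commutes with base change. After shrinking \(S\) these agree, and this completes the argument.
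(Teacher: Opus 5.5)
Your proposal is correct and follows essentially the same route as the paper. Both arguments spread out, restrict to a dense open $V$ over which $\pi$ is étale and $\mathcal F$ is regular, and note that on $\Pi_s^{-1}(\mathscr V_s)$ the pulled-back foliation is the étale pull-back, whose $p$-curvature is the pull-back of $\psi_{\mathscr F_s}=0$; both then conclude with Lemma~\ref{pcloseopenpositivechar}. Your lifting argument ($\widetilde{w^p}=\tilde w^{\,p}$ together with the $p$-linearity of $\psi$) and your check that the reduction of the spread-out pull-back agrees with $\Pi_s^*\mathscr F_s$ on the étale locus simply spell out two points the paper states without detail.
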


\begin{proof}
Choose a subfield \(K\subset\mathbb C\), finitely generated over
\(\mathbb Q\), over which \(Y\), \(Y'\), \(\pi\), \(\mathcal F\), and
\(\mathcal F'\) are defined. Choose a finitely generated integral
\(\mathbb Z\)-subalgebra \(R\subset K\) and spread out these data over
\(
S:=\operatorname{Spec}R.
\)
After replacing \(R\) by a localisation, we obtain normal integral
models
\(
\mathscr Y\longrightarrow S,
\mathscr Y'\longrightarrow S,
\)
a finite morphism
\(
\Pi:\mathscr Y'\longrightarrow\mathscr Y
\)
with generic fibre \(\pi\), and foliations
\(
\mathscr F\subset T_{\mathscr Y/S},
\mathscr F'\subset T_{\mathscr Y'/S},
\)
whose generic fibres are \(T_{\mathcal F}\) and \(T_{\mathcal F'}\).

Since \(\pi\) is quasi-étale, there is a nonempty open subset
\(V\subset Y\), whose complement has codimension at least two, such that
\(V\) is smooth and
\(
\pi^{-1}(V)\longrightarrow V
\)
is étale. Shrinking \(V\) further does not affect the argument, so we may
also assume that \(\mathcal F\) is regular on \(V\).

After localising \(R\) once more, \(V\) spreads out to an open subset
\(\mathscr V\subset\mathscr Y\) such that, for every closed point \(s\)
under consideration, the fibre \(\mathscr V_s\) is nonempty and smooth,
and
\(
\Pi_s^{-1}(\mathscr V_s)\longrightarrow\mathscr V_s
\)
is étale. We may moreover assume that \(\mathscr Y_s\) and
\(\mathscr Y'_s\) are normal and integral and that \(\mathscr F_s\) is
\(p(s)\)-closed for every closed point \(s\) in a nonempty open subset of
\(S\).

Fix such a closed point \(s\), and set
\(
V_s':=\Pi_s^{-1}(\mathscr V_s).
\)
On \(V_s'\), the saturated pull-back foliation agrees with the ordinary
étale pull-back:
\[
\mathscr F'_s|_{V_s'}
\simeq
\Pi_s^*(\mathscr F_s|_{\mathscr V_s}).
\]
Under the étale identifications of the tangent bundles and the tangent
sheaves of the foliations, the \(p\)-curvature map commutes with
pull-back. Consequently,
\[
\psi_{\mathscr F'_s}|_{V_s'}
=
\Pi_s^*
\bigl(
\psi_{\mathscr F_s}|_{\mathscr V_s}
\bigr)
=
0.
\]
Thus \(\mathscr F'_s\) is \(p(s)\)-closed on the nonempty open subset
\(V_s'\).

By Lemma~\ref{pcloseopenpositivechar}, \(p(s)\)-closedness on the normal
integral variety \(\mathscr Y'_s\) may be checked on any nonempty open
subset. Hence \(\mathscr F'_s\) is \(p(s)\)-closed on all of
\(\mathscr Y'_s\).

This holds for every closed point in a nonempty open subset of \(S\).
Therefore \(\mathcal F'\) is \(p\)-closed for almost all primes.
\end{proof}

\begin{definition}\label{generalhypersurfaceinlocalmodel}
Let \(X\) be a normal variety and let \(\mathcal F\) be a foliation on \(X\).
Suppose that \(\mathcal F\) is analytically \(p\)-closed at a closed point \(x\in X\),
and let \((Y,\mathcal G,y)\) be a local \(p\)-closed model of
\((X,\mathcal F,x)\).

A \emph{general hypersurface in the local \(p\)-closed model} is an analytic hypersurface
\(H\) in a sufficiently small analytic neighbourhood of \(x\) obtained as follows. Choose
an analytic isomorphism between neighbourhoods of \(x\in X^{\mathrm{an}}\) and
\(y\in Y^{\mathrm{an}}\) identifying \(\mathcal F\) and \(\mathcal G\). If
\(S\subset Y\) is a sufficiently general algebraic hypersurface through \(y\), then
\(H\) is the inverse image of \(S^{\mathrm{an}}\) under this analytic isomorphism.
\end{definition}

\begin{lemma}\label{hypersurfacesectionpclosed}
Let \(X\) be a normal variety, and let \(\mathcal F\) be a foliation on \(X\)
which is analytically \(p\)-closed at a closed point \(x\in X\). Let \(H\) be a general
hypersurface in a local \(p\)-closed model of \((X,\mathcal F)\) at \(x\). Then the
restricted foliation \(\mathcal F|_H\) is analytically \(p\)-closed at \(x\).
\end{lemma}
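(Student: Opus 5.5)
The plan is to show that the algebraic germ \((S,\mathcal G|_S,y)\) is itself a local \(p\)-closed model of \((H,\mathcal F|_H,x)\). The chosen analytic isomorphism identifies \(H\) with \(S^{\mathrm{an}}\) and \(\mathcal F\) with \(\mathcal G\). It therefore identifies the twisted defining forms up to a unit, and hence the restricted forms \(\omega_{\mathcal F}|_H\) and \(\omega_{\mathcal G}|_S\). Consequently condition (1) of Definition~\ref{definitionpclosed} holds automatically, and everything reduces to an algebraic statement: if \(\mathcal G\) is \(p\)-closed for almost all primes and \(S\subset Y\) is a sufficiently general hypersurface through \(y\), then \(\mathcal G|_S\) is \(p\)-closed for almost all primes. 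Genericity of \(S\) is used to ensure that \(S\) is normal, that \(\omega_{\mathcal G}|_S\not\equiv0\), and that \(S\) meets the open set where everything is regular and transverse.

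\textbf{Step 1.} Choose a nonempty Zariski open \(V\subset Y\) with \(Y\) smooth and \(\mathcal G\) regular on \(V\). Take \(S\) general, so that \(S\cap V\neq\varnothing\) and, on a dense open subset \(W\subset S\cap V\), the foliation \(\mathcal G\) is transverse to \(S\). Concretely, \(T_S+T_{\mathcal G}=T_Y\) along \(W\) when \(\operatorname{rank}\mathcal G\ge1\); if some leaf direction were forced to lie in \(S\), one would instead use \(\omega_{\mathcal G}|_S\not\equiv0\). On \(W\), the restricted foliation has tangent sheaf \(T_{\mathcal G|_S}=T_{\mathcal G}\cap T_S\), since the form restriction defines exactly the foliation whose leaves are the intersections of the leaves with \(S\).

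\textbf{Step 2.} Spread out \(Y,\mathcal G,S,V,W\) over a finitely generated \(\mathbb Z\)-algebra \(R\) so that all of these properties persist on closed fibres over a nonempty open set of \(\operatorname{Spec}R\). Since \(S\) may be taken defined over a finitely generated field (the generality conditions are Zariski open in a linear system), we may enlarge \(K\) accordingly. The independence Lemma~\ref{independenceofspreadingout} allows us to use this particular model.

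\textbf{Step 3.} Let \(s\) be a closed fibre. If \(v\) is a local section of \(T_{\mathcal G_s}\cap T_{S_s}\), then \(v\) is tangent to \(S_s\), that is, it preserves the ideal of \(S_s\). Hence \(v^p\) also preserves that ideal, so \(v^p\in T_{S_s}\). Moreover \(v^p\in T_{\mathcal G_s}\) by \(p\)-closedness. Therefore \(v^p\in T_{\mathcal G_s|_{S_s}}\) on \(W_s\). By Lemma~\ref{pcloseopenpositivechar}, \(\mathcal G_s|_{S_s}\) is \(p\)-closed on all of \(S_s\), which gives the result.

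The main obstacle is Step 1. One must identify the restricted foliation, defined via \(\omega|_S\), with \(T_{\mathcal G}\cap T_S\) on a dense open set. One must also make sure that \(S\) stays normal and that this identification survives the reduction modulo \(p\). I would handle both by working entirely on the regular transverse locus. There \(\mathcal G\) is given by local coordinates \((z_1,\dots,z_n)\) with leaves \(\{z_{r+1},\dots,z_n=\text{const}\}\) and \(S=\{h=0\}\) transverse. In these coordinates both descriptions visibly coincide, and this description spreads out over the base.
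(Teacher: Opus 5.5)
Your proof is correct and has the same structure as the paper's proof. Both show that \((S,\mathcal G|_S,y)\) is itself a local \(p\)-closed model of \((H,\mathcal F|_H,x)\), which reduces everything to one algebraic claim: restricting a foliation that is \(p\)-closed for almost all primes to a sufficiently general hypersurface preserves that property. The routes differ only in how this claim is obtained. The paper cites \cite[Lecture~3, 1.10]{MiyaokaPeternell1997}. You prove it directly: on the regular transverse locus, a field \(v\in T_{\mathcal G_s}\) preserving the ideal of \(S_s\) has \(v^p\) preserving it too, and you then extend to all of \(S_s\) by Lemma~\ref{pcloseopenpositivechar}. Your version is self-contained and uses only tools already in the paper; the citation buys brevity.

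Two points in your write-up need tightening.

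First, Step~3 treats sections of \(T_{\mathcal G_s}\cap T_{S_s}\) as ambient vector fields. To conclude that \(\mathcal G_s|_{S_s}\) is \(p\)-closed, you need every local section of \(T_{\mathcal G_s}|_{S_s}\cap T_{S_s}\) over \(W_s\) to be the restriction of a field in \(T_{\mathcal G_s}\) that preserves the ideal \((h)\) of \(S_s\). Transversality gives this. Take a local frame \(e_1,\dots,e_r\) of \(T_{\mathcal G_s}\) with \(e_r(h)\) a unit. The fields \(e_i-\bigl(e_i(h)/e_r(h)\bigr)e_r\), for \(i<r\), lie in \(T_{\mathcal G_s}\) and annihilate \(h\). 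Their restrictions form a frame of \(T_{\mathcal G_s}|_{S_s}\cap T_{S_s}\), so any section lifts after lifting its coefficients.

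Second, in your last paragraph the analytic Frobenius coordinates \((z_1,\dots,z_n)\) are in general not algebraic, so they do not spread out over a finitely generated \(\mathbb Z\)-algebra. You do not need them. At a point of the regular locus, \(\omega_{\mathcal G}|_S\) is nonzero exactly when \(T_{\mathcal G}\not\subset T_S\). Its kernel in \(T_S\) is then \(T_{\mathcal G}|_S\cap T_S\). This is an equality of algebraic subbundles of \(T_S\) over \(W\), and it does persist on the closed fibres over a nonempty open subset of the base.
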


\begin{proof}
Choose a local \(p\)-closed model \((Y,\mathcal G,y)\) of \((X,\mathcal F,x)\) and an
analytic isomorphism of germs identifying \((X,\mathcal F,x)\) with
\((Y,\mathcal G,y)\), such that \(H\) corresponds to \(S^{\mathrm{an}}\), where
\(S\subset Y\) is a sufficiently general algebraic hypersurface through \(y\).

By the generality of \(S\), the restricted foliation \(\mathcal G|_S\) is defined. Since
\(\mathcal G\) is \(p\)-closed for almost all primes, it follows from
\cite[Lecture~3, 1.10]{MiyaokaPeternell1997} that \(\mathcal G|_S\) is also
\(p\)-closed for almost all primes. Therefore \((S,\mathcal G|_S,y)\) is a local
\(p\)-closed model of \((H,\mathcal F|_H,x)\). Hence \(\mathcal F|_H\) is analytically
\(p\)-closed at \(x\).
\end{proof}

\section{Holomorphic first integrals under canonical singularities}
\label{canonicalintegrabilitysection}

This section proves Theorem~\ref{theoremA}. We begin with the smooth surface
case, pass to canonical foliations on klt surface germs by a finite quasi-\'etale
cover, and then treat the higher-dimensional canonical cases by transverse surface
sections.

\subsection{The smooth surface case}

The following formal linearisation result is the main input in the proof of
Theorem~\ref{canonicalcases}\textup{(1)}.

\begin{proposition}[{\cite[Proposition II.1.3]{McQuillan2008}}]\label{mcq08}
Let $X$ be smooth and let $\mathcal F$ be a rank-one foliation with log canonical
singularities which is $p$-closed for almost all primes. For every
$z\in\operatorname{Sing}(\mathcal F)$, there are formal coordinates
$x_1,\ldots,x_s$, where $s\le\dim X$, and integers $\lambda_1,\ldots,\lambda_s$ such
that $\mathcal F$ is formally generated by
\[
\sum_{i=1}^s\lambda_i x_i\frac{\partial}{\partial x_i}.
\]
\end{proposition}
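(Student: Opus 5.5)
The plan is to work formally at \(z\) and use \(p\)-closedness to force both semisimplicity and rational eigenvalues of a local generator. Choose a local generator \(v\) of \(T_{\mathcal F}\) near \(z\), defined over a finitely generated ring \(R\) after spreading out. Log canonical singularities of a rank-one foliation (McQuillan) give that the linear part \(Dv(z)\) is not nilpotent. More precisely, there is a generator whose linear part has a nonzero eigenvalue. The first step is to take the formal Jordan–Chevalley (Poincaré–Dulac) decomposition \(v=v_S+v_N\) in \(\widehat{\mathcal O}_{X,z}\). Here \(v_S\) is formally diagonalisable, \(v_S=\sum\mu_i x_i\partial_{x_i}\) in suitable formal coordinates, \(v_N\) is formally nilpotent, and \([v_S,v_N]=0\). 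All resonant monomials of \(v_N\) are then governed by the eigenvalues \(\mu_i\).

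The second step uses \(p\)-closedness. Modulo \(p\), we have \(v^p=f_p v\) for some function \(f_p\). Evaluating linear parts at the singular point gives \(A^p\equiv f_p(z)A\) with \(A=Dv(z)\). Via the Jordan decomposition of \(A\), and arguing as in the torus example of \S3.3, this forces three things for almost all \(p\). First, the nilpotent part of \(A\) vanishes mod \(p\), hence is zero. Second, the nonzero eigenvalues satisfy \(\mu_i^p\equiv c\,\mu_i\) for a common \(c\), so all ratios \(\mu_i/\mu_j\) satisfy \(\lambda^{p-1}\equiv1\) for almost all \(p\); by Chebotarev/Kronecker-type density, these ratios lie in \(\mathbb Q\). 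After rescaling \(v\), the \(\mu_i\) become integers \(\lambda_i\). The coordinates with \(\mu_i=0\) are dropped from the sum, which accounts for \(s\le\dim X\).

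The third step, and the main obstacle, is to kill the formal nilpotent part \(v_N\) together with the non-semisimple higher-order resonant terms. I would normalise \(v=u\,(v_S+v_N)\), with \(u\) a unit, and argue order by order. Suppose the first nonzero resonant term of \(v_N\) has degree \(k\). Compute \(v^p\) modulo \(p\) and modulo \(\mathfrak m^{k+1}\). Since \(v_S\) has integer eigenvalues, \(v_S^p\equiv v_S\), while the commuting nilpotent part contributes \(v_N^p\) plus cross terms. Comparing with \(f_pv\) shows that the lowest resonant term must vanish mod \(p\) for infinitely many \(p\), and hence vanishes. The delicate point is that the unit \(u\) and the coordinate changes must be chosen compatibly with reduction mod \(p\) for infinitely many \(p\). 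The denominators of formal normalising coordinates involve only the small divisors \(\langle a,\lambda\rangle-\lambda_i\), which are integers here, so only finitely many primes are needed to reach each fixed order. The log canonical hypothesis is used again to exclude saddle-node-type terms: it prevents the unit \(u\) from absorbing a non-removable factor of the form \(1+x^{a}\) in directions where \(\lambda_i=0\). A further point to check is that a formal unit multiple of \(v_S\) defines the same foliation, and that the vanishing of all higher-order obstructions yields a formal generator exactly \(\sum\lambda_ix_i\partial_{x_i}\).
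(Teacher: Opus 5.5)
The paper gives no proof of its own here. It imports the statement from McQuillan's Proposition~II.1.3 and only remarks that the canonical-case argument extends to log canonical singularities.

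Your Steps~1 and~2 are sound. Log canonicity gives a non-nilpotent linear part \(A\). The relation \(A^p=f_p(z)A\) then forces \(f_p(z)\neq0\) for almost all \(p\), kills the nilpotent part of \(A\), and makes the eigenvalue ratios rational. For transcendental ratios you need a specialisation argument rather than Chebotarev, but that is routine.

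The genuine gap is Step~3, which is the real content of the proposition, and as written it does not work.
\begin{itemize}
\item \emph{The comparison does not kill \(v_N\).} Comparing with \(f_pv\) does not force the lowest resonant term of \(v_N\) to vanish. The field \(w=(1+x_1x_2)(x_1\partial_{x_1}-x_2\partial_{x_2})\) generates a \(p\)-closed foliation, yet its formal Jordan decomposition is \(w_S=x_1\partial_{x_1}-x_2\partial_{x_2}\), \(w_N=x_1x_2\,w_S\neq0\). The comparison only shows that \(v_N\) is a multiple of \(v_S\) modulo \(\mathfrak m^{k+1}\) on the closed fibres. Your unit \(u\) is presumably meant to absorb this, but you never say so or say how \(u\) is chosen.
\item \emph{The computation of \(v^p\) is not carried out.} For \(u(v_S+v_N)\) there are genuine cross terms coming from \(u\). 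For \(v_S+v_N\) itself there are none, since \([v_S,v_N]=0\) gives \((v_S+v_N)^p=v_S^p+v_N^p\) in characteristic \(p\). The fact that makes the comparison work is never stated: \(v_N^p\) acts as zero on \(\widehat{\mathcal O}/\mathfrak m^{k+1}\) once \(p\) exceeds its dimension, because \(v_N\) is nilpotent on every jet space.
\item \emph{Log canonicity is not what excludes saddle-node terms.} The saddle node \(x^2\partial_x+y\partial_y\) is canonical, with \(v_S=y\partial_y\) and \(v_N=x^2\partial_x\). It is excluded by \(p\)-closedness, because \(v_N\wedge v_S=x^2y\,\partial_x\wedge\partial_y\neq0\). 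Log canonicity is used only to ensure \(v_S\neq0\).
\end{itemize}

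The repair needs no order-by-order normalisation of units or coordinates. Let \(w\) be your algebraic generator rescaled to have integer eigenvalues, and let \(w=w_S+w_N\) be its formal Jordan decomposition. Fix \(d\), and localise \(R\) so that the finitely many denominators of the \(d\)-jets of \(w_S\), \(w_N\) and of the linearising coordinates of \(w_S\) become units. Put \(A_d:=\widehat{\mathcal O}/\mathfrak m^{d+1}\). On the closed fibres, \(w_S^p=w_S\) on \(A_d\), since it is diagonal on monomials with integer weights. Also \(w_N^p=0\) on \(A_d\) for \(p>\dim A_d\). Since \(w_S\) and \(w_N\) commute, this gives \(w^p\equiv w_S\).

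\(p\)-closedness then gives \(w_S\equiv f_p\,(w_S+w_N)\) modulo \(\mathfrak m^{d+1}\), and comparing linear parts gives \(f_p(z)=1\). Hence \(w_N\wedge w_S\equiv0\) modulo \(\mathfrak m^{d+1}\) on almost all closed fibres, and therefore also in characteristic zero. Letting \(d\to\infty\) gives \(w_N\wedge w_S=0\).

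Write \(w_S=\sum_{i\le s}\lambda_ix_i\partial_{x_i}\) with these \(\lambda_i\neq0\). Saturation forces \(s\ge2\); otherwise \(w\) would be a non-unit multiple of \(\partial_{x_1}\). Coprimality of \(x_1\) and \(x_2\) in \(\widehat{\mathcal O}\) then gives \(w_N=g\,w_S\) with \(g\in\widehat{\mathcal O}\). Moreover \(g(0)=0\), because the linear part of \(w_N\) is the nilpotent part of \(A\), which vanishes. Thus \(w=(1+g)\,w_S\) with \(1+g\) a formal unit, which is exactly the assertion.
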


\begin{remark}
McQuillan states the result in \cite{McQuillan2008} under the assumption of
canonical singularities; the same proof applies to log canonical singularities.
\end{remark}

We first exclude saddle-nodes and then prove Theorem~\ref{canonicalcases}\textup{(1)}.

\begin{lemma}\label{pclosednotsaddle}
Let $X$ be a smooth surface, and let $\mathcal F$ be a
foliation on $X$ with log canonical singularities. If $\mathcal F$ is analytically
$p$-closed at a closed point $x$, then $x$ is not a saddle-node of $\mathcal F$.
\end{lemma}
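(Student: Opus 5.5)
We argue by contradiction: suppose \(x\) is a saddle-node of \(\mathcal F\) and \(\mathcal F\) is analytically \(p\)-closed at \(x\). Choose a local \(p\)-closed model \((Y,\mathcal G,y)\) of \((X,\mathcal F,x)\). Being a saddle-node is an analytic invariant of the germ: the linear part of a local generator, up to multiplication by a unit, has one zero and one nonzero eigenvalue, and this is preserved under analytic isomorphisms. Hence \(y\) is a saddle-node of \(\mathcal G\). Log canonicity at \(y\) is also analytically local, since it can be detected by blow-ups over \(y\) and the discrepancy computation is local. Moreover, \(Y\) is smooth near \(y\), being analytically isomorphic to a germ of a smooth surface. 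After replacing \(Y\) by a Zariski open neighbourhood of \(y\), which is allowed by Lemma~\ref{pclosedopencondition}, we may therefore assume that \(Y\) is a smooth surface, that \(\mathcal G\) is \(p\)-closed for almost all primes, and that \(\mathcal G\) has log canonical singularities near \(y\). If log canonicity over all of \(Y\) is needed, we shrink \(Y\) further, since the non-log-canonical locus is closed and does not contain \(y\).

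Next we apply Proposition~\ref{mcq08}, in the log canonical version given by the subsequent remark, at \(z=y\). This gives formal coordinates \(x_1,\dots,x_s\) with \(s\le 2\) and integers \(\lambda_i\) such that \(\mathcal G\) is formally generated by \(\sum_i\lambda_i x_i\partial_{x_i}\). The formal generator and the analytic generator differ by a formal unit. So the linear part of the analytic generator at \(y\) is conjugate, up to a nonzero scalar, to the linear part of this diagonal field. Since \(y\) is singular, not all \(\lambda_i x_i\partial_{x_i}\) vanish, and the generator is saturated, so it has no common factor.

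We now rule out each possibility. If both eigenvalues are nonzero, then \(y\) is not a saddle-node, a contradiction. Otherwise exactly one eigenvalue is nonzero. In that case the formal generator is \(\lambda_1x_1\partial_{x_1}\) (with \(s=1\), or with \(\lambda_2=0\)). This vanishes along the whole divisor \(x_1=0\), so it is not saturated and the singular locus would not be isolated, a contradiction. Equivalently, after saturation the generator becomes \(\partial_{x_1}\), so \(y\) would be a regular point. In every case a saddle-node is impossible. A saddle-node is formally \emph{not} linearisable, since its formal normal form \(x^{k+1}\partial_x+y(1+\nu x^k)\partial_y\) with \(k\ge1\) is nonlinear. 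So the contradiction can also be phrased as: a saddle-node admits no formal linear generator.

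The main obstacle is to make the comparison between the formal normal form in Proposition~\ref{mcq08} and the saddle-node condition rigorous. Concretely, we must check that "formally generated by a diagonal linear field" is incompatible with having exactly one zero eigenvalue at an isolated singularity. The cleanest route is through the formal invariant: the multiplicity of the centre manifold direction, which is \(k+1\ge2\) for a saddle-node, equals the vanishing order along the zero-eigenvalue direction. This multiplicity is invariant under formal changes of coordinates and multiplication by units. For a linear diagonal field it is \(1\) or the field is non-saturated. We would cite the standard formal classification of saddle-nodes (Martinet--Ramis / Dulac normal form) for this invariance.
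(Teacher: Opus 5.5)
Your proof is correct and follows essentially the same route as the paper. You pass to a local $p$-closed model, apply McQuillan's diagonal normal form (Proposition~\ref{mcq08}) at $y$, and use saturation of a rank-one foliation on a smooth surface (its isolated singular locus) to rule out a zero weight, so the linear part has two nonzero eigenvalues. Your care in transferring smoothness, log canonicity and the saddle-node condition to the model is a useful addition that the paper leaves implicit.

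The closing appeal to the Dulac/Martinet--Ramis normal form and Milnor-number invariance is unnecessary. Comparing linear parts up to a formal unit and formal conjugation, as you already do in the preceding paragraph, completes the argument.
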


\begin{proof}
Choose a local $p$-closed model $(Y,\mathcal G,y)$ of $(X,\mathcal F,x)$. Applying
Proposition~\ref{mcq08} at $y$, we obtain formal coordinates in which the foliation is
generated by a diagonal vector field with integral weights. Since a saturated rank-one
foliation on a smooth surface has singular locus of codimension at least two, its
singularity at $y$ is isolated. Consequently both coordinates occur with nonzero
weights: if one weight vanished, the formal singular locus would contain a divisor.
Thus the linear part has two nonzero eigenvalues, and the singularity cannot be a
saddle-node.
\end{proof}
\begin{proof}[Proof of Theorem~\ref{canonicalcases}\textup{(1)}]

We may assume that $x\in \operatorname{Sing}(\mathcal F)$, since if $\mathcal F$ is
regular at $x$, then the theorem follows directly from the Frobenius theorem.

Assume that $\mathcal F$ is analytically $p$-closed at $x$. Then there exist a normal
variety $Y$, a closed point $y\in Y$, and a foliation $\mathcal G$ on $Y$ such that
$(Y,\mathcal G,y)$ is analytically isomorphic to $(X,\mathcal F,x)$, and such that
$\mathcal G$ is $p$-closed for almost all primes $p$. By Proposition~\ref{mcq08}, there
exist formal coordinates $(\hat u,\hat v)$ at $x$ such that $\mathcal F$ is formally
generated by
\[
\delta=\eta \hat u\partial_{\hat u}+\mu \hat v\partial_{\hat v},
\]
where $\eta,\mu\in\mathbb Z$. By Lemma~\ref{pclosednotsaddle}, we have $\eta\mu\neq0$.

We first exclude the case $\eta\mu>0$. In this case the quotient
$\eta/\mu$ is a positive real number. Since the foliation is already formally
linearized by Proposition~\ref{mcq08}, Theorem~\ref{Poincare} shows that this
formal linearisation is convergent. Thus, in analytic coordinates, the foliation is
generated by
\[
\eta u\frac{\partial}{\partial u}
+
\mu v\frac{\partial}{\partial v}.
\]
Such a singularity is not canonical by
\cite[Fact I.i.19]{McQuillan2008}, a contradiction. Hence $\eta\mu<0$.

Replacing $\delta$ by $-\delta$ if necessary, we may assume that $\eta<0$ and $\mu>0$.
After dividing the two weights by their greatest common divisor, set
$a:=\mu/\gcd(\mu,-\eta)$ and $b:=-\eta/\gcd(\mu,-\eta)$. Then $a,b>0$ are
coprime, and in the formal coordinates $(\hat u,\hat v)$ the foliation is generated
by
\(
-b\hat u\partial_{\hat u}+a\hat v\partial_{\hat v}.
\)
Equivalently, it is defined by the formal $1$-form
$a\hat v\,d\hat u+b\hat u\,d\hat v$. Therefore
$\hat f:=\hat u^a\hat v^b$ is a formal first integral of $\mathcal F$, since
\[
d\hat f
=
\hat u^{a-1}\hat v^{b-1}
\big(a\hat v\,d\hat u+b\hat u\,d\hat v\big).
\]

By \cite[Théorème A]{MatteiMoussu1980}, after composing $\hat f$ with a
one-variable formal change of coordinate, the formal first integral becomes
convergent. Thus there exists
$\varphi\in \widehat{\mathcal O}_{\mathbb C,0}$ with $\varphi(0)=0$ and
$\varphi'(0)\neq0$ such that $F:=\varphi(\hat f)$ is a holomorphic first integral of
$\mathcal F$.

Since $\varphi(0)=0$ and $\varphi'(0)\neq0$, we can write
$\varphi(t)=t\psi(t)$ with $\psi(0)\neq0$. Hence
\[
F=\varphi(\hat f)
 =\hat f\psi(\hat f)
 =\hat u^a\hat v^b\psi(\hat u^a\hat v^b).
\]
Thus
\(
F=\hat u^a\hat v^b\widehat\varepsilon
\)
for some formal unit $\widehat\varepsilon\in\widehat{\mathcal O}_{X,x}^{\times}$.

The formal curves $\{\hat u=0\}$ and $\{\hat v=0\}$ are the two formal separatrices of
$\mathcal F$. Since $\mathcal F$ has canonical singularities, these formal separatrices
are convergent by \cite[Appendix~II]{MatteiMoussu1980}. Moreover, they are formally
smooth and transverse. Hence the corresponding analytic separatrices are smooth and
transverse. After an analytic change of coordinates, we may therefore assume that they
are given by $u=0$ and $v=0$.

Since $X$ is smooth at $x$, the local ring $\mathcal O_{X,x}$ is a UFD. The holomorphic
first integral $F$ has, formally, the factors corresponding to the two separatrices with
multiplicities $a$ and $b$. Hence, by unique factorization in $\mathcal O_{X,x}$, we can
write
\(
F=u^a v^b\varepsilon
\)
for some holomorphic unit $\varepsilon\in\mathcal O_{X,x}^{\times}$.

After shrinking the analytic neighbourhood of $x$, choose a holomorphic $b$-th root
$\varepsilon^{1/b}$. Set $z=u$ and $w=v\varepsilon^{1/b}$. Then
\(
F=z^a w^b.
\)
Thus \(F\) is a holomorphic first integral near \(x\). Therefore $\mathcal F$ is
analytically integrable at $x$ in the sense of
Definition~\ref{definitionanalyticintegral}.

Conversely, suppose that $\mathcal F$ is analytically integrable at $x$. By definition,
there exist a normal algebraic variety $Y$, a closed point $y\in Y$, and a morphism
$f:Y\to \mathbb A^1$ such that $(Y,\mathcal G,y)$ is analytically isomorphic to
$(X,\mathcal F,x)$, where $\mathcal G$ is the foliation induced by $f$. It is enough to
show that $\mathcal G$ is $p$-closed for almost all primes $p$.

After spreading out, we may assume that \(Y\), \(f\), and \(\mathcal G\) are defined over
a finitely generated \(\mathbb Z\)-algebra. For all closed fibres outside a finite set of
primes, the reduction of \(\mathcal G\) is still the foliation induced by the reduction
of \(f\). On the open set where \(df\neq0\), this foliation is given by
\(
T_{\mathcal G}=\ker(df).
\)
Hence, for any local vector field \(v\in T_{\mathcal G}\), we have \(v(f)=0\). In characteristic
\(p\), this implies
\[
v^p(f)=v^{p-1}(v(f))=0.
\]
Therefore \(v^p\in\ker(df)=T_{\mathcal G}\) on this open set. Since \(T_{\mathcal G}\) is
saturated, the inclusion \(v^p\in T_{\mathcal G}\) extends across the complement. Thus the
reduction of \(\mathcal G\) is \(p\)-closed for almost all primes \(p\). Hence
\(\mathcal F\) is analytically \(p\)-closed at \(x\).

Moreover, the preceding argument gives holomorphic coordinates $z,w$ at $x$ for
which $F=z^aw^b$. Since
\[
dF=z^{a-1}w^{b-1}(aw\,dz+bz\,dw),
\]
the foliation is generated by $aw\partial_w-bz\partial_z$. Hence it is
holomorphically linearizable.

\end{proof}

\subsection{Canonical foliations on klt surface germs}

The smooth surface result also gives the canonical klt surface case after passing to
a finite quasi-\'etale smooth cover.

\begin{proof}[Proof of
Theorem~\ref{canonicalcases}\textup{(2)}]

Choose a local \(p\)-closed model
\(
(X_0,\mathcal F_0,x_0)
\)
of \((X,\mathcal F,x)\). Thus \(\mathcal F_0\) is \(p\)-closed for
almost all primes, and there is an analytic isomorphism of foliated germs
\(
(X_0,\mathcal F_0,x_0)
\simeq
(X,\mathcal F,x).
\)
In particular, \(X_0\) is klt at \(x_0\), and \(\mathcal F_0\) has
canonical singularities there.

A klt surface singularity is a quotient singularity; see
\cite[Proposition~4.18]{KollarMori1998}. We use the étale-local algebraic
form of this statement. After replacing \((X_0,x_0)\) by an algebraic
étale neighbourhood
\(
\tau:(X_1,x_1)\longrightarrow(X_0,x_0),
\)
there exists a finite Galois quasi-étale morphism
\(
q:(S,s)\longrightarrow(X_1,x_1)
\)
such that \(S\) is smooth and
\(
q^{-1}(x_1)=\{s\}.
\)
In particular, the Galois group fixes \(s\).

Set
\(
\mathcal F_1:=\tau^{-1}\mathcal F_0,
\mathcal H:=q^{-1}\mathcal F_1.
\) We first check the arithmetic hypothesis on \(\mathcal H\). The foliation
\(\mathcal F_1\) is \(p\)-closed for almost all primes. Indeed, after
spreading out \(\tau\) together with \(\mathcal F_0\), the reductions of
\(\tau\) are étale over a nonempty open subset of the arithmetic base.
On every such closed fibre, the \(p\)-curvature of the pulled-back
foliation is the étale pull-back of the \(p\)-curvature of
\(\mathcal F_0\), and hence vanishes.

Lemma~\ref{quasietalepreservepclosed} now implies that
\(\mathcal H\) is \(p\)-closed for almost all primes. Consequently, the
algebraic germ
\(
(S,\mathcal H,s)
\)
is itself a local \(p\)-closed model. By
Definition~\ref{definitionpclosed}, \(\mathcal H\) is therefore
analytically \(p\)-closed at \(s\).

Étale pull-back preserves canonical singularities, and canonical
singularities of foliations are also preserved by finite quasi-étale
pull-back; see
\cite[Corollary~III.i.5]{McQuillanPanazzolo2013}. Hence
\(\mathcal H\) has canonical singularities at \(s\). Since \(S\) is
smooth, Theorem~\ref{canonicalcases}\textup{(1)} gives a nonconstant
holomorphic first integral
\(
f:(S,s)\longrightarrow(\mathbb C,0)
\)
of \(\mathcal H\). After subtracting a constant, we may assume that
\(f(s)=0\).

Let \(\Gamma\) be the Galois group of \(q\), and define the holomorphic
germ
\[
\widetilde f
:=
\prod_{\gamma\in\Gamma} f\circ\gamma.
\]
Every deck transformation preserves \(\mathcal H\), and hence every
factor \(f\circ\gamma\) is a first integral of \(\mathcal H\). It follows
that \(\widetilde f\) is also a first integral.

The germ \(\widetilde f\) is nonzero because
\(\mathcal O_{S,s}\) is an integral domain and none of the factors is
identically zero. Since every element of \(\Gamma\) fixes \(s\), all the
factors vanish at \(s\). Thus \(\widetilde f(s)=0\), and
\(\widetilde f\) is nonconstant.

The group \(\Gamma\) permutes the factors in the above product, so
\(\widetilde f\) is \(\Gamma\)-invariant. Since \(q\) is a finite Galois
quotient, the identity
\[
\mathcal O_{X_1,x_1}
=
\bigl(q_*\mathcal O_{S,s}\bigr)^\Gamma
\]
shows that \(\widetilde f\) descends to a holomorphic germ
\(
h_1:(X_1,x_1)\longrightarrow(\mathbb C,0)
\)
satisfying \(q^*h_1=\widetilde f\).

On the dense open subset where \(q\) is étale and \(dh_1\neq0\), the
function \(h_1\) is constant along \(\mathcal F_1\). Indeed, its pull-back
\(\widetilde f\) is constant along
\(\mathcal H=q^{-1}\mathcal F_1\). Therefore
\(
T_{\mathcal F_1}\subseteq\ker(dh_1)
\)
on this open subset. Both sheaves have rank one, and hence they are
equal there. Since both define saturated rank-one subsheaves of
\(T_{X_1}\), the equality extends to the whole germ. Thus \(h_1\) is a
holomorphic first integral of \(\mathcal F_1\) at \(x_1\).

Finally, the analytification of the étale morphism \(\tau\) is a local
biholomorphism at \(x_1\). After shrinking analytic neighbourhoods,
\(h_1\) therefore transports to a holomorphic first integral of
\(\mathcal F_0\) at \(x_0\). Transporting once more through the analytic
identification
\(
(X_0,\mathcal F_0,x_0)
\simeq
(X,\mathcal F,x)
\)
gives a holomorphic first integral of \(\mathcal F\) at \(x\).
\end{proof}

\subsection{Higher-dimensional canonical singularities}

We now prove Theorem~\ref{canonicalcases}\textup{(3)} and record its immediate
higher-dimensional consequences.

\begin{lemma}[{\cite[Corollaire C]{MatteiMoussu1980}}]\label{MM80}
Let $\mathcal F$ be a corank one foliation on the analytic germ
$(0\in\mathbb C^n)$, and let
$h:(0\in\mathbb C^m)\to(0\in\mathbb C^n)$ be a holomorphic map with $m<n$.
Suppose that $h$ is transverse to $\mathcal F$, that is,
\[
\operatorname{Sing}(h^*\omega)=h^{-1}(\operatorname{Sing}(\omega)),
\]
and $$\operatorname{codim} (\operatorname{Sing}(h^*\omega))=\min(m, \operatorname{codim}(\operatorname{Sing}(\omega))),$$
where $\omega$ is a local holomorphic $1$-form defining $\mathcal F$, and Then
$\mathcal F$ admits a holomorphic first integral if and only if the pull-back
foliation $h^{-1}\mathcal F$ does.
\end{lemma}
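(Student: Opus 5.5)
The implication from $\mathcal F$ to $h^{-1}\mathcal F$ is formal, and the converse is the content of \cite[Corollaire C]{MatteiMoussu1980}. I would follow the Mattei--Moussu strategy: extend the first integral along the leaves of $\mathcal F$, using the transversality hypothesis to guarantee that leaves meet the image of $h$ in a controlled way.

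\emph{Easy direction.} Suppose $f$ is a holomorphic first integral of $\mathcal F$, so that $df=g\,\omega$ with $g$ holomorphic and $\omega$ a defining $1$-form whose singular set has codimension $\ge 2$. Then $d(f\circ h)=(g\circ h)\,h^*\omega$. By the first transversality condition, $h^*\omega$ vanishes exactly on $h^{-1}(\operatorname{Sing}\omega)$. By the second condition, that set has codimension $\min(m,\operatorname{codim}\operatorname{Sing}\omega)\ge 2$ when $m\ge 2$, so $h^*\omega\not\equiv0$. Hence $f\circ h$ is nonconstant and its level sets are tangent to $h^{-1}\mathcal F$. Since the saturated foliation defined by $h^*\omega$ agrees with the one induced by $f\circ h$ on a dense open set, Remark~\ref{uniqueopensubset} shows that $f\circ h$ induces $h^{-1}\mathcal F$.

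\emph{Hard direction.} First I would reduce to $m=2$. A generic linear $2$-plane inside $\mathbb C^m$ is still transverse to $h^{-1}\mathcal F$ in the above sense. A first integral on $\mathbb C^m$ restricts to one on the plane, so it suffices to treat an embedded transverse surface $\Sigma=h(\mathbb C^2,0)$ carrying a primitive first integral $g$. The plan has three steps.

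\begin{enumerate}[label=\textup{(\roman*)}]
\item Choose a small polydisc $U$ and show that every leaf of $\mathcal F|_{U\setminus\operatorname{Sing}\mathcal F}$ meets $\Sigma$. Transversality makes $\Sigma$ a cross-section of the foliation away from $\operatorname{Sing}\mathcal F$, and the codimension equality ensures that $\Sigma\cap\operatorname{Sing}\mathcal F$ is just the origin. I would use the conic structure of $\operatorname{Sing}\omega$ together with a connectedness argument on the complement of the singular set, as in Mattei--Moussu.
\item Show that the holonomy of $\mathcal F$ along loops in leaves, computed on transversals contained in $\Sigma$, preserves the fibres of $g$. This would make $F(p):=g(\text{point of }L_p\cap\Sigma)$ well defined, where $L_p$ is the leaf through $p$. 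The key input is that the existence of $g$ forces the leaves of $h^{-1}\mathcal F$ to be closed analytic curves in the punctured neighbourhood. The holonomy of $\mathcal F$ around loops linking $\operatorname{Sing}\mathcal F$ can then be read off on $\Sigma$, where it is trivial modulo the fibres of $g$.
\item $F$ is holomorphic off $\operatorname{Sing}\mathcal F$, since locally it is $g$ composed with a holomorphic Frobenius projection onto $\Sigma$. It is bounded, so it extends across $\operatorname{Sing}\mathcal F$ by Riemann extension, using that the singular set has codimension $\ge 2$.
\end{enumerate}

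\emph{Main obstacle.} I expect step (ii), the well-definedness of $F$, to be the hardest part. A leaf of $\mathcal F$ could a priori meet $\Sigma$ in several distinct fibres of $g$. Excluding this requires the Mattei--Moussu topological argument: a Milnor-type fibration of $U\setminus\operatorname{Sing}\mathcal F$ that retracts onto its intersection with $\Sigma$, so that $\pi_1$ of the complement on $\Sigma$ surjects onto $\pi_1$ of the complement in $U$. This is a Lefschetz--Zariski-type statement, and it is exactly where the codimension equality in the transversality hypothesis would be used. I would first try to cite that Lefschetz-type surjectivity directly rather than reprove it.
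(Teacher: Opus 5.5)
Your easy direction and your step (iii) are essentially fine. In the easy direction you should also check $g\circ h\not\equiv0$. Otherwise $f\circ h$ is constant, so $h$ maps into a level set of $f$, which near every point outside $\operatorname{Sing}\omega$ is a finite union of plaques. Then $h^*\omega$ vanishes off $h^{-1}(\operatorname{Sing}\omega)$, contradicting the first transversality condition.

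The genuine gap is step (ii), and the mechanism you propose for your ``main obstacle'' does not close it.
Triviality of holonomy along loops in a leaf $L$ based on $\Sigma$ says nothing about two distinct points $a,b\in L\cap\Sigma$ lying on different leaves of $\mathcal F|_{\Sigma}$. That is exactly the situation that would make $F(p)$ ill defined.

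Surjectivity of $\pi_1(\Sigma\setminus\{0\})\to\pi_1(U\setminus\operatorname{Sing}\mathcal F)$ does not help either, because it is a statement about the ambient complement and does not see the foliation. Holonomy is attached to loops inside leaves and is invariant only under homotopies inside the leaf. Deforming a leafwise loop into $\Sigma$ through $U\setminus\operatorname{Sing}\mathcal F$ therefore tells you nothing about its holonomy, and there is no holonomy ``around loops linking $\operatorname{Sing}\mathcal F$''. Indeed, when $\operatorname{codim}\operatorname{Sing}\mathcal F\ge3$ both fundamental groups are trivial, so your surjectivity is vacuous. The question whether a leaf can meet $\Sigma$ in several leaves of $\mathcal F|_\Sigma$ is left completely untouched.

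The missing ingredient is a \emph{leafwise} Lefschetz-type statement: the Mattei--Moussu transversality theorem underlying the corollary the paper cites (the paper itself gives no proof). It says that for a suitably transverse surface $\Sigma$ there is a fundamental system of neighbourhoods $U$ of $0$ with the following property: every leaf $L$ of $\mathcal F|_{U\setminus\operatorname{Sing}\mathcal F}$ meets $\Sigma$ in a nonempty \emph{connected} set, hence in exactly one leaf of $\mathcal F|_{\Sigma\setminus\{0\}}$.

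Granting this, $F(p):=g(L_p\cap\Sigma)$ is well defined outright, since $g$ is constant on leaves of $\mathcal F|_\Sigma$. Your step (iii) then finishes the argument, and no holonomy computation is needed. The proof of that connectedness has to take place inside the leaves rather than in the ambient complement, and that is where the transversality hypotheses are really used.

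Finally, your reduction treats $h(\mathbb C^2,0)$ as an embedded surface. This is not automatic for a general holomorphic $h$ as in the statement, although it is the situation in which the paper applies the lemma.
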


\begin{proof}[Proof of Theorem~\ref{canonicalcases}\textup{(3)}]
If every component of $\operatorname{Sing}(\mathcal F)$ through $x$ has codimension
at least three, the result follows from \cite[Corollary 1]{CerveauLinsNeto2008}.
We may therefore assume that $x$ is a general point of a codimension two component.

Choose a general smooth surface $S\subset X$ through $x$, transverse to that
component. By \cite[Proposition 3.5]{LorayPereiraTouzet2018}, the restriction
$\mathcal F|_S$ has canonical singularities. Applying
Lemma~\ref{hypersurfacesectionpclosed} successively shows that it is analytically
$p$-closed at $x$. Theorem~\ref{canonicalcases}\textup{(1)} gives a holomorphic first
integral on $S$, and Lemma~\ref{MM80} lifts it to $X$.
\end{proof}

\begin{corollary}\label{canonicaloutsidecodimthree}
Let $X$ be smooth and let $\mathcal F$ be a corank one foliation with canonical
singularities which is $p$-closed for almost all primes. Then there is a
closed subset $Z\subset X$ of codimension at least three such that $\mathcal F$ admits
a local holomorphic first integral at every point of $X\setminus Z$.
\end{corollary}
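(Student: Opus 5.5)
Since \(\mathcal F\) is \(p\)-closed for almost all primes, it is analytically \(p\)-closed at every closed point \(x\in X\): the algebraic germ \((X,\mathcal F,x)\) serves as its own local \(p\)-closed model, as in the proof of Corollary~\ref{pclosebirational}. The plan is to apply Theorem~\ref{canonicalcases}\textup{(3)} at as many points as possible and to collect the remaining points into a closed set of codimension at least three.

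\textbf{Step 1: the regular locus.} On \(X\setminus\operatorname{Sing}(\mathcal F)\) the foliation is regular. The holomorphic Frobenius theorem then gives a local submersion \(f\) with \(\ker df=T_{\mathcal F}\), so a local holomorphic first integral exists at every such point.

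\textbf{Step 2: the singular locus.} Let \(Z_1,\dots,Z_k\) be the irreducible components of \(\operatorname{Sing}(\mathcal F)\). Since \(X\) is smooth and \(T_{\mathcal F}\) is saturated, \(\operatorname{Sing}(\mathcal F)\) has codimension at least two, so each \(Z_i\) has codimension two or at least three.
\begin{itemize}
\item For each codimension-two component \(Z_i\), Theorem~\ref{canonicalcases}\textup{(3)} holds at a general point of \(Z_i\). We take the set of such general points to be a dense Zariski open subset \(Z_i^\circ\subset Z_i\), and define the bad set \(B_i:=Z_i\setminus Z_i^\circ\). It is a proper closed subset of the irreducible \(Z_i\), so it has codimension at least three in \(X\). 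We also add to \(B_i\) the intersections \(Z_i\cap Z_j\) for \(j\neq i\); these have codimension at least three as well.
\item Set \(Z:=\bigcup_i B_i\), where the union runs over the codimension-two components.
\end{itemize}
Now take \(x\in\operatorname{Sing}(\mathcal F)\setminus Z\). Either \(x\) lies only on components of codimension at least three, or \(x\) lies in exactly one \(Z_i^\circ\) of codimension two. In the first case Theorem~\ref{canonicalcases}\textup{(3)} applies through its second hypothesis. In the second case it applies through its first hypothesis. By Step~1, points of the regular locus already have first integrals. Hence \(Z\) has the required property.

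\textbf{Main obstacle.} The delicate point is making ``general point'' Zariski-open. Its proof uses three inputs: a transverse smooth surface section, the canonicity of the restriction from \cite[Proposition 3.5]{LorayPereiraTouzet2018}, and the transversality hypothesis of Lemma~\ref{MM80}. I would check that each of these holds on a Zariski open dense subset of \(Z_i\).
\begin{itemize}
\item Transversality means \(\operatorname{Sing}(h^*\omega)=h^{-1}(\operatorname{Sing}\omega)\) with the right codimension. For a general surface through \(x\), this holds wherever \(Z_i\) is smooth and not met by other components, which is open.
\item The restriction of \(\mathcal F\) to a transverse surface is equisingular along the smooth locus of \(Z_i\) away from a proper closed subset. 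This can be seen, for instance, from local triviality of \(\mathcal F\) along the generic stratum of \(Z_i\).
\end{itemize}
If this verification is awkward, a fallback is available. Any constructible subset of \(Z_i\) that contains a dense set of good points contains a dense open subset. Its complement is then closed of codimension at least three in \(X\), which is all that is needed.
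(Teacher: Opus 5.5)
Your proposal is correct and follows the paper's own argument. That argument uses Frobenius on the regular locus, Theorem~\ref{canonicalcases}\textup{(3)} on a dense Zariski open subset of each codimension-two component of $\operatorname{Sing}(\mathcal F)$, and the codimension-$\ge 3$ case at points lying only on higher-codimension components; the paper cites \cite[Corollary 1]{CerveauLinsNeto2008} directly for the last case, which is how the second case of Theorem~\ref{canonicalcases}\textup{(3)} is proved anyway. Your explicit removal of the intersections $Z_i\cap Z_j$ is what the paper means by ``enlarge $Z$ by a closed subset of codimension at least three''. The paper also simply reads ``general point'' as ``point of a dense Zariski open subset'' without comment, so your `main obstacle' discussion is extra elaboration, not a needed step. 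Its appeal to local triviality of $\mathcal F$ along the generic stratum is unproved and should not be relied on.
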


\begin{proof}
Let $S_1,\ldots,S_r$ be the codimension two irreducible components of
$\operatorname{Sing}(\mathcal F)$. For each $i$, Theorem~\ref{canonicalcases}\textup{(3)}
applies on a dense open subset $S_i^\circ\subset S_i$. Set
\[
Z:=\bigcup_i(S_i\setminus S_i^\circ)
\]
and enlarge it, if necessary, by a closed subset of codimension at least three. Outside
$Z$, the conclusion follows from Frobenius at regular points, from
Theorem~\ref{canonicalcases}\textup{(3)} on the general codimension two strata, and from
\cite[Corollary 1]{CerveauLinsNeto2008} at points lying only on components of
codimension at least three.
\end{proof}

\begin{corollary}\label{threefoldoutsidefinite}
Let \(X\) be a smooth threefold and let \(\mathcal F\) be a corank one foliation
with canonical singularities. Suppose that \(\mathcal F\) is 
\(p\)-closed for almost all primes. Then there exists a finite subset
\(
T\subset\operatorname{Sing}(\mathcal F)
\)
such that \(\mathcal F\) admits a local holomorphic first integral at every point
of \(X\setminus T\).
\end{corollary}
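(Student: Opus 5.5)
The plan is to deduce this from Corollary~\ref{canonicaloutsidecodimthree}, using that in dimension three a closed subset of codimension at least three is a finite set of closed points. Corollary~\ref{canonicaloutsidecodimthree} applies here because \(X\) is smooth, \(\mathcal F\) is corank one with canonical singularities, and \(\mathcal F\) is \(p\)-closed for almost all primes. It gives a closed subset \(Z\subset X\) of codimension at least three such that \(\mathcal F\) admits a local holomorphic first integral at every point of \(X\setminus Z\). Since \(\dim X=3\), the set \(Z\) is zero-dimensional, hence finite, because \(X\) is of finite type.

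The next step is to arrange that \(T:=Z\cap\operatorname{Sing}(\mathcal F)\) works, so that \(T\subset\operatorname{Sing}(\mathcal F)\) as the statement requires. At a point \(x\in Z\setminus\operatorname{Sing}(\mathcal F)\) the foliation is regular. The holomorphic Frobenius theorem then gives a submersion \(f\colon U\to\mathbb C\) inducing \(\mathcal F\) on a neighbourhood \(U\) of \(x\), which is a local holomorphic first integral. Hence \(\mathcal F\) admits a local holomorphic first integral at every point of \(X\setminus T\), and \(T\) is finite.

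For the record, one can also see directly why \(Z\) can be taken inside the singular locus. The construction of \(Z\) in the proof of Corollary~\ref{canonicaloutsidecodimthree} consists of the closed proper subsets \(S_i\setminus S_i^\circ\) of the codimension-two components \(S_i\) of \(\operatorname{Sing}(\mathcal F)\). In a threefold these are curves, so each such subset is finite. Points lying only on higher-codimension components are isolated singular points; they are handled by \cite[Corollary 1]{CerveauLinsNeto2008} and need not even be included in \(T\). There is no real obstacle in this argument. The one point to take care over is that Theorem~\ref{canonicalcases}\textup{(3)} must hold on a Zariski dense open subset of each singular curve \(S_i\), which is what "general point" provides. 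The finiteness of \(T\) is then immediate.
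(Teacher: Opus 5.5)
Your proposal is correct and matches the paper's proof, which simply applies Corollary~\ref{canonicaloutsidecodimthree} and notes that a closed subset of codimension at least three in a threefold is finite. Your extra step replaces \(Z\) by \(T:=Z\cap\operatorname{Sing}(\mathcal F)\) and uses Frobenius at the remaining regular points; this is a harmless refinement that makes the requirement \(T\subset\operatorname{Sing}(\mathcal F)\) explicit, which the paper's one-line argument leaves implicit.
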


\begin{proof}
Apply Corollary~\ref{canonicaloutsidecodimthree}. On a threefold, a closed subset
of codimension at least three is finite.
\end{proof}

\section{Finiteness of holonomy}
\label{holonomysection}

This section proves Theorem~\ref{theoremA}. We first recall the notions of
holonomy and local holonomy, and then construct flat connections which record the
finite transverse jets of the holonomy representation.

\subsection{Finite holonomy along invariant divisors}

\begin{definition}[Holonomy]\label{definitionholonomy}
Let \(X\) be a smooth variety, let \(\mathcal F\) be a foliation of corank
\(q\), and let \(L\) be a leaf. Choose points \(p_0,p_1\in L\) and germs
\((\Sigma_0,p_0)\), \((\Sigma_1,p_1)\) of smooth \(q\)-dimensional
submanifolds transverse to \(\mathcal F\). Analytic continuation through
foliation flow boxes along a path \(\gamma\) from \(p_0\) to \(p_1\) gives a
germ of biholomorphism
\(h_\gamma:(\Sigma_0,p_0)\to(\Sigma_1,p_1)\), depending only on the
homotopy class of \(\gamma\) relative to its endpoints.

For loops based at \(p_0\), these germs define the holonomy representation
\(\rho_{L,p_0,\Sigma_0}:\pi_1(L,p_0)\to
\operatorname{Diff}(\Sigma_0,p_0)\). Its image is the \emph{holonomy group}
of \(L\). If \(V\subset L\) is a connected open subset containing \(p_0\),
the image of \(\pi_1(V,p_0)\) under the same representation is called the
holonomy group of \(V\).
\end{definition}

\begin{definition}[Local holonomy]\label{definitionlocalholonomy}
Let \(L\) be a leaf and let \(p\in\overline L\setminus L\). For a sufficiently
small neighbourhood \(B\) of \(p\), a connected component of \(L\cap B\)
whose closure contains \(p\) is called a \emph{local branch of \(L\) at
\(p\)}. The holonomy group of such a branch, computed from a point and a
transversal inside the branch, is its \emph{local holonomy group}. After
choosing a path to a fixed base point of \(L\), it is identified, up to
conjugacy, with a subgroup of the holonomy group of \(L\).
\end{definition}

\begin{remark}\label{remarklocalholonomy}
Holonomy groups depend on the chosen base point and transversal only by
conjugacy. Thus finiteness, cyclicity, and the equality of a local holonomy
subgroup with the full holonomy group do not depend on these choices.

\end{remark}

\begin{lemma}\label{gmgaarithmetic}
Let \(B\) be a smooth connected complex variety, and let
\(\beta\in H^0(B,\Omega_B^1)\) be a closed algebraic \(1\)-form.
Consider
\[
\pi:P_\beta:=B\times\mathbb A^1_s\longrightarrow B
\]
as a principal \(\mathbb G_a\)-bundle via translations in the fibre
coordinate \(s\), and set
\[
H_\beta:=\ker\bigl(ds+\pi^*\beta\bigr)
\subset T_{P_\beta}.
\]
Then \(H_\beta\) is a flat principal \(\mathbb G_a\)-connection.

Suppose that, after spreading out, its reductions have vanishing
\(p\)-curvature for almost all primes. Then there exists a regular
function \(c\in H^0(B,\mathcal O_B)\) such that
\(
dc=-\beta.
\)
Equivalently, the graph
\[
B\longrightarrow P_\beta,
\qquad
b\longmapsto (b,c(b)),
\]
is a horizontal section.
\end{lemma}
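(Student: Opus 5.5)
Flatness is immediate: \(H_\beta\) is the kernel of the nowhere-vanishing \(1\)-form \(\theta:=ds+\pi^*\beta\). Since \(d\beta=0\), we have \(d\theta=\pi^*d\beta=0\). Hence \(\theta\wedge d\theta=0\) and \(H_\beta\) is involutive. It is \(\mathbb G_a\)-invariant because \(\theta\) is invariant under \(s\mapsto s+t\). Also \(d\pi|_{H_\beta}\) is an isomorphism, since \(\partial_s\notin H_\beta\). The horizontal lift of a vector field \(v\) on \(B\) is \(\widetilde v=v-\beta(v)\partial_s\).

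For the main assertion, I will translate the principal \(\mathbb G_a\)-connection into a rank-two vector-bundle connection and apply André's theorem (Theorem~\ref{andreextension}). Let \(\mathcal E:=\mathcal O_B e_0\oplus\mathcal O_B e_1\), and define
\[
\nabla e_1=0,\qquad \nabla e_0=e_1\otimes\beta .
\]
This is the linear representation of \(\mathbb G_a\) by unipotent \(2\times2\) matrices, applied to \(P_\beta\). The connection is flat because \(d\beta=0\). It sits in an exact sequence
\[
0\to(\mathcal O_B,d)\to(\mathcal E,\nabla)\to(\mathcal O_B,d)\to0,
\]
whose outer terms are trivial, hence isotrivial. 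A direct computation with \(\psi_\nabla(v)=\nabla_v^p-\nabla_{v^{[p]}}\) gives
\[
\psi_\nabla(v)e_0=\bigl(v^{p-1}(\beta(v))-\beta(v^{[p]})\bigr)e_1 .
\]
The same expression appears as the vertical component of \(\widetilde v^{[p]}-\widetilde{v^{[p]}}\), because \(\widetilde v^{\,p}\) acts on \(s\) as \(-v^{p-1}(\beta(v))\). So after spreading out, the vanishing of the \(p\)-curvature of \(H_\beta\) for almost all primes is equivalent to the vanishing of \(\psi_\nabla\) for almost all primes. Theorem~\ref{andreextension} then shows that \((\mathcal E,\nabla)\) becomes trivial on some connected finite étale cover \(\nu:B'\to B\).

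Next I produce a primitive on \(B'\). On \(B'\), the horizontal sections form a rank-two local system. Since the connection is trivial there, it has a global horizontal section not lying in \(\mathcal O e_1\); after rescaling by a constant it has the form \(e_0+c'e_1\) with \(c'\in H^0(B',\mathcal O_{B'})\). The horizontality condition \(\nabla(e_0+c'e_1)=0\) reads \(dc'=-\nu^*\beta\). Any other horizontal section of this form differs from it by a constant multiple of \(e_1\).

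The main obstacle is descending \(c'\) from \(B'\) to \(B\), and I would handle it by averaging. I may assume \(\nu\) is Galois with group \(G\) by passing to the Galois closure. For \(g\in G\), the function \(g^*c'\) satisfies \(d(g^*c')=-\nu^*\beta\), so \(g^*c'-c'\) is a constant \(\lambda_g\). Then \(g\mapsto\lambda_g\) is a homomorphism \(G\to(\mathbb C,+)\). Since \(G\) is finite and \(\mathbb C\) is torsion-free, this homomorphism is zero. Hence \(c'\) is \(G\)-invariant and descends to a regular function \(c\) on \(B\) with \(dc=-\beta\). Equivalently, one can set \(c=\frac1{|G|}\sum_{g\in G}g^*c'\) and use \(\operatorname{char}=0\).

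Finally, the graph \(b\mapsto(b,c(b))\) is horizontal because \(\theta\) pulls back along it to \(dc+\beta=0\).
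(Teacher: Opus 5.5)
Your proof is correct and follows essentially the same route as the paper: you encode $H_\beta$ as the unipotent rank-two extension of $(\mathcal O_B,d)$ by itself, apply André's theorem to trivialise it on a finite Galois étale cover, normalise a horizontal section via the quotient map to get $dc'=-\nu^*\beta$, and descend to $B$ by Galois averaging. The differences are cosmetic: you swap $e_0$ and $e_1$, check the $p$-curvature comparison by explicit computation rather than by citing the faithful representation, and observe that $g\mapsto g^*c'-c'$ is a homomorphism into the torsion-free group $(\mathbb C,+)$, so $c'$ is already invariant.
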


\begin{proof}
For a local vector field \(v\) on \(B\), the unique horizontal lift is
\[
\widetilde v
=
v-\beta(v)\frac{\partial}{\partial s}.
\]
Thus \(H_\beta\) is complementary to the vertical tangent bundle.
Translations in \(s\) preserve \(ds+\pi^*\beta\), so the distribution
is invariant under the \(\mathbb G_a\)-action and hence defines a
principal connection. Moreover,
\[
[\widetilde v,\widetilde w]
=
\widetilde{[v,w]}
-
d\beta(v,w)\frac{\partial}{\partial s}.
\]
Since \(d\beta=0\), the connection is flat.

Consider the rank-two connection
\[
\mathcal E_\beta
=
\mathcal O_Be_0\oplus\mathcal O_Be_1,
\qquad
\nabla e_0=0,
\qquad
\nabla e_1=\beta\otimes e_0.
\]
It fits into an exact sequence
\[
0\longrightarrow
(\mathcal O_B,d)
\longrightarrow
(\mathcal E_\beta,\nabla)
\longrightarrow
(\mathcal O_B,d)
\longrightarrow0.
\]
The \(p\)-curvature of this connection is the image of the
\(p\)-curvature of \(H_\beta\) under the standard faithful
representation
\[
\mathbb G_a\longrightarrow\operatorname{GL}_2,
\qquad
a\longmapsto
\begin{pmatrix}
1&a\\
0&1
\end{pmatrix}.
\]
It therefore vanishes after reduction modulo almost all primes.

By Theorem~\ref{andreextension},
\((\mathcal E_\beta,\nabla)\) becomes trivial after a finite étale
cover. Passing to a connected finite Galois cover
\(
\tau:B'\longrightarrow B
\)
dominating such a cover, we may choose a global horizontal frame
\(u_0,u_1\) of \(\tau^*\mathcal E_\beta\).

Let
\(
q:\tau^*\mathcal E_\beta\longrightarrow\mathcal O_{B'}
\)
be the pull-back of the quotient map in the preceding exact sequence.
Since \(q\) is a morphism of flat connections, the functions
\(q(u_0)\) and \(q(u_1)\) are horizontal for the trivial connection
\((\mathcal O_{B'},d)\), and are therefore constant. Since \(q\) is
surjective, they are not both zero. After taking a constant linear
combination of \(u_0\) and \(u_1\), we obtain a horizontal section
\(u\) such that
\(
q(u)=1.
\)

We continue to denote the pull-backs of \(e_0,e_1\) by the same
symbols. Since
\(
q(e_1)=1, 
\ker(q)=\mathcal O_{B'}e_0,
\)
there is a unique regular function
\(f\in H^0(B',\mathcal O_{B'})\) such that
\(
u=e_1+fe_0.
\)
The horizontality of \(u\) gives
\[
0
=
\nabla u
=
\bigl(df+\tau^*\beta\bigr)\otimes e_0,
\]
and hence
\(
df=-\tau^*\beta.
\) Let \(G\) be the Galois group of \(\tau\), and set
\[
\overline f
:=
\frac{1}{|G|}
\sum_{\sigma\in G}\sigma^*f.
\]
Then \(\overline f\) is \(G\)-invariant and still satisfies
\(
d\overline f=-\tau^*\beta.
\)
It therefore descends to a regular function
\(c\in H^0(B,\mathcal O_B)\) satisfying \(dc=-\beta\).
\end{proof}

The following proof is inspired by the formal-neighbourhood strategy of
\cite[Proposition~2.5]{EkedahlShepherdBarronTaylor1999}.  The higher-order
lifting step is made explicit below: at each order, the obstruction is encoded by a flat principal
\(\mathbb G_a\)-connection, and Lemma~\ref{gmgaarithmetic} forces it to vanish.

\begin{proof}[Proof of Theorem~\ref{theoremA}]
Put \(n:=\dim X\). Since \(D\) is \(\mathcal F\)-invariant, on
\(D^\circ\) one has
\(T_{\mathcal F}|_{D^\circ}\subseteq T_{D^\circ}\). Both bundles have
rank \(n-1\), so they are equal. Hence each connected component of
\(D^\circ\) is a leaf. Fix one such component \(L\).

\smallskip
\noindent\textbf{Step 1: Choose an étale chart and a local frame near $L$}

All the algebraic data descend to a subfield \(K\subset\mathbb C\), finitely
generated over \(\mathbb Q\). After a finite extension of \(K\) and
shrinking around the generic point of \(D\), choose an affine open subset
\(W\subset X\) and regular functions \(x_1,\ldots,x_{n-1},y\) such that
\(
(x_1,\ldots,x_{n-1},y):W\longrightarrow\mathbb A_K^n\) is étale, 
and \( U:=D\cap W=(y=0)\subset L.\) After shrinking \(W\) once more, $T_\FF$ has
the unique frame
\[
\xi_i=\partial_{x_i}+A_i(x,y)\partial_y,
\qquad A_i(x,0)=0,
\qquad i=1,\ldots,n-1,
\]
normalised by \(\xi_i(x_j)=\delta_{ij}\). The fields \(\xi_i\) commute.
Indeed, involutivity gives
\([\xi_i,\xi_j]=\sum_k c_{ij}^k\xi_k\), and applying this equality to every
\(x_\ell\) gives \(c_{ij}^\ell=0\).

Choose \(q\in U\), and let
\[
\Sigma
=
\{x_1=x_1(q),\ldots,x_{n-1}=x_{n-1}(q)\}
\]
be the coordinate transversal, with transverse coordinate
\(t:=y|_\Sigma\). Let
\(\rho:\pi_1(L,q)\to\operatorname{Diff}(\mathbb C,0)\) be the holonomy
representation. The inclusion \(i:U(\mathbb C)\hookrightarrow L(\mathbb C)\)
induces a surjection on fundamental groups. Indeed, since \(L\) is smooth and \(L\setminus U\) is a proper complex
algebraic subset, every loop in \(L\) can be perturbed, relative to its
base point, to avoid \(L\setminus U\). Hence
\(i_*:\pi_1(U,q)\twoheadrightarrow\pi_1(L,q)\). It
is therefore enough to prove that \(\rho\circ i_*\) has finite image.

Let \(\mathcal I\subset\mathcal O_W\) be the ideal of \(U\), generated by
\(y\). For \(m\geq0\), write
\(U^{(m)}=(U,\mathcal O_W/\mathcal I^{m+1})\) for the \(m\)-th infinitesimal
neighbourhood of \(U\) in \(W\).

\smallskip
\noindent\textbf{Step 2: Kill the first-order holonomy.}

The conormal line bundle
\(
\mathcal N^*:=\mathcal I/\mathcal I^2
\)
carries the Bott connection. Since
\(T_{\mathcal F}|_U=T_U\), this is a full flat connection on \(U\).
For a local section \(\overline f\) of \(\mathcal N^*\), represented by
\(f\in\mathcal I\), one has
\[
\nabla^{\mathrm B}_{\partial_{x_i}}(\overline f)
=
\overline{\xi_i(f)}.
\]

We claim that its reductions have vanishing \(p\)-curvature. Indeed, \(p\)-closedness of $\FF$ gives
\(
\xi_i^{[p]}
=
\sum_j c_{ij}\xi_j.
\) after modulo $p$.
Applying this identity to \(x_k\) gives
\[
c_{ik}
=
\xi_i^{[p]}(x_k)
=
\xi_i^p(x_k)
=
0,
\]
because \(\xi_i(x_k)=\delta_{ik}\). Hence
\(\xi_i^{[p]}=0\). Moreover,
\(\partial_{x_i}^{[p]}=0\), since the \(x_i\) are étale coordinates.
It follows that
\[
\psi_{\nabla^{\mathrm B}}(\partial_{x_i})(\overline f)
=
\bigl(\nabla^{\mathrm B}_{\partial_{x_i}}\bigr)^p(\overline f)
-
\nabla^{\mathrm B}_{\partial_{x_i}^{[p]}}(\overline f)
=
\overline{\xi_i^p(f)}
=
0.
\]
Thus the reductions of
\((\mathcal N^*,\nabla^{\mathrm B})\) have vanishing \(p\)-curvature for
almost all primes.

By Theorem~\ref{rankoneGK}, the Bott connection
\((\mathcal N^*,\nabla^{\mathrm B})\) is isotrivial. Hence there exists
a connected finite étale cover
\(
\nu:U'\longrightarrow U
\)
such that \(\nu^*\mathcal N^*\) admits a nowhere-vanishing algebraic
horizontal section. Since \(U\) is affine and \(\nu\) is finite, \(U'\) is affine.

Since finite étale covers lift uniquely across nilpotent closed
immersions, \(\nu\) extends uniquely, compatibly in \(m\), to finite
étale covers
\(
\nu^{(m)}:U'^{(m)}\longrightarrow U^{(m)}.
\)
Let
\[
\mathcal I'_m
:=
\ker\bigl(
\mathcal O_{U'^{(m)}}\longrightarrow\mathcal O_{U'}
\bigr),
\]
and suppress the subscript \(m\) when the order is clear. Étale base
change gives a canonical identification
\[
\nu^*(\mathcal I/\mathcal I^2)
\simeq
\mathcal I'/\mathcal I'^2.
\]
Under this identification, denote the chosen horizontal generator by
\(z_1\).

The vector fields \(\xi_i\) preserve \(\mathcal I\), and hence induce
derivations on every \(U^{(m)}\). They lift uniquely through the étale
maps \(\nu^{(m)}\); we continue to denote the lifted vector fields by
\(\xi_i\). We also write \(x_i\) for the pull-back of \(x_i\) to
\(U'\), and set
\(
V_i:=\xi_i|_{U'}.
\)
Then
\(
V_i(x_j)=\delta_{ij},
\)
so \(V_1,\ldots,V_{n-1}\) form the frame of \(T_{U'}\) dual to
\(dx_1,\ldots,dx_{n-1}\).

\smallskip
\noindent\textbf{Step 3: Lift the horizontal coordinate order by order.}

We construct inductively, for every \(m\geq1\), compatible sections
\(z_m\in
H^0\bigl(U'^{(m)},\mathcal I'/\mathcal I'^{m+1}\bigr)
\)
whose image modulo \(\mathcal I'^2\) is \(z_1\), and which satisfy
\[
\xi_i(z_m)=0
\pmod{\mathcal I'^{m+1}}
\]
for every $i$.
The case \(m=1\) is given by the horizontal generator \(z_1\)
constructed in Step~2.

Suppose that \(m\geq2\) and that \(z_{m-1}\) has already been
constructed. On \(U'^{(m)}\) there is an exact sequence
\[
0\longrightarrow
\mathcal I'^m/\mathcal I'^{m+1}
\longrightarrow
\mathcal I'/\mathcal I'^{m+1}
\longrightarrow
\mathcal I'/\mathcal I'^m
\longrightarrow0.
\]
Since \(U'^{(m)}\) is affine, the induced map on global sections is
surjective. We may therefore choose a section
\(
z\in
H^0\bigl(U'^{(m)},\mathcal I'/\mathcal I'^{m+1}\bigr)
\)
whose image modulo \(\mathcal I'^m\) is \(z_{m-1}\).

This lift need not be horizontal. Since \(z_{m-1}\) is horizontal,
\(\xi_i(z)\) vanishes modulo \(\mathcal I'^m\), and hence determines a
section of \(\mathcal I'^m/\mathcal I'^{m+1}\).

Since \(U\subset W\) is a Cartier divisor, multiplication gives an
isomorphism
\[
\bigl(\mathcal I'/\mathcal I'^2\bigr)^{\otimes m}
\simeq
\mathcal I'^m/\mathcal I'^{m+1}.
\]
Via this isomorphism, the Bott connection induces a connection on
\(\mathcal I'^m/\mathcal I'^{m+1}\). The section \(z_1^m\) is a
nowhere-vanishing horizontal generator of this line bundle; in
particular,
\[
\xi_i(z_1^m)=0
\pmod{\mathcal I'^{m+1}}.
\]
There are therefore unique functions \(b_i\in\mathcal O(U')\) such that
\[
\xi_i(z)
=
b_i z_1^m
\pmod{\mathcal I'^{m+1}}.
\tag{\(*_m\)}
\]

The functions \(b_i\) measure the failure of \(z\) to be horizontal.
Applying \([\xi_i,\xi_j]=0\) to \(z\), and using the horizontality of
\(z_1^m\), gives
\[
0
=
[\xi_i,\xi_j](z)
=
\bigl(V_i(b_j)-V_j(b_i)\bigr)z_1^m
\pmod{\mathcal I'^{m+1}}.
\]
Since \(z_1^m\) is a generator,
\(V_i(b_j)=V_j(b_i)\). The vector fields \(V_i\) commute and are dual
to \(dx_1,\ldots,dx_{n-1}\); hence the algebraic one-form
\(
\beta_m:=\sum_i b_i\,dx_i
\)
is closed.

Replacing \(z\) by \(z+c z_1^m\), with \(c\in\mathcal O(U')\), replaces
the coefficients \(b_i\) by \(b_i+V_i(c)\), and therefore replaces
\(\beta_m\) by \(\beta_m+dc\). Thus it remains to prove that
\(\beta_m\) is exact.

Fix \(m\), spread out the finite-order data above, and work on a 
closed fibre of characteristic \(p\). By Step~2 and étaleness,
\(
\xi_i^{[p]}=0\) and
\(
V_i^{[p]}=0.
\)
Repeatedly applying \(\xi_i\) to \((\ast_m)\), and using the
horizontality of \(z_1^m\), yields
\[
0
=
\xi_i^p(z)
=
V_i^{p-1}(b_i)z_1^m
\pmod{\mathcal I'^{m+1}}.
\]
Since \(z_1^m\) is a generator, it follows that
\[
V_i^{p-1}(b_i)=0.
\tag{\(**_m\)}
\]

Consider the trivial principal \(\mathbb G_a\)-bundle
\(
P_m:=U'\times\mathbb A^1_s\longrightarrow U'
\)
with horizontal distribution
\(
H_m:=\ker(ds+\beta_m).
\)
Since \(d\beta_m=0\), Lemma~\ref{gmgaarithmetic} shows that \(H_m\)
is a flat principal connection. The horizontal lift of \(V_i\) is
\(
\widetilde V_i
=
V_i-b_i\frac{\partial}{\partial s}.
\)
On the same characteristic-\(p\) fibre, \(b_i\) is independent of
\(s\), \([V_i,\partial_s]=0\), and
\(\partial_s^{[p]}=0\). So under the natural identification
\(
\operatorname{ad}(P_m)
\simeq
\mathcal O_{U'}\frac{\partial}{\partial s},
\)
one has
\[
\psi_{H_m}(V_i)
=
-V_i^{p-1}(b_i)\frac{\partial}{\partial s}=0
\]
 by \((\ast\ast_m)\). Since the \(V_i\) form a frame of
\(T_{U'}\), the reductions of \(H_m\) have vanishing \(p\)-curvature
for almost all primes.

Lemma~\ref{gmgaarithmetic} therefore gives a regular function
\(c_m\in\mathcal O(U')\) satisfying \(dc_m=-\beta_m\). Set
\(
z_m:=z+c_mz_1^m.
\)
Then \(z_m\) still lifts \(z_{m-1}\), and
\[
\xi_i(z_m)
=
\bigl(b_i+V_i(c_m)\bigr)z_1^m
=
0
\pmod{\mathcal I'^{m+1}}.
\]
Thus \(z_m\) is the required horizontal lift, completing the induction.
\smallskip
\smallskip

\smallskip
\noindent\textbf{Step 4: Conclude finite holonomy.}

Choose \(q'\in U'\) over \(q\), and set
\[
\Pi'
:=
\nu_*\pi_1(U',q')
\subseteq
\pi_1(U,q).
\]
Since \(\nu:U'\to U\) is a connected finite étale cover, the subgroup
\(\Pi'\) has finite index in \(\pi_1(U,q)\). We prove that every element
of \(\Pi'\) has trivial holonomy.

Let \(\gamma\) be a loop in \(U'\) based at \(q'\), and let
\(\bar\gamma:=\nu\circ\gamma\) be the corresponding loop in \(U\).
Denote by \(h_{\bar\gamma}\) its holonomy germ on the fixed transversal
\((\Sigma,q)\). We take
\[
\Sigma
=
\{x_1=x_1(q),\ldots,x_{n-1}=x_{n-1}(q)\}
\]
and write \(t:=y|_\Sigma\) for its transverse coordinate.

For \(m\geq1\), let
\[
\Sigma^{(m)}
:=
\operatorname{Spec}\mathbb C[t]/(t^{m+1})
\]
be the \(m\)-th infinitesimal neighbourhood of \(q\) in \(\Sigma\).
Restricting \(\nu^{(m)}:U'^{(m)}\to U^{(m)}\) to
\(\Sigma^{(m)}\) gives a finite étale cover of \(\Sigma^{(m)}\).
Since \(\Sigma^{(m)}\) has no nontrivial connected finite étale covers,
the component containing \(q'\) is isomorphic to \(\Sigma^{(m)}\).
We use this isomorphism to identify the lifted infinitesimal
transversal at \(q'\) with \(\Sigma^{(m)}\).

Let
\(
\theta_m(t)
\in
t\mathbb C[t]/(t^{m+1})
\)
be the restriction of \(z_m\) to this lifted infinitesimal
transversal. Since \(z_m\) reduces to the nowhere-vanishing section
\(z_1\) modulo \(\mathcal I'^2\), one has
\[
\theta_m(t)
=
\lambda t+O(t^2)
\pmod{t^{m+1}}
\]
for some \(\lambda\neq0\). Hence \(\theta_m\) is invertible for
composition modulo \(t^{m+1}\).

Subdivide \(\bar\gamma\) into finitely many segments contained in
regular flow boxes. After shrinking the flow boxes, the cover
\(U'\to U\) is trivial over their intersections with \(U\), and the
lift \(\gamma\) selects compatible sheets. By uniqueness of finite
étale lifting across nilpotent thickenings, the same holds for
\(U'^{(m)}\to U^{(m)}\).

In flow-box coordinates
\((w_1,\ldots,w_{n-1},t)\), write
\[
z_m
=
\sum_{r=1}^m a_r(w)t^r
\pmod{t^{m+1}}.
\]
Since the vector fields \(\xi_i\) span the leafwise directions and
\[
\xi_i(z_m)=0
\pmod{\mathcal I'^{m+1}},
\]
each coefficient \(a_r\) is independent of
\(w_1,\ldots,w_{n-1}\). Thus the local holonomy in every flow box
preserves the restriction of \(z_m\) to the transversals. Composing
these local identities along \(\bar\gamma\), and using that
\(\gamma\) is a loop based at \(q'\), gives
\[
\theta_m\bigl(h_{\bar\gamma}(t)\bigr)
=
\theta_m(t)
\pmod{t^{m+1}}.
\]
Since \(\theta_m\) is invertible for composition, it follows that
\[
h_{\bar\gamma}(t)
=
t
\pmod{t^{m+1}}.
\]

This holds for every \(m\geq1\). Hence all Taylor coefficients of the
holomorphic germ \(h_{\bar\gamma}\) agree with those of the identity,
and therefore
\(
h_{\bar\gamma}
=
\operatorname{id}.
\)

Thus every element of \(\Pi'\) has trivial holonomy, so
\(
\Pi'
\subseteq
\ker(\rho\circ i_*).
\)
Since \(\Pi'\) has finite index in \(\pi_1(U,q)\), the kernel of
\(\rho\circ i_*\) also has finite index. Hence
\(\operatorname{Im}(\rho\circ i_*)\) is finite. Finally,
\(i_*:\pi_1(U,q)\twoheadrightarrow\pi_1(L,q)\) is surjective, and
therefore
\(
\operatorname{Im}(\rho)
=
\operatorname{Im}(\rho\circ i_*)
\)
is finite.
\end{proof}

\section{Applications of the finite-holonomy theorem}
\label{holonomyapplications}

This section collects global and local consequences of Theorem~\ref{theoremA}. We
first treat compact leaves, then develop the holonomy-continuation argument near
invariant rational curves, and finally apply it to non-dicritical surface singularities
and transverse surface sections.

\subsection{Compact leaves and global integrability}

\begin{proof}[Proof of Theorem~\ref{compactleafalgebraic}]
Let \(L\) be a compact leaf of \(\mathcal F\). Since \(L\) is contained in the
regular locus of the foliation and \(\mathcal F\) has corank one, it is a smooth
compact analytic hypersurface of \(X\). By Chow's theorem, \(L\) is an algebraic
\(\mathcal F\)-invariant prime divisor.

Theorem~\ref{theoremA} shows that the holonomy group of \(L\) is finite. Since
\(L\) is compact and disjoint from \(\operatorname{Sing}(\mathcal F)\), the
foliation is regular on an analytic neighbourhood of \(L\). The holomorphic Reeb
local stability theorem therefore gives a saturated neighbourhood of \(L\) containing
infinitely many compact leaves. By the Darboux--Jouanolou theorem
\cite[Theorem~2.3]{PereiraSpicer2020} (See also \cite{Jouanolou1978}), the existence of infinitely many
invariant algebraic hypersurfaces implies that \(\mathcal F\) admits a
rational first integral, i.e. algebraically integrable.
\end{proof}

\begin{remark}\label{remarkcompactleafstructure}
Theorem \ref{theoremA} gives more precise information on the
geometry of the compact leaf. Let \(\Gamma\) be the holonomy group of
\(L\). Since \(\Gamma\) is finite, it is cyclic and analytically
linearizable. Its linear part is, up to the usual inverse convention,
the monodromy of the flat normal bundle \(N_{L/X}\). Hence
\(N_{L/X}\) is torsion; let \(k\) denote its order.

The holonomy is in particular abelian and formally linearizable.
Therefore
\cite[Proposition~6.1\textup{(2)}]
{ClaudonLorayPereiraTouzet2018}
implies that \(L\) has infinite Ueda type. It follows from
\cite[Theorem~1.3]{ClaudonLorayPereiraTouzet2018}
that \(kL\) is a fibre of a holomorphic fibration
\(X\to C\) onto a curve.

Finally,
\cite[Proposition~6.3]{ClaudonLorayPereiraTouzet2018}
shows that there exist a projective manifold \(Z\) and a generically
finite morphism \(\pi:Z\to X\) such that the pull-back foliation
\(\pi^*\mathcal F\) is defined by a closed rational \(1\)-form.
\end{remark}

\subsection{First integrals near invariant rational curves}

\begin{lemma}\label{primitivefirstintegrals}
Let $f,g:(S,p)\to(\mathbb C,0)$ be primitive holomorphic first integrals, in the
sense of Definition~\ref{definitionfirstintegral}, of the same rank-one foliation germ
on a normal surface. Then there is a one-variable biholomorphic germ $\varphi$ such
that
\(
g=\varphi\circ f.
\)
\end{lemma}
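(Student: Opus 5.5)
The plan is to compare the level sets of \(f\) and \(g\) through the leaf space. Work on a small Stein representative \(S\) of the germ with \(f,g\) holomorphic and \(f(p)=g(p)=0\). Both \(f\) and \(g\) induce the foliation, so on the dense open set \(S^\circ\subset S_{\mathrm{reg}}\setminus\operatorname{Sing}(\mathcal F)\) where \(df\neq0\) and \(dg\neq0\), both are locally constant along leaves. Hence \(df\wedge dg=0\) there. By density this identity holds on all of \(S_{\mathrm{reg}}\), so \(f\) and \(g\) are functionally dependent. The goal is to turn this dependence into a relation \(g=\varphi\circ f\), and then use primitivity of both functions to show that \(\varphi\) is a biholomorphic germ.

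The first step is to produce \(\varphi\). Primitivity of \(f\) means its general fibres are connected after shrinking. Take a small disc \(\Delta\) around \(0\) such that \(f\colon S\cap f^{-1}(\Delta)\to\Delta\) is a proper map from a suitable Milnor-type neighbourhood, with connected fibres over \(\Delta^*=\Delta\setminus\{0\}\). Each fibre over \(\Delta^*\) is connected and a union of leaves together with finitely many singular points. Since \(dg\) vanishes along leaves, \(g\) is constant on every fibre \(f^{-1}(t)\) with \(t\in\Delta^*\). This defines a function \(\varphi(t)\) on \(\Delta^*\) with \(g=\varphi\circ f\).

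Holomorphy of \(\varphi\) comes from evaluating \(g\) along a local holomorphic section of \(f\) through a point where \(df\neq0\). Since \(g\) is bounded near \(p\), Riemann's extension theorem extends \(\varphi\) holomorphically across \(0\), with \(\varphi(0)=0\). The identity \(g=\varphi\circ f\) then holds on a dense set and hence everywhere by continuity.

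Next, invertibility. Write \(\varphi(t)=t^k u(t)\) with \(u(0)\neq0\), where \(k\geq1\) because \(g\) is nonconstant. If \(k>1\), then \(g=\varphi\circ f\) is a nontrivial composition with a one-variable germ of multiplicity greater than one. This contradicts primitivity of \(g\), using the equivalent formulation in Definition~\ref{definitionfirstintegral}. So \(k=1\), \(\varphi'(0)\neq0\), and \(\varphi\) is a biholomorphic germ.

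The main obstacle is the first step: justifying that \(g\) is constant on entire fibres of \(f\). A fibre may be connected only through points of \(\operatorname{Sing}(\mathcal F)\) or \(\operatorname{Sing}(S)\), where leafwise constancy alone says nothing. The fix I would try is a continuity argument. \(g\) is continuous on the whole fibre and locally constant on the dense open subset of regular points of the fibre, and the fibre minus finitely many points stays connected: a connected curve germ-fibre minus finitely many points whose local branches are punctured discs. One subtlety is that a fibre might split into several components meeting only at singular points. In that case constancy on each component together with continuity at the meeting points still gives a single value.

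It is also worth checking that the Milnor-type properness can be arranged on a normal surface germ. I would embed \(S\) in some \(\mathbb C^N\) and intersect with a small ball transverse to the fibres, as in the standard Milnor fibration for functions on isolated singularities.
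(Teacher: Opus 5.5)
Your proposal is correct and is essentially the paper's argument: \(g\) descends along the connected fibres of \(f\), and primitivity of \(g\) then forces the resulting one-variable germ to have multiplicity one. The paper phrases the descent through a local Stein factorisation \(f=\alpha\circ h\) and uses primitivity of \(f\) to see that \(\alpha\) is biholomorphic, whereas you use primitivity of \(f\) directly and build \(\varphi\) by hand from local sections and Riemann extension; your ``main obstacle'' is also harmless, since after shrinking the fibres over \(\Delta^*\) avoid \(\operatorname{Sing}(S)\), \(\operatorname{Sing}(\mathcal F)\) and the critical locus of \(f\).
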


\begin{proof}
After shrinking representatives, the connected components of the fibres of either
first integral are the separatrixes. In particular, $g$ is constant on every connected
component of a fibre of $f$.

Take the local Stein factorisation of $f$,
\[
(S,p)\xrightarrow{h}(T,0)\xrightarrow{\alpha}(\mathbb C,0),
\qquad f=\alpha\circ h,
\]
where the general fibres of $h$ are connected and $\alpha$ is finite. The normal curve
germ $(T,0)$ is smooth, so we identify it with $(\mathbb C,0)$. Since $g$ is constant
on the connected fibres of $h$, it descends to a holomorphic germ
$\beta:(T,0)\to(\mathbb C,0)$ and
\(
g=\beta\circ h.
\)

The primitivity of $f$ means that its general fibre is connected. If the finite map
$\alpha$ had degree greater than one, a general fibre of $f$ would be the disjoint union
of several general fibres of $h$. Hence $\alpha$ has degree one and is biholomorphic.
The same argument, applied to $g$, shows that $\beta$ is biholomorphic. Therefore
\(
g
=
\bigl(\beta\circ\alpha^{-1}\bigr)\circ f.
\)
Taking $\varphi=\beta\circ\alpha^{-1}$ proves the assertion; in particular,
$\varphi'(0)\neq0$.
\end{proof}

\begin{proposition}\label{firstintegralnearcomponent}
Let \(Y\) be a normal surface which is smooth in a neighbourhood of an
irreducible curve \(C\simeq\mathbb P^1\). Let \(\mathcal G\) be a rank-one
foliation on \(Y\), and suppose that \(C\) is \(\mathcal G\)-invariant. Set
\(
S:=C\cap\operatorname{Sing}(\mathcal G), C^\circ:=C\setminus S.
\)

Fix a point \(q\in C^\circ\) and a transverse disc \(\Sigma\) through
\(q\). Denote by
\(\Gamma\subset\operatorname{Diff}(\Sigma,q)\) the holonomy group of
\(C^\circ\). For every \(p\in S\), let
\(\Gamma_p\subseteq\Gamma\) be the local holonomy subgroup at \(p\),
transported to \(\Sigma\) as in
Definition~\ref{definitionlocalholonomy}.

Assume that \(\Gamma\) is finite and that, for every \(p\in S\), there are
holomorphic coordinates \(u_p,v_p\) centred at \(p\), with
\(C=(u_p=0)\), and coprime positive integers \(a_p,b_p\) such that
\(
F_p:=u_p^{a_p}v_p^{b_p}
\)
is a primitive holomorphic first integral of \(\mathcal G\) at \(p\).

Then \(\mathcal G\) admits a holomorphic first integral on a neighbourhood
of \(C\).

Moreover, if \(\Gamma_p=\Gamma\) for some \(p\in S\), then the first
integral may be chosen primitive at \(p\). In particular, if
\(\#S\leq2\), it may be chosen primitive at every point of \(S\).
\end{proposition}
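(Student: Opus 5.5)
The plan is to build a first integral first on a neighbourhood of \(C^\circ\), using the finiteness of \(\Gamma\), and then extend it across each singular point \(p\in S\) by comparison with the monomial first integral \(F_p\).

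\textbf{Step 1: a first integral near \(C^\circ\).} Since \(\Gamma\) is a finite subgroup of \(\operatorname{Diff}(\Sigma,q)\), it is cyclic and analytically linearizable. Choose a coordinate \(t\) on \(\Sigma\) in which \(\Gamma\) acts by \(t\mapsto\zeta t\), where \(\zeta\) is a primitive \(N\)-th root of unity and \(N=\#\Gamma\). The function \(h:=t^N\) on \(\Sigma\) is then \(\Gamma\)-invariant and primitive for this action. Transport \(h\) along leaves through flow boxes along paths in \(C^\circ\). Because \(h\) is invariant under every holonomy germ, the result is a well-defined holomorphic function \(H\) on a tubular neighbourhood \(V\) of any compact subset of \(C^\circ\). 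The function \(H\) is constant on leaves and vanishes to order \(N\) along \(C\).

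\textbf{Step 2: extension across each \(p\in S\).} Fix \(p\in S\) and a small punctured annulus \(A_p\subset C^\circ\) around \(p\). The local holonomy of the loop around \(p\) is computed from \(F_p=u_p^{a_p}v_p^{b_p}\). On a transversal \(\{v_p=c\}\) with coordinate \(u_p\), the holonomy is \(u_p\mapsto e^{2\pi i b_p/a_p}u_p\), so \(\Gamma_p\) is cyclic of order \(a_p\). Hence \(a_p\) divides \(N\). On this transversal, \(F_p\) restricts to \(c^{b_p}u_p^{a_p}\), and \(H\) restricts to a \(\Gamma_p\)-invariant function vanishing to order exactly \(N\). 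Therefore \(H|_{\{v_p=c\}}=\phi_p(F_p)\) for a one-variable germ \(\phi_p\) of order \(N/a_p\). Since both functions are first integrals, \(H=\phi_p\circ F_p\) on the saturation of the transversal inside the punctured neighbourhood \(\{0<|v_p|<\varepsilon\}\). The function \(\phi_p\circ F_p\) is holomorphic on a full neighbourhood of \(p\), so it glues with \(H\) and extends \(H\) across \(p\).

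The main obstacle I expect is the overlap step. One has to check that the region \(\{0<|v_p|\}\) near \(p\) lies in the saturation of the tube around \(A_p\), so that \(H\) and \(\phi_p(F_p)\) agree on an open set of the overlap and not just on one transversal. I would handle this by choosing the tube as a union of leaves meeting \(\{v_p=c\}\), using the explicit leaves \(u_p^{a_p}v_p^{b_p}=\text{const}\) of the monomial model. Leaves through \(\{u_p=0\}\) stay in \(C\); leaves near the other separatrix \(\{v_p=0\}\) are covered by the holomorphic formula \(\phi_p(F_p)\) itself. Doing this at each of the finitely many points of \(S\) yields a holomorphic first integral on a neighbourhood of \(C\simeq\mathbb P^1\), since \(C=C^\circ\cup S\) is compact.

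\textbf{Step 3: primitivity.} If \(\Gamma_p=\Gamma\), then \(a_p=N\) and \(\phi_p\) has order \(N/a_p=1\), so \(\phi_p\) is biholomorphic. Hence \(H=\phi_p\circ F_p\) is primitive at \(p\), because \(F_p\) is primitive there. If \(\#S\le2\), then \(C^\circ\) is \(\mathbb P^1\) minus at most two points, so \(\pi_1(C^\circ)\) is generated by the loop around any single puncture. Thus \(\Gamma_p=\Gamma\) for every \(p\in S\). When \(S=\varnothing\), the group \(\Gamma\) is trivial and the statement is empty.
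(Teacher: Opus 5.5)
Your proposal is correct and follows essentially the same route as the paper. Both arguments linearise the finite cyclic holonomy, transport $t^N$ along $C^\circ$, and compute the local holonomy at $p$ from $F_p$ to write the transported germ as $\varphi_p(F_p)$ with $\operatorname{mult}_0\varphi_p=N/a_p$; they then glue and read off primitivity from $N=a_p\iff\Gamma_p=\Gamma$. The one minor difference is your ``overlap'' step. The paper handles it more cheaply than your explicit leaves $u_p^{a_p}v_p^{b_p}=\mathrm{const}$: it takes a flow box adapted to the transversal $\{v_p=v_0\}$, where both functions factor through the projection to the transversal, and then applies the identity theorem on a connected overlap.
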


\begin{proof}
Since \(Y\) is smooth near \(C\), the singular locus of the saturated
rank-one foliation \(\mathcal G\) is discrete there. Hence
\(S=C\cap\operatorname{Sing}(\mathcal G)\) is finite.

\smallskip
\noindent\textbf{Step 1: Construction near \(C^\circ\).}

Choose \(q\in C^\circ\) and a small transverse disc \(\Sigma\) through
\(q\). We regard the holonomy group as a finite subgroup
\(\Gamma\subset\operatorname{Diff}(\Sigma,q)\). Since a finite subgroup
of \(\operatorname{Diff}(\mathbb C,0)\) is cyclic and holomorphically
linearizable, there is a coordinate \(t\) on \(\Sigma\) in which
\(\Gamma=\{t\mapsto\zeta t\mid\zeta^N=1\}\), where \(N:=|\Gamma|\).
The germ \(H_\Sigma:=t^N\) is invariant under \(\Gamma\).

Let \(r\in C^\circ\), choose a transverse disc \(\Sigma_r\) through
\(r\), and let \(\delta\) be a path in \(C^\circ\) from \(q\) to \(r\).
If \(h_\delta:(\Sigma,q)\to(\Sigma_r,r)\) denotes the corresponding
holonomy map, set
\(H_{\Sigma_r}:=H_\Sigma\circ h_\delta^{-1}\).

This definition is independent of \(\delta\). Indeed, if \(\delta'\) is
another path from \(q\) to \(r\), then
\(g:=h_{\delta'}^{-1}\circ h_\delta\) belongs to \(\Gamma\). Since
\(h_\delta^{-1}=g^{-1}\circ h_{\delta'}^{-1}\), the
\(\Gamma\)-invariance of \(H_\Sigma\) gives
\(H_\Sigma\circ h_\delta^{-1}
 =H_\Sigma\circ h_{\delta'}^{-1}\).

Extending these transverse germs constantly along the plaques in local
flow boxes gives a holomorphic first integral \(H^\circ\) on a
neighbourhood \(U^\circ\) of \(C^\circ\).

\smallskip
\noindent\textbf{Step 2: Local holonomy near a point of \(S\).}

Fix \(p\in S\). In the coordinates appearing in the statement, we have
\(C=(u_p=0)\) and
\(F_p=u_p^{a_p}v_p^{b_p}\), where
\(\gcd(a_p,b_p)=1\). Choose a sufficiently small \(v_0\neq0\), put
\(q_p=(0,v_0)\in C^\circ\), and let
\(\Sigma_p=(v_p=v_0)\) be the transverse disc through \(q_p\). Its
transverse coordinate is \(u_p\). After shrinking, we may assume that
\(\Sigma_p\subset U^\circ\), and we write
\(H_{\Sigma_p}:=H^\circ|_{\Sigma_p}\).

To compute the local holonomy around \(p\), consider the positively
oriented loop \(v_p(s)=v_0e^{2\pi i s}\), for \(0\leq s\leq1\).
The lift starting at \((u_0,v_0)\in\Sigma_p\) remains in the same level
of \(F_p\), and therefore satisfies
\(u_p(s)^{a_p}v_p(s)^{b_p}=u_0^{a_p}v_0^{b_p}\). The solution with
\(u_p(0)=u_0\) is
\[u_p(s)=u_0e^{-2\pi i b_ps/a_p}.\] Thus the local holonomy germ is
\(h_p(u_p)=\xi_pu_p\), where
\(\xi_p=e^{-2\pi i b_p/a_p}\). Since \(a_p\) and \(b_p\) are coprime,
\(\xi_p\) is a primitive \(a_p\)-th root of unity.

Because \(H^\circ\) is a first integral, its restriction to
\(\Sigma_p\) is invariant under this local holonomy. Thus
\(H_{\Sigma_p}(\xi_pu_p)=H_{\Sigma_p}(u_p)\). It follows from the Taylor
expansion that only powers divisible by \(a_p\) occur, so there is a
one-variable holomorphic germ \(\chi_p\) such that
\[
H_{\Sigma_p}(u_p)=\chi_p(u_p^{a_p}).
\]

\smallskip
\noindent\textbf{Step 3: Extension across singularities.}

On \(\Sigma_p\), the primitive first integral satisfies
\(F_p|_{\Sigma_p}=v_0^{b_p}u_p^{a_p}\). Define
\(\varphi_p(s):=\chi_p(v_0^{-b_p}s)\). Then
\(\varphi_p(F_p|_{\Sigma_p})=H_{\Sigma_p}\), and hence
\(H_p:=\varphi_p\circ F_p\) is a holomorphic first integral on a
sufficiently small neighbourhood \(V_p\) of \(p\), with
\(H_p|_{\Sigma_p}=H^\circ|_{\Sigma_p}\).

By construction,
\[
H_p|_{\Sigma_p}=H^\circ|_{\Sigma_p}.
\]
We justify that this equality extends to a neighbourhood of
\(\Sigma_p\). Although the coordinates \((u_p,v_p)\) need not themselves
be flow-box coordinates near \(q_p\), the prescribed disc
\(\Sigma_p=(v_p=v_0)\) is transverse to \(\mathcal G\) at \(q_p\).
Hence, after shrinking, there is a flow box
\(B\subset U^\circ\cap V_p\) adapted to \(\Sigma_p\).

More precisely, if \(X_p\) is a local holomorphic vector field generating
\(\mathcal G\) near \(q_p\), its local flow identifies a product
\[
\Delta\times\Sigma_p \longrightarrow B,
\qquad
(s,z)\longmapsto\Phi_s(z),
\]
biholomorphically with \(B\). Let
\(\pi_p:B\to\Sigma_p\) be the projection along the plaques. Since both
\(H^\circ\) and \(H_p\) are first integrals, they are constant along the
plaques, and hence
\[
H^\circ
=
H^\circ|_{\Sigma_p}\circ\pi_p,
\qquad
H_p
=
H_p|_{\Sigma_p}\circ\pi_p.
\]
Their restrictions to \(\Sigma_p\) agree, so
\(H^\circ=H_p\) on \(B\).

After shrinking \(V_p\), we may assume that
\(U^\circ\cap V_p\) is connected. The identity theorem then gives
\(H^\circ=H_p\) on the whole overlap. Thus the two functions glue to a
holomorphic first integral extending \(H^\circ\) across \(p\).

\smallskip
\noindent\textbf{Step 4: Primitivity of the extended function.}

Choose a path \(\gamma_p\subset C^\circ\) from \(q\) to \(q_p\), and
let \(h_{\gamma_p}:(\Sigma,q)\to(\Sigma_p,q_p)\) be its holonomy map.
By construction,
\(H_{\Sigma_p}=H_\Sigma\circ h_{\gamma_p}^{-1}\). Since
\(h_{\gamma_p}\) is biholomorphic, it preserves multiplicity. Therefore
\[
N
=\operatorname{mult}_{q_p}(H_{\Sigma_p})
=a_p\,\operatorname{mult}_0(\chi_p),
\qquad
\operatorname{mult}_0(\varphi_p)=\frac{N}{a_p}.
\]

Since \(F_p\) is primitive, the first integral
\(H_p=\varphi_p\circ F_p\) is primitive at \(p\) if and only if
\(\varphi_p\) is biholomorphic, equivalently if and only if \(N=a_p\).
If \(\Gamma_p\subseteq\Gamma\) denotes the transported local holonomy
subgroup, then \(|\Gamma_p|=a_p\) and \(|\Gamma|=N\). Thus \(H_p\) is
primitive precisely when \(\Gamma_p=\Gamma\).

Finally, suppose that \(\#S\leq2\). If \(S=\varnothing\), there is
nothing to prove. If \(S=\{p\}\), then
\(C^\circ\simeq\mathbb C\) is simply connected, so both \(\Gamma\) and
\(\Gamma_p\) are trivial. If \(S=\{p_1,p_2\}\), then
\(C^\circ\simeq\mathbb C^*\), and a small loop around either puncture
generates \(\pi_1(C^\circ)\), up to inversion. Hence
\(\Gamma_{p_i}=\Gamma\) for \(i=1,2\), and the constructed first integral
is primitive at every point of \(S\).
\end{proof}
\begin{proposition}[Irreducible exceptional curve case]\label{irreducibleexceptionalfibre}
Let \(X\) be a normal klt surface, let \(x\in X\) be a closed point, and
let \(\mathcal F\) be a non-dicritical foliation which is analytically
\(p\)-closed at \(x\). Suppose that there exists a proper birational
morphism
\(
\pi:Y\longrightarrow X
\)
which is an isomorphism over \(X\setminus\{x\}\), such that \(Y\) is smooth,
the reduced exceptional fibre
\(
E:=\pi^{-1}(x)_{\mathrm{red}}
\)
is irreducible, and the strict transform
\(
\mathcal G:=\pi^{-1}\mathcal F
\)
has only reduced singularities along \(E\). Then \(\mathcal F\) admits a
holomorphic first integral at \(x\).
\end{proposition}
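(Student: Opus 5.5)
We work throughout in a local \(p\)-closed model and aim to apply Proposition~\ref{firstintegralnearcomponent} to \(C:=E\). Choose a local \(p\)-closed model \((X_0,\mathcal F_0,x_0)\) of \((X,\mathcal F,x)\). The resolution \(\pi\) is only analytic a priori, so we first realise it algebraically: \(\pi\) is a composition of blow-ups (or is determined by the finitely many exceptional valuations over \(x\)). Hence, after shrinking \(X_0\), there is an algebraic proper birational morphism \(\pi_0:Y_0\to X_0\), isomorphic over \(X_0\setminus\{x_0\}\), whose analytification near \(\pi_0^{-1}(x_0)\) is identified with \(\pi\) near \(E\). By the lemma on strict transforms, \(\mathcal G_0:=\pi_0^{-1}\mathcal F_0\) is \(p\)-closed for almost all primes. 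By Corollary~\ref{pclosebirational}, \(\mathcal G\) is analytically \(p\)-closed at every point of \(E\).

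\textbf{Step 1: identify \(C\) and the local models.} Non-dicriticality gives that \(E\) is \(\mathcal G\)-invariant. Since \(X\) is klt and \(\pi\) is a resolution, \(E\simeq\mathbb P^1\); this follows from klt surface singularities being quotient singularities, whose minimal-type resolution graphs consist of rational curves. We may take \(\pi\) so that this holds. At each \(p\in S:=E\cap\operatorname{Sing}(\mathcal G)\) the singularity is reduced and analytically \(p\)-closed. Lemma~\ref{pclosednotsaddle} is the relevant tool here: reduced singularities are log canonical, so \(p\) is not a saddle-node. Reduced non-degenerate singularities are canonical, so Theorem~\ref{canonicalcases}\textup{(1)} applies. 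It gives coordinates with \(F_p=u_p^{a_p}v_p^{b_p}\) primitive, \(a_p,b_p\) coprime positive. The two separatrices are the coordinate axes, and \(E\) is one of them, so after renaming \(E=(u_p=0)\).

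\textbf{Step 2: finite holonomy.} Apply Theorem~\ref{theoremA} to the algebraic model: \(Y_0\) is smooth near \(\pi_0^{-1}(x_0)\) (shrink to a smooth open neighbourhood), \(\mathcal G_0\) is a corank one foliation \(p\)-closed for almost all primes, and the exceptional curve \(E_0\) is an invariant prime divisor. Thus \(E^\circ=E\setminus S\) has finite holonomy. One should check that the holonomy of \(E^\circ\) transported analytically agrees with that of \(E_0^\circ\), which is immediate from the analytic identification.

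\textbf{Step 3: descend.} Proposition~\ref{firstintegralnearcomponent} yields a holomorphic first integral \(H\) on a neighbourhood \(V\) of \(E\). Since \(E\) is connected and compact and \(H\) is constant on the invariant curve \(E\), we may assume \(H|_E=0\). Because \(\pi\) is proper and an isomorphism off \(E\), shrinking \(V\) to \(\pi^{-1}(U)\) for a neighbourhood \(U\) of \(x\), the function \(H\) is constant on the fibres of \(\pi\). By normality of \(X\) and Riemann extension, \(H=h\circ\pi\) with \(h\) holomorphic on \(U\). Equivalently, \(\pi_*\mathcal O_Y=\mathcal O_X\) by Zariski's main theorem. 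Then \(h\) is nonconstant and constant along \(\mathcal F\) on \(U\setminus\{x\}\), so it induces \(\mathcal F\) there. By saturation, it is a holomorphic first integral at \(x\).

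The main obstacle is the justification in the opening paragraph and Step 2: producing an algebraic model of the resolution compatible with the local \(p\)-closed model, so that Theorem~\ref{theoremA} (which is algebraic and global on the invariant divisor) applies to the analytic curve \(E\). I would handle it by noting that blow-ups at points are algebraic and that the analytic isomorphism of germs lifts to the blow-ups. If \(\pi\) is not a priori a sequence of point blow-ups, I would factor it through the minimal resolution, which is algebraic over \(X_0\). A secondary point is ensuring \(E\simeq\mathbb P^1\) and that \(E\) coincides with one of the two separatrices at each \(p\in S\).
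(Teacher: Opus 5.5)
Your proposal is correct and follows essentially the same route as the paper. Both arguments transfer the modification to a local algebraic \(p\)-closed model. Both use Lemma~\ref{pclosednotsaddle} and Theorem~\ref{canonicalcases}\textup{(1)} to get primitive monomial first integrals at the singular points on \(E\), Theorem~\ref{theoremA} for finite holonomy, Proposition~\ref{firstintegralnearcomponent} for a first integral near \(E\), and \(\pi_*\mathcal O_Y=\mathcal O_X\) to descend it to \(x\).

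One small correction: the phrase ``we may take \(\pi\) so that this holds'' is not needed. \(\pi\) is given, and \(E\simeq\mathbb P^1\) holds for it anyway, because klt surface singularities are rational and every exceptional curve of any resolution of a rational surface singularity is a smooth rational curve.
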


\begin{proof}
Choose a local algebraic \(p\)-closed model. The analytic isomorphism of normal
surface germs lifts to the minimal resolutions and to the subsequent point blow-ups,
so we may perform the given modification on the local model. By
Lemma~\ref{pclosebirational}, the strict transform \(\mathcal G\) is
\(p\)-closed for almost all primes and analytically \(p\)-closed at every point of
\(E\).

Since \(X\) is klt and \(E\) is irreducible, the curve \(E\) is a smooth
rational curve. By non-dicriticality, it is \(\mathcal G\)-invariant. Its singularities along \(E\) are
reduced, and Lemma~\ref{pclosednotsaddle} excludes saddle-nodes. Hence
Theorem~\ref{canonicalcases}\textup{(1)} gives a primitive monomial holomorphic
first integral at every point of
\(E\cap\operatorname{Sing}(\mathcal G)\).

By Theorem~\ref{theoremA}, the holonomy group of
\(
E\setminus\operatorname{Sing}(\mathcal G)
\)
is finite. Proposition~\ref{firstintegralnearcomponent} therefore gives a
holomorphic first integral
\(
H:U\longrightarrow\mathbb C
\)
on a neighbourhood \(U\) of \(E\).

The function \(H\) is constant on the invariant curve \(E\). Since \(X\)
is normal and \(\pi\) is proper birational,
\(
\pi_*\mathcal O_Y=\mathcal O_X.
\)
After shrinking around \(x\), the function \(H\) therefore descends to a
holomorphic function near \(x\). Away from \(x\), this function induces
\(\mathcal F\), and by saturation it induces \(\mathcal F\) on the whole
germ. Thus it is a holomorphic first integral at \(x\).
\end{proof}

\subsection{Non-dicritical surface applications}

We now introduce the reduction condition used in the reducible exceptional case and
prove the local surface application of Theorem~\ref{theoremA}.

\begin{definition}[Reduction and non-branching reduction]
\label{definitionnonbranching}
Let \((X,\mathcal F,x)\) be a non-dicritical foliated surface germ. By a
\emph{reduction of singularities over \(x\)} we mean a proper birational morphism
\(
\pi:(Y,\mathcal G)\longrightarrow(X,\mathcal F)
\)
which is an isomorphism away from \(x\), such that \(Y\) is smooth, the reduced
exceptional curve
\(
E:=\pi^{-1}(x)_{\mathrm{red}}
\)
is a simple normal crossing divisor, and the transformed foliation
\(\mathcal G:=\pi^{-1}\mathcal F\) has only reduced singularities along \(E\).

Writing \(E=E_1\cup\cdots\cup E_m\), we call the reduction
\emph{non-branching} if
\(
\#\bigl(E_i\cap\operatorname{Sing}(\mathcal G)\bigr)\le2
\)
for every irreducible component \(E_i\).
\end{definition}

\begin{proof}[Proof of Corollary~\ref{nondicriticalsurfaceapplication}]
Choose a local algebraic \(p\)-closed model. A reduction of a surface foliation is
obtained from the minimal resolution followed by point blow-ups; the analytic
isomorphism of germs lifts to these modifications. We may therefore perform the given
reduction on the local algebraic model and keep the same notation. By
Lemma~\ref{pclosebirational}, the transformed foliation \(\mathcal G\) is
\(p\)-closed for almost all primes and analytically \(p\)-closed at every point of
\(E\). If the exceptional fibre is irreducible, the
conclusion follows from Proposition~\ref{irreducibleexceptionalfibre}; hence, in
the remainder of the proof, we may assume that the reduction is non-branching and
that the exceptional fibre has more than one irreducible component.
Since \(X\) is klt, every irreducible component of
\(E\) is a smooth rational curve, and the intersection graph of the exceptional
components is connected and contains no cycles; see
\cite[Proposition~4.18]{KollarMori1998}. By non-dicriticality, every exceptional
component is \(\mathcal G\)-invariant.

Its singularities along \(E\) are reduced,
and Lemma~\ref{pclosednotsaddle} excludes saddle-nodes. Thus they are
non-degenerate reduced, hence canonical, surface singularities.
Theorem~\ref{canonicalcases}\textup{(1)} therefore supplies a primitive monomial first
integral at each singular point of \(\mathcal G\) on \(E\).

Theorem~\ref{theoremA} consequently gives finite holonomy along the regular part of
every irreducible component of \(E\).

Write
\(
E=E_1\cup\cdots\cup E_m.
\)
For every \(i\), Proposition~\ref{firstintegralnearcomponent} gives a holomorphic
first integral
\(
H_i:U_i\longrightarrow\mathbb C
\)
on a neighbourhood of \(E_i\). Since \(E_i\) contains at most two singular points,
the final assertion of that proposition allows \(H_i\) to be chosen primitive at
every singular point on \(E_i\).

It remains to combine the functions constructed near the different exceptional
components. If \(E_i\) and \(E_j\) meet at a point \(p\), choose a primitive local
first integral \(F_p\) there. By Lemma~\ref{primitivefirstintegrals},
\[
H_i=\alpha_{i,p}\circ F_p,
\qquad
H_j=\alpha_{j,p}\circ F_p
\]
for one-variable biholomorphic germs \(\alpha_{i,p}\) and \(\alpha_{j,p}\).

Reorder the components so that, for every \(k\ge2\), the curve \(E_k\) meets
\(E_1\cup\cdots\cup E_{k-1}\) at exactly one point \(p_k\), lying on a unique
component \(E_{j(k)}\) with \(j(k)<k\). Such an ordering exists because the
intersection graph is a finite connected graph without cycles.

Keep \(H_1\) fixed. Suppose inductively that the first integrals near
\(E_1,\ldots,E_{k-1}\) have been reparametrised so that they agree on all their
overlaps. Near \(p_k=E_k\cap E_{j(k)}\), choose a one-variable biholomorphic germ
\(\theta_k\) such that
\(
H_{j(k)}=\theta_k\circ H_k.
\)
Replacing \(H_k\) by \(\theta_k\circ H_k\) makes the two functions agree near
\(p_k\). Since \(E_k\) meets the previously treated components only at \(p_k\),
this replacement does not affect any earlier compatibility. Proceeding inductively
and shrinking the neighbourhoods \(U_i\) so that two of them meet only near the
corresponding intersection point, the functions glue to a holomorphic first integral
\(
H:U\longrightarrow\mathbb C
\)
on a neighbourhood of the whole exceptional fibre.

Every component of \(E\) is invariant, so \(H\) is constant on each component and,
since \(E\) is connected, on all of \(E\). Again
\(\pi_*\mathcal O_Y=\mathcal O_X\), so \(H\) descends to a holomorphic first
integral of \(\mathcal F\) near \(x\).
\end{proof}

\subsection{Simple singularities in arbitrary dimension}

We now apply the finite-holonomy theorem to general surface sections of simple
singularities. The first proposition describes the surface singularities obtained from a
simple singularity of arbitrary dimensional type.

\begin{proposition}\label{simplesurfacesection}
Let \(X\) be a smooth variety and let \(\mathcal F\) be a corank one foliation with a
simple singularity of dimensional type \(r\) at a closed point \(x\in X\).
\begin{enumerate}[label=\textup{(\arabic*)}]
\item If \(r=2\), then the restriction of \(\mathcal F\) to a sufficiently general
smooth surface through \(x\) has a reduced singularity at \(x\).
\item If \(r\geq3\), then, for a sufficiently general smooth surface \(S\) through
\(x\), the blow-up
\(
\pi:\widetilde S\longrightarrow S
\)
at \(x\) has an invariant exceptional curve \(E\simeq\mathbb P^1\), and the
transformed foliation has exactly \(r\) reduced singularities on \(E\).
\end{enumerate}
\end{proposition}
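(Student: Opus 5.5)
The plan is to work directly with the formal normal form of Definition~\ref{definitionsimple} and to restrict it to a general linear surface. Choose formal coordinates \(x_1,\ldots,x_n\) at \(x\) in which \(N^*_{\mathcal F}\) is generated by the model form \(\omega\) of type (1) or (2). The form only involves \(x_1,\ldots,x_r\), so \(\mathcal F\) is formally the pull-back of an \(r\)-dimensional model under the projection to the first \(r\) coordinates. A sufficiently general smooth surface \(S\) through \(x\) has tangent plane in general position with respect to this projection. Hence \(S\) maps formally isomorphically onto a general smooth surface germ in \((\mathbb C^r,0)\), and we may take \(n=r\). Since reducedness of the singularity at \(x\) and the structure of the first blow-up depend only on finite jets, working formally is harmless. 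Write \(S\) as the image of a formal embedding \((s,t)\mapsto (x_1,\ldots,x_r)=(\ell_1(s,t)+\cdots,\ldots,\ell_r(s,t)+\cdots)\) with general linear parts \(\ell_i\).

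\textbf{Case \(r=2\).} The surface \(S\) is a general section of a product, so it is transverse to the fibres of the projection \((x_1,x_2,\ldots)\mapsto(x_1,x_2)\), and the restriction is formally isomorphic to the two-dimensional model. In type (1) this is \(\lambda_2x_1\partial_{x_1}-\lambda_1x_2\partial_{x_2}\). The non-resonance condition with \(a\in\mathbb Z_{\ge0}^2\) gives \(\lambda_1/\lambda_2\notin\mathbb Q_{<0}\), so the eigenvalue ratio \(-\lambda_1/\lambda_2\notin\mathbb Q_{>0}\), and the singularity is reduced. In type (2) there are two subcases. If \(k=2\), the linear part is \(p_2x_1\partial_1-p_1x_2\partial_2\), which is reduced with negative rational ratio. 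If \(k=1\), one eigenvalue is \(p_1\neq0\), and the singularity is reduced, a saddle-node or non-degenerate according to \(\psi\). In all cases the conclusion is just a check of eigenvalues.

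\textbf{Case \(r\ge3\).} The \(1\)-form restricted to \(S\) starts in degree \(r-1\): the coefficients of \(\omega\) are products of \(r-1\) coordinates. Its leading homogeneous part is
\[
\omega_0|_S=\ell_1\cdots\ell_r\sum_i\lambda_i\frac{d\ell_i}{\ell_i}
\]
in type (1); in type (2) the \(\lambda_i\) are replaced by \(p_i\) for \(i\le k\) and by \(0\) (higher order) for \(i>k\). Blow up \(x\) and use the chart \(t=us\). The radial contraction \(\iota_R\omega_0\) equals \((\sum_i\lambda_i)\ell_1\cdots\ell_r\) up to restriction. This is a nonzero multiple of \(\prod\ell_i\), because \(\sum\lambda_i\neq0\) by non-resonance with \(a=(1,\ldots,1)\), or because \(\sum p_i>0\). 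Nonvanishing of the tangency polynomial shows that \(E\) is invariant, i.e. the blow-up is non-dicritical. The singular points of the transformed foliation on \(E\) are then exactly the zeros of the tangency polynomial \(\prod_i\ell_i\) on \(E\simeq\mathbb P^1\). For a general \(S\) the linear forms \(\ell_i|_{T_xS}\) are pairwise non-proportional, so there are exactly \(r\) distinct points \(p_i=\{\ell_i=0\}\).

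The main obstacle is verifying that each \(p_i\) is reduced. Near \(p_i\) one computes, in coordinates \((s,u)\) with \(E=(s=0)\) and \(\ell_i\propto s(u-u_i)\), that the transformed form is, up to a unit, \(\lambda_i\,d(u-u_i)/(u-u_i)+(\sum_j\lambda_j)\,ds/s\) plus higher-order terms. The eigenvalue ratio is therefore \(-\lambda_i/\sum_j\lambda_j\), or its analogue with \(p\)'s and \(\alpha\)'s. We must show this is not in \(\mathbb Q_{>0}\). If \(\lambda_i/\sum_j\lambda_j=-m/n\) with \(m,n\in\mathbb Z_{>0}\), then \(n\lambda_i+m\sum_j\lambda_j=0\) is a nontrivial nonnegative integer relation, which contradicts non-resonance. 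In type (2) with \(i\le k\) the ratio is \(-p_i/\sum p_j<0\), so that case is fine. For \(i>k\) the coefficient \(\lambda_i\) is zero at leading order, giving a saddle-node, which is reduced. A minor extra check is that the higher-order terms of \(S\) do not alter these linear parts; they do not, since the linear parts are determined by the leading part \(\omega_0\).
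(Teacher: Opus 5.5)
\textbf{Gap: saturation at the points \(p_i\) with \(i>k\) in type~(2), \(k<r\).}
Your route is the paper's. You reduce to finite jets and check eigenvalues directly for \(r=2\). For \(r\ge3\) you blow up, read off the tangency polynomial \(R\prod_i\ell_i\) with \(R=\sum_j\rho_j\neq0\) (where \(\rho_i\) is your leading coefficient \(\lambda_i\), \(p_i\) or \(0\)), and compute the eigenvalues \(\rho_i\) and \(-R\). The gap is at the points \(p_i\) named above.

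Two of your claims presuppose that \(\widetilde\eta:=\pi^*(\omega|_S)/s^{r-1}\) is a \emph{saturated} generator of the transformed foliation near \(p_i\):
\begin{enumerate}
\item that the singular points on \(E\) are exactly the \(r\) zeros of \(\prod_i\ell_i\);
\item that \(p_i\) is a saddle-node for \(i>k\).
\end{enumerate}
Where \(\rho_i\neq0\), saturation is automatic: the linear part is invertible, so the zero is isolated. Where \(\rho_i=0\) it is not. The \(d\tau\)-coefficient (\(\tau=u-u_i\)) is \(s\) times a function vanishing at \(p_i\), and the linear part has a zero eigenvalue.

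Saturation also cannot be read off from \(\omega_0\). In type~(2) the leading part \(\ell_1\cdots\ell_r\sum_{j\le k}p_j\,d\ell_j/\ell_j\) is itself divisible by \(\ell_{k+1}\cdots\ell_r\). If the higher-order terms made \(\widetilde\eta\) divisible by an equation of the strict transform of \(\{\ell_i=0\}\), the point \(p_i\) would disappear after saturation. Nothing in your argument excludes this.

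In fact your argument would run verbatim for \(\psi\equiv0\). There, for \(k=1\), the foliation is really given by \(dx_1\), regular at \(x\), and after the blow-up \(E\) carries one singular point rather than \(r\). So your closing remark that the linear parts are determined by \(\omega_0\) is true for the unsaturated form. It does not decide which form generates the foliation.

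\textbf{How to close it.} The missing input is \(\psi\not\equiv0\), forced by the dimensional type being \(r\), together with \(\alpha_i\neq0\) for \(i>k\), forced by non-resonance. The paper uses these to show that the summand \(\alpha_i\psi(T)\prod_{j\neq i}x_j\,dx_i\), with \(T=s^Ru(s,\tau)\) and \(u(0,0)\neq0\), contributes a term \(\kappa s^{\nu R+1}\,d\tau\) with \(\kappa\neq0\). Hence the \(d\tau\)-coefficient is not divisible by \(\tau\), while the \(ds\)-coefficient is not divisible by \(s\), and the paper concludes that the two coefficients have no common factor.

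Alternatively, since \(\omega\) generates \(N^*_{\mathcal F}\), you could choose \(S\) general enough that \(\omega|_S\) has an isolated zero at \(x\). This is the transversality already demanded in Lemma~\ref{MM80}. Because \(\pi\) is an isomorphism off \(E\) and \(\widetilde\eta\) is not divisible by \(s\), any common factor of its coefficients would produce a curve of zeros of \(\omega|_S\) through \(x\).

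With either step added, your eigenvalue computation is valid.

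\textbf{Minor point.} For \(r=2\), \(k=1\), the singularity is always a saddle-node. Since \(\psi(0)=0\), the \(dv\)-coefficient \(\alpha_2u\psi(u^{p_1})\) has order at least two, so there is no non-degenerate alternative.
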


\begin{proof}
The simple normal form is a priori formal. The calculation below,
however, uses only a finite jet of the defining form and of the surface
section. In type~\textup{(2)}, we also retain the first nonzero term of
\(\psi\) whenever it is needed. Choose an integer \(N\) larger than all
the orders occurring in the calculation, and truncate the formal
coordinate change and the formal unit relating the defining form to its
normal form at order \(N\). Since the linear part of the formal
coordinate change is invertible, its truncation defines an analytic
coordinate system near \(x\). Thus we may carry out the calculation in
analytic coordinates in which the defining form has the same relevant
finite jet as the formal normal form.

Assume first that \(r=2\). Choose a sufficiently general smooth surface
\(S\) through \(x\) such that
\(
u:=x_1|_S,
v:=x_2|_S
\)
are local coordinates at \(x\).

For a singularity of type~\textup{(1)}, the linear part of the
restricted defining form is
\(
\lambda_1v\,du+\lambda_2u\,dv.
\)
A local vector field generating the restricted foliation therefore has
eigenvalues \(\lambda_2\) and \(-\lambda_1\). They are both nonzero. If
their quotient were a positive rational number, there would be
positive integers \(a,b\) such that
\(a\lambda_1+b\lambda_2=0,
\)
contrary to the non-resonance condition.

For a singularity of type~\textup{(2)}, if \(k=2\), the linear part is
\(
p_1v\,du+p_2u\,dv,
\)
and the two eigenvalues are \(p_2\) and \(-p_1\), whose quotient is
negative. If \(k=1\), the restricted form has the shape
\[
p_1v\,du
+
\alpha_2u\psi(u^{p_1})\,dv
+
\text{higher-order terms}.
\]
Here \(\psi\not\equiv0\); otherwise the defining form would not have
dimensional type two after saturation. The corresponding vector field
has one zero eigenvalue and one nonzero eigenvalue, and hence the
singularity is a saddle-node. Thus the restricted singularity is
reduced in every case, proving~\textup{(1)}.

Assume now that \(r\geq3\). Let \(z,w\) be coordinates on a sufficiently
general smooth surface \(S\) through \(x\). Write the linear parts of
the restrictions of the active coordinates as
\[
\ell_i(z,w)=a_i z+b_i w,
\qquad i=1,\ldots,r.
\]
The generality of \(S\) ensures that the \(\ell_i\) are nonzero and
pairwise nonproportional. These conditions are open conditions on the
tangent plane \(T_xS\).

For a singularity of type~\textup{(1)}, put
\(\rho_i:=\lambda_i\). For one of type~\textup{(2)}, put
\[
\rho_i=
\begin{cases}
p_i,&1\leq i\leq k,\\
0,&k<i\leq r.
\end{cases}
\]
In either case, the lowest-degree homogeneous part of the restricted
defining form is
\[
\eta_{r-1}
=
\ell_1\cdots\ell_r
\sum_{i=1}^r
\rho_i\frac{d\ell_i}{\ell_i}.
\]
Indeed, in type~\textup{(2)} the terms involving \(\psi\) have strictly
larger degree because \(\psi(0)=0\). Set
\(
R:=\sum_{i=1}^r\rho_i.
\)
In type~\textup{(1)}, the non-resonance condition, applied to
\((1,\ldots,1)\), gives \(R\neq0\). In type~\textup{(2)},
\[
R=p_1+\cdots+p_k>0.
\]

Blow up the origin of \(S\). After making a linear change of the
coordinates \(z,w\), we may suppose that none of the \(r\) directions
\(\ell_i=0\) is the point at infinity in the chart
\(
z=s,
w=st.
\)
Put
\(
L_i(t):=a_i+b_i t,
P(t):=\prod_{i=1}^rL_i(t),
\)
and
\[
Q(t)
:=
\sum_{i=1}^r
\rho_i b_i
\prod_{j\neq i}L_j(t).
\]
After dividing the pull-back of the restricted form by its common
exceptional factor \(s^{r-1}\), we obtain
\[
\widetilde\eta
=
\bigl(RP(t)+sA(s,t)\bigr)\,ds
+
s\bigl(Q(t)+sB(s,t)\bigr)\,dt
\tag{\(\dagger\)}
\]
for suitable holomorphic functions \(A\) and \(B\). In particular, the
coefficient of \(dt\) is divisible by \(s\), so the exceptional curve
\(E=(s=0)\) is invariant.

We next check that \((\dagger)\) is saturated along \(E\). Let \(t_i\)
be the zero of \(L_i\). Since the \(\ell_i\) are pairwise
nonproportional, \(t_i\) is a simple zero of \(P\), and
\[
Q(t_i)=\rho_iP'(t_i).
\tag{\(\ddagger\)}
\]

If \(\rho_i\neq0\), then \(Q(t_i)\neq0\). Thus the two coefficients in
\((\dagger)\) have no common factor near \((0,t_i)\).

It remains to consider the case \(\rho_i=0\). This occurs only in
type~\textup{(2)}, with \(i>k\). In particular, the non-resonance
condition implies that \(\alpha_i\neq0\). Moreover,
\(\psi\not\equiv0\); otherwise the common factor
\(x_{k+1}\cdots x_r\) could be removed and the dimensional type would be
at most \(k<r\). Write
\[
\psi(T)=cT^\nu+O(T^{\nu+1}),
\qquad
c\neq0,
\qquad
T=x_1^{p_1}\cdots x_k^{p_k}.
\]
Near the point \((0,t_i)\), put \(\tau=t-t_i\). Since \(i>k\), all the
functions \(L_1,\ldots,L_k\) are nonzero at \(t_i\), and therefore
\[
T=s^R u(s,\tau),
\qquad
u(0,0)\neq0.
\]
The summand
\(
\alpha_i\psi(T)
\prod_{j\neq i}x_j\,dx_i
\)
of the original defining form contributes, after division by
\(s^{r-1}\), a term
\(
\kappa\,s^{\nu R+1}\,d\tau
\)
modulo terms divisible by \(\tau\) or of higher \(s\)-order, where
\(\kappa\neq0\). On the other hand, the lowest-order \(ds\)-coefficient
is
\(
RP'(t_i)\tau
\)
modulo \((s,\tau^2)\). Hence the \(ds\)-coefficient is not divisible by
\(s\), while the \(d\tau\)-coefficient is not divisible by \(\tau\).
The two coefficients therefore have no common factor at
\((0,t_i)\).

Consequently, \((\dagger)\) is a saturated local generator of the
transformed foliation along \(E\). Its restriction to \(E\) is
\(
\widetilde\eta|_E=RP(t)\,ds.
\)
Since \(R\neq0\), the singular points of the transformed foliation on
\(E\) are precisely the zeros of \(P\). Projectively, these zeros are
the \(r\) distinct points of
\(
E=\mathbb P(T_xS)\simeq\mathbb P^1
\)
defined by \(\ell_i=0\). Thus the transformed foliation has exactly
\(r\) singular points on \(E\).

It remains to prove that these singularities are reduced. Fix \(i\)
and use \(\tau=t-t_i\) as a local coordinate on \(E\). Since
\[
P(t)=P'(t_i)\tau+O(\tau^2)
\]
and \((\ddagger)\) holds, a local vector field generating the
transformed foliation has, up to multiplication by a nonzero scalar, a
triangular linear part whose eigenvalues are
\(
\rho_i\) and
\(
-R.
\)

In type~\textup{(1)}, suppose that
\(-R/\rho_i\in\mathbb Q_{>0}\). Then there are positive integers
\(m,n\) such that
\(
nR+m\rho_i=0.
\)
Since \(R=\lambda_1+\cdots+\lambda_r\) and
\(\rho_i=\lambda_i\), this gives a nontrivial relation among the
\(\lambda_j\) with nonnegative integral coefficients, contradicting
the non-resonance condition. Hence every singularity is
non-degenerate and reduced.

In type~\textup{(2)}, if \(i\leq k\), then
\(\rho_i=p_i>0\), whereas \(-R<0\). The quotient of the two eigenvalues
is therefore negative, so the singularity is non-degenerate and
reduced. If \(i>k\), then \(\rho_i=0\) and the other eigenvalue
\(-R\) is nonzero. The singularity is therefore a saddle-node, and in
particular is reduced.

This proves~\textup{(2)}.
\end{proof}

\begin{proof}[Proof of Corollary~\ref{simpleintegrability}]
Let \(r\) be the dimensional type of the simple singularity. Choose a local
\(p\)-closed model
\(
(X_0,\mathcal F_0,x_0)
\)
of \((X,\mathcal F,x)\), together with an analytic isomorphism of the two
foliated germs. Since \((X,x)\) is smooth, after shrinking \(X_0\) around
\(x_0\) we may assume that \(X_0\) is smooth. Choose a sufficiently general
smooth algebraic complete-intersection surface
\(
x_0\in S_0\subset X_0
\)
which satisfies Proposition~\ref{simplesurfacesection} and the transversality
hypotheses of Lemma~\ref{MM80}. Let \(S\subset X\) be the corresponding
analytic surface germ. By the proof of Lemma~\ref{hypersurfacesectionpclosed}, applied
successively, \(\mathcal F_0|_{S_0}\) is \(p\)-closed for almost all primes.
Consequently, \(\mathcal F|_S\) is analytically \(p\)-closed at \(x\).

Suppose first that \(r=2\). By Proposition~\ref{simplesurfacesection}, the
restricted foliation \(\mathcal F_0|_{S_0}\) has a reduced singularity at
\(x_0\), in particular canonical. By
Theorem~\ref{canonicalcases}\textup{(1)}, \(\mathcal F_0|_{S_0}\) admits a
holomorphic first integral at \(x_0\). Transporting it through the analytic
isomorphism gives a holomorphic first integral of \(\mathcal F|_S\) at \(x\).

Assume now that \(r\geq3\). Let
\(
\pi:\widetilde S_0\longrightarrow S_0
\)
be the blow-up of \(x_0\), and set
\(
\mathcal G_0:=\pi^{-1}(\mathcal F_0|_{S_0}).
\)
By Proposition~\ref{simplesurfacesection}, the exceptional curve
\(E\simeq\mathbb P^1\) is \(\mathcal G_0\)-invariant and
\(\mathcal G_0\) has exactly \(r\) reduced singularities on \(E\). By
Lemma~\ref{pclosebirational}, \(\mathcal G_0\) is analytically \(p\)-closed at every point of \(E\). 
Every singularity of \(\mathcal G_0\) on \(E\) is reduced. So Corollary \ref{nondicriticalsurfaceapplication} gives a local holomorphic first integral of $\FF|_{S}$. In both cases, Lemma~\ref{MM80} lifts the surface first integral to a
holomorphic first integral of \(\mathcal F\) at \(x\).
\end{proof}

\section{Formal first integrals for rank-one foliations}\label{formalfirstintegral}
In contrast with the corank one case, formal integrability need not imply holomorphic
integrability for rank-one foliations, even when the singular locus is isolated; see
\cite{SilvaEtAl2024}. We nevertheless obtain a complete set of formal first integrals
under analytic $p$-closedness and canonical singularities.

\begin{theorem}\label{rankone1}
Let $X$ be a smooth variety of dimension $n$, and let $\mathcal F$ be a rank one
foliation on $X$ with canonical singularities. Let $x\in X$ be a closed point. If
$\mathcal F$ is analytically $p$-closed at $x$, then there exists a formal morphism
\[
\hat f:\widehat X\longrightarrow \widehat{\mathbb C^{\,n-1}}_0
\]
such that
\[
\ker d\hat f=\widehat{\mathcal F},
\]
where $\widehat X$ and $\widehat{\mathcal F}$ denote completion at $x$, and
$\widehat{\mathbb C^{\,n-1}}_0$ denotes the formal completion of $\mathbb C^{n-1}$
at the origin.
\end{theorem}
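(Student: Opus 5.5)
The plan is to reduce to the formal normal form of Proposition~\ref{mcq08} and then write down monomial invariants explicitly. If \(x\notin\operatorname{Sing}(\mathcal F)\), the holomorphic Frobenius theorem gives coordinates in which \(\mathcal F\) is generated by \(\partial_{x_n}\), and \(\hat f=(x_1,\ldots,x_{n-1})\) already works. So assume \(x\in\operatorname{Sing}(\mathcal F)\). Choose a local \(p\)-closed model \((Y,\mathcal G,y)\). Canonicity is analytic-local, so \(\mathcal G\) has canonical, hence log canonical, singularities at \(y\). Proposition~\ref{mcq08} then gives formal coordinates \(x_1,\ldots,x_n\) at \(y\) and integers \(\lambda_1,\ldots,\lambda_s\), with \(\lambda_i=0\) for \(i>s\), such that \(\widehat{\mathcal G}\) is generated by \(\delta=\sum_i\lambda_ix_i\partial_{x_i}\). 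Transporting through the analytic isomorphism, which induces an isomorphism of completions, we may work with \(\delta\) on \(\widehat X\).

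Next, I would constrain the weights using canonicity. Since \(\mathcal F\) is saturated, its singular locus has codimension \(\ge2\). The coordinates with zero weight therefore already give \(n-s\) formal first integrals \(x_{s+1},\ldots,x_n\), and the problem reduces to the weight vector \(\lambda=(\lambda_1,\ldots,\lambda_s)\in\mathbb Z^s\setminus\{0\}\). I claim that canonicity forces \(\lambda\) to have entries of both signs, or zeros, and more precisely that \(\lambda\) is not contained in an open half-space \(\{\lambda_i>0\ \forall i\}\). The argument mirrors the surface case: if all \(\lambda_i\) with \(i\le s\) had the same sign, a weighted blow-up with weights \(|\lambda_i|\) would produce a non-invariant exceptional divisor with discrepancy \(-1\) (McQuillan, Fact I.i.19, generalised). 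This would contradict canonicity, at least after checking the discrepancy computation for a weighted blow-up of the diagonal field, which is an eigenvector of a torus action.

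Given this, consider the lattice \(M=\{m\in\mathbb Z^s:\langle\lambda,m\rangle=0\}\) of rank \(s-1\). Every monomial \(x^m\) with \(m\in M\) satisfies \(\delta(x^m)=\langle\lambda,m\rangle x^m=0\). I need \(s-1\) such monomials with exponents in \(\mathbb Z_{\ge0}^s\) and linearly independent, so that their differentials together with \(dx_{s+1},\ldots,dx_n\) are independent at a generic point. This holds if \(M\cap\mathbb R^s_{\ge0}\) spans \(M\otimes\mathbb R\), that is, if the cone \(\{\langle\lambda,\cdot\rangle=0\}\cap\mathbb R^s_{\ge0}\) is full-dimensional in the hyperplane. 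That condition holds exactly when \(\lambda\) has at least one strictly positive and at least one strictly negative entry among the nonzero weights, which is what canonicity provides, since zero weights were already split off. Concretely, for \(\lambda_i>0>\lambda_j\) take \(x_i^{|\lambda_j|}x_j^{\lambda_i}\). Fix one positive index \(i_0\) and one negative index \(j_0\), and pair every other index with whichever of these has the opposite sign. This gives \(s-1\) monomials whose exponent vectors are visibly independent. Set \(\hat f\) to be these monomials together with \(x_{s+1},\ldots,x_n\).

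Finally, I must check that \(\ker d\hat f=\widehat{\mathcal F}\) rather than just containment. Both sides are saturated of rank one in \(\widehat T_X\): the kernel of a map to a free module is saturated, and \(d\hat f\) has generic rank \(n-1\) by the independence of the exponents, via a logarithmic Jacobian computation on the torus. Since \(\delta\) lies in the kernel and \(\widehat{\mathcal F}\) is saturated, the two sheaves agree. I expect the main obstacle to be the canonicity step, namely ruling out weights of constant sign in arbitrary dimension via a discrepancy computation. If that is awkward, a fallback is to restrict to a general surface and use the surface result of Theorem~\ref{canonicalcases}\textup{(1)}, though canonicity of restrictions for rank-one foliations would itself need justification.
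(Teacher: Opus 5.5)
Your proposal is correct and follows essentially the same route as the paper: reduce via Proposition~\ref{mcq08} to $\sum\lambda_i x_i\partial_{x_i}$, use canonicity to get nonzero weights of both signs, and take exactly the same first integrals ($x_{i_0}^{|\lambda_j|}x_j^{\lambda_{i_0}}$, $x_i^{|\lambda_{j_0}|}x_{j_0}^{\lambda_i}$ and the zero-weight coordinates). The only differences are that the paper simply asserts the sign condition from canonicity, where you sketch a weighted blow-up justification. The paper also proves $\ker d\hat f\subseteq\widehat{\mathcal F}$ by an explicit divisibility computation giving $D=hV$, whereas you use a saturation-plus-generic-rank argument, which is equally valid.
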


\begin{proof}
If $\mathcal F$ is regular at $x$, then the claim follows from the formal Frobenius
theorem. Thus we may assume that $x\in \operatorname{Sing}(\mathcal F)$.

Choose a local $p$-closed model of $(X,\mathcal F,x)$. By Proposition~\ref{mcq08}, after
passing to the formal completion at $x$, there exist formal coordinates
$x_1,\ldots,x_n$ and integers $\lambda_1,\ldots,\lambda_n$ such that
$\widehat{\mathcal F}$ is generated by
\[
V=\sum_{i=1}^n \lambda_i x_i\frac{\partial}{\partial x_i}.
\]
Here we allow some of the $\lambda_i$ to be zero; equivalently, if the normal form in
Proposition~\ref{mcq08} involves fewer than $n$ coordinates, we extend it to a full
system of formal coordinates and assign weight zero to the remaining variables.

Since $\mathcal F$ has canonical singularities, the nonzero weights are not all of the
same sign. Replacing $V$ by $-V$ if necessary and reordering the coordinates, we may
write
\[
\lambda_i>0 \quad (i\in P),\qquad
\lambda_j<0 \quad (j\in N),\qquad
\lambda_k=0 \quad (k\in Z),
\]
where $P,N,Z$ form a partition of $\{1,\ldots,n\}$ and both $P$ and $N$ are nonempty.
Choose indices \(p\in P\) and \(q\in N\).

We now define \(n-1\) formal first integrals. For every \(k\in Z\), set
\(
F_k=x_k.
\)
For every \(j\in N\), set
\[
F_j=x_p^{-\lambda_j}x_j^{\lambda_p},
\]
and for every \(i\in P\setminus\{p\}\), set
\[
G_i=x_i^{-\lambda_q}x_q^{\lambda_i}.
\]
All exponents which occur here are positive integers. The collection
\[
\{F_k\}_{k\in Z},\qquad \{F_j\}_{j\in N},\qquad \{G_i\}_{i\in P\setminus\{p\}}
\]
has cardinality
\[
|Z|+|N|+(|P|-1)=n-1.
\]
Let \(\hat f:\widehat X\to \widehat{\mathbb C^{\,n-1}}_0\) be the formal morphism whose
components are these functions.

By construction, each component of \(\hat f\) is annihilated by \(V\). Hence
\(
\widehat{\mathcal F}\subseteq \ker d\hat f.
\)
It remains to prove the reverse inclusion. Let
\[
D=\sum_{r=1}^n A_r\frac{\partial}{\partial x_r}
\]
be a formal vector field such that \(D\) annihilates all components of \(\hat f\). From
\(D(F_k)=D(x_k)=0\) for \(k\in Z\), we get \(A_k=0\) for all \(k\in Z\).

For \(j\in N\), the equation \(D(F_j)=0\) is equivalent, after dividing by the common
monomial factor, to
\[
(-\lambda_j)x_jA_p+\lambda_p x_pA_j=0.
\]
Taking \(j=q\), we see that \(x_p\) divides \(A_p\). Thus we can write
\(A_p=\lambda_p x_p h\) for some formal power series \(h\). Substituting this into the
same equation gives
\(
A_j=\lambda_j x_j h
\)
for every \(j\in N\).

Now let \(i\in P\setminus\{p\}\). The equation \(D(G_i)=0\) gives
\[
(-\lambda_q)x_qA_i+\lambda_i x_iA_q=0.
\]
Since \(A_q=\lambda_q x_qh\), it follows that
\(
A_i=\lambda_i x_i h.
\)
Therefore
\[
D=h\sum_{r=1}^n \lambda_r x_r\frac{\partial}{\partial x_r}=hV.
\]
Thus every formal vector field tangent to the fibres of \(\hat f\) is a multiple of
\(V\). Hence \(\ker d\hat f=\widehat{\mathcal F}\), as required.
\end{proof}

\section{The Bott partial connection}\label{bottconnectionsection}

We first recall the Bott partial connection and its dual description. We then explain
how holomorphic first integrals produce flat full connections and prove
Theorem~\ref{bottdimtwo}.

\subsection{The regular local model}

\begin{definition}\label{bottconnection}
Let \(X\) be a smooth variety and let \(\mathcal F\) be a regular foliation on \(X\).
Let \(T_{\mathcal F}\subset T_X\) be its tangent bundle and let
\(N_{\mathcal F}:=T_X/T_{\mathcal F}\). The Bott partial connection on
\(N_{\mathcal F}\) is the map
\[
\nabla^{\mathrm{Bott}}:T_{\mathcal F}\times N_{\mathcal F}\to N_{\mathcal F}
\]
defined by
\[
\nabla^{\mathrm{Bott}}_v([w])=[v,w]\bmod T_{\mathcal F},
\]
where \(v\) is a local section of \(T_{\mathcal F}\), and \([w]\) denotes the image of a
local vector field \(w\) in \(N_{\mathcal F}\).
\end{definition}

\begin{remark}
The definition is independent of the choice of the representative \(w\). Indeed, if
\(w\) is replaced by \(w+u\), with \(u\in T_{\mathcal F}\), then
\([v,u]\in T_{\mathcal F}\) since \(T_{\mathcal F}\) is closed under the Lie bracket.
\end{remark}

\begin{proposition}\label{dualBott}
Let \(X\) be a smooth variety and let \(\mathcal F\) be a regular foliation on \(X\).
Identifying
\[
N_{\mathcal F}^*
=
\{\alpha\in\Omega_X^1\mid \alpha(u)=0 \text{ for all } u\in T_{\mathcal F}\},
\]
the dual Bott partial connection on \(N_{\mathcal F}^*\) is given by
\[
\nabla^{\mathrm{Bott},*}_v\alpha=\mathcal N_v\alpha
\]
for every local section \(v\in T_{\mathcal F}\) and every local section
\(\alpha\in N_{\mathcal F}^*\).
\end{proposition}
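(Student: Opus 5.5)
The identity is local, so I will verify it on an open set carrying a convenient frame and then glue by uniqueness. Here \(\mathcal N_v\) denotes the Lie derivative along \(v\), and the statement implicitly asserts that \(\mathcal N_v\) preserves \(N^*_{\mathcal F}\). The dual Bott connection is defined by the Leibniz rule for the pairing:
\[
(\nabla^{\mathrm{Bott},*}_v\alpha)([w])
=
v\bigl(\alpha([w])\bigr)-\alpha\bigl(\nabla^{\mathrm{Bott}}_v[w]\bigr).
\]
Here \(\alpha([w])=\alpha(w)\) is well defined because \(\alpha\) annihilates \(T_{\mathcal F}\).

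The plan has three steps. First, I check that \(\mathcal N_v\alpha\) again annihilates \(T_{\mathcal F}\). Cartan's formula gives \(\mathcal N_v\alpha=\iota_v d\alpha+d(\iota_v\alpha)\), and the second term vanishes since \(\iota_v\alpha=\alpha(v)=0\). For \(u\in T_{\mathcal F}\) one then has
\[
(\mathcal N_v\alpha)(u)=d\alpha(v,u)=v(\alpha(u))-u(\alpha(v))-\alpha([v,u]).
\]
All three terms vanish, because \(\alpha(u)=\alpha(v)=0\) and \([v,u]\in T_{\mathcal F}\) by involutivity. Equivalently, one may use the identity \((\mathcal N_v\alpha)(u)=v(\alpha(u))-\alpha([v,u])\) directly.

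Second, I compare the two operators on an arbitrary local vector field \(w\), using the same identity for the Lie derivative of a \(1\)-form:
\[
(\mathcal N_v\alpha)(w)=v(\alpha(w))-\alpha([v,w]).
\]
By definition \(\nabla^{\mathrm{Bott}}_v[w]=[v,w]\bmod T_{\mathcal F}\), and \(\alpha\) kills \(T_{\mathcal F}\), so \(\alpha(\nabla^{\mathrm{Bott}}_v[w])=\alpha([v,w])\). Hence the two sides agree on every \([w]\). Since \(N^*_{\mathcal F}\) is the dual of the vector bundle \(N_{\mathcal F}\), they agree as sections.

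Third, I note that both sides are \(\mathcal O_X\)-linear in \(v\) and satisfy the Leibniz rule in \(\alpha\). For \(\mathcal N_{fv}\alpha=f\mathcal N_v\alpha+df\,\alpha(v)\), the extra term vanishes because \(\alpha(v)=0\). This confirms that \(\mathcal N_v\) is a genuine partial connection, consistent with the identification above. No step is a real obstacle. The only point needing care is the first step, namely that \(\mathcal N_v\) preserves the annihilator of \(T_{\mathcal F}\); this is exactly where involutivity enters, mirroring the well-definedness remark for \(\nabla^{\mathrm{Bott}}\).
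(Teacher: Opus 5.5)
Your proposal is correct and follows essentially the same route as the paper: you first check, using involutivity, that $\mathcal L_v\alpha$ still annihilates $T_{\mathcal F}$, and then compare it with the Leibniz-rule definition of the dual Bott connection on an arbitrary class $[w]$ via $(\mathcal L_v\alpha)(w)=v(\alpha(w))-\alpha([v,w])$. The Cartan-formula variant of the first step and your extra check of $\mathcal O_X$-linearity in $v$ are harmless additions that the paper's argument does not need.
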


\begin{proof}
First, \(\mathcal N_v\alpha\) is again a section of \(N_{\mathcal F}^*\). Indeed, for
\(u\in T_{\mathcal F}\), we have
\[
(\mathcal N_v\alpha)(u)=v(\alpha(u))-\alpha([v,u]).
\]
Since \(\alpha(u)=0\) and \([v,u]\in T_{\mathcal F}\), it follows that
\((\mathcal L_v\alpha)(u)=0\).

Let \([w]\) be a local section of \(N_{\mathcal F}\), represented by a local vector field
\(w\). By definition,
\[
\nabla^{\mathrm{Bott}}_v[w]=[v,w]\bmod T_{\mathcal F}.
\]
The dual connection is characterized by
\[
(\nabla^{\mathrm{Bott},*}_v\alpha)([w])
=
v\bigl(\alpha([w])\bigr)
-
\alpha\bigl(\nabla^{\mathrm{Bott}}_v[w]\bigr).
\]
Since \(\alpha\) annihilates \(T_{\mathcal F}\), this gives
\[
(\nabla^{\mathrm{Bott},*}_v\alpha)([w])
=
v(\alpha(w))-\alpha([v,w])
=
(\mathcal L_v\alpha)(w).
\]
Thus \(\nabla^{\mathrm{Bott},*}_v\alpha=\mathcal L_v\alpha\).
\end{proof}

\begin{proposition}[Regular local model for Conjecture~\ref{conjbottconnection}]
\label{prop:regular-local-model}
Let \(X\) be a smooth variety, and let
\(\mathcal F\) be a regular holomorphic foliation of codimension \(q\), with tangent
bundle \(T_{\mathcal F}\subset T_X\).
Let \(x\in X\). After replacing $X$ by a sufficiently small analytic neighbourhood of $x$, the
following conditions are equivalent.

\begin{enumerate}[label=\textup{(\arabic*)}]
\item The foliation \(\mathcal F\) admits holomorphic first integrals
\(f^1,\ldots,f^q\), with \(df^1,\ldots,df^q\) linearly independent, such that
\(T_{\mathcal F}=\bigcap_i\ker df^i\).

\item There exists a flat holomorphic connection
\[
\nabla:N_{\mathcal F}\longrightarrow N_{\mathcal F}\otimes\Omega_X^1
\]
whose restriction along \(T_{\mathcal F}\) is the Bott partial connection, and
such that \(N_{\mathcal F}^*\) admits a frame of closed holomorphic \(1\)-forms
which are parallel with respect to the dual connection \(\nabla^*\).
\end{enumerate}

Moreover, in any local conormal frame
\(\omega=(\omega^1,\ldots,\omega^q)\), written as a row vector, the above
connection can be described by a matrix \(\eta\) of holomorphic \(1\)-forms
satisfying
\[
d\omega=-\omega\wedge\eta,
\qquad
\nabla^*\omega=\omega\eta,
\qquad
d\eta+\eta\wedge\eta=0.
\]
\end{proposition}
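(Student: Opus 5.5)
We prove the two implications separately, together with the matrix formalism. Throughout, we work on a small polydisc neighbourhood of \(x\) on which \(\mathcal F\) is given by a holomorphic submersion \(F=(y_1,\ldots,y_q)\) onto a polydisc. Such coordinates exist by the holomorphic Frobenius theorem, since \(\mathcal F\) is regular. We use \(N_{\mathcal F}^*\) as the annihilator of \(T_{\mathcal F}\) in \(\Omega^1_X\) and the dual description of the Bott connection given in Proposition~\ref{dualBott}, namely \(\nabla^{\mathrm{Bott},*}_v\alpha=\mathcal L_v\alpha\).

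\textbf{Matrix formalism.} Fix a conormal frame \(\omega=(\omega^1,\ldots,\omega^q)\). Any connection on \(N_{\mathcal F}^*\) can be written \(\nabla^*\omega=\omega\eta\) for a \(q\times q\) matrix \(\eta\) of \(1\)-forms, and flatness is equivalent to \(d\eta+\eta\wedge\eta=0\). The condition that \(\nabla^*\) restricts to the Bott connection along \(T_{\mathcal F}\) reads
\[
\iota_v\,d\omega=\omega\,\eta(v)\qquad\text{for all } v\in T_{\mathcal F},
\]
because \(\mathcal L_v\omega=\iota_vd\omega+d\iota_v\omega=\iota_vd\omega\). By Frobenius integrability, \(d\omega^i\) lies in the ideal generated by the \(\omega^j\), so \(d\omega=-\omega\wedge\theta\) for some matrix \(\theta\) of \(1\)-forms. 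Contracting with \(v\in T_{\mathcal F}\) gives \(\iota_v d\omega=\omega\,\theta(v)\), so we may take \(\eta\) with \(d\omega=-\omega\wedge\eta\). Under this normalisation, the Bott condition is exactly the condition that \(\eta\) agrees with \(\theta\) on \(T_{\mathcal F}\).

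\textbf{(1)\(\Rightarrow\)(2).} Take \(\omega^i=df^i\). This is a frame of \(N_{\mathcal F}^*\), since the \(df^i\) are independent and annihilate \(T_{\mathcal F}\). Declare this frame horizontal, i.e.\ set \(\eta=0\). Then \(\nabla^*\) is flat, the frame consists of closed parallel forms, and the identity \(d\omega=0=-\omega\wedge\eta\) holds. The restriction to \(T_{\mathcal F}\) is the Bott connection because \(\mathcal L_v df^i=d(v f^i)=0=\nabla^*_v df^i\). The connection \(\nabla\) on \(N_{\mathcal F}\) is then defined as the dual of \(\nabla^*\).

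\textbf{(2)\(\Rightarrow\)(1).} Let \(\omega^1,\ldots,\omega^q\) be the given frame of closed parallel forms. Shrink the neighbourhood to a simply connected (for instance convex) one. By the holomorphic Poincaré lemma, each \(\omega^i=df^i\) for a holomorphic function \(f^i\). The differentials \(df^i\) form a frame of \(N_{\mathcal F}^*\), so they are linearly independent at every point and \(T_{\mathcal F}=\bigcap_i\ker df^i\). Hence the \(f^i\) are the required first integrals.

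\textbf{Main subtlety.} The only delicate point is that in (2), flatness and the parallel condition are in fact superfluous for the converse, since closedness alone suffices. One must nevertheless check that the connection built in (1) is a genuine full connection on all of the neighbourhood, not merely a partial one. This holds because the frame \(df^i\) is global on the neighbourhood.
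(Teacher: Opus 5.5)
\paragraph{The equivalence.} Your treatment of \((1)\Leftrightarrow(2)\) is the paper's argument. You declare the \(df^i\) parallel and check the Bott condition via Cartan's formula. Conversely, you integrate the closed conormal frame by the Poincaré lemma, and you are right that only closedness is used there.

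\paragraph{The gap in the ``Moreover'' clause.} You must show that the connection of (2), written in an \emph{arbitrary} conormal frame \(\omega\) as \(\nabla^*\omega=\omega\eta\), satisfies \(d\omega=-\omega\wedge\eta\). The identity \(d\eta+\eta\wedge\eta=0\) is just flatness, which you do cover. Your matrix paragraph never proves the first identity. Integrability produces \emph{some} \(\theta\) with \(d\omega=-\omega\wedge\theta\), and ``we may take \(\eta\)'' equal to it \emph{chooses} a connection instead of analysing the given one. In (1)\(\Rightarrow\)(2) you check the identities only in the special frame \(df\), where \(\eta=0\).

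For the given \(\nabla\), the Bott condition yields only \(\eta(v)=\theta(v)\) for \(v\in T_{\mathcal F}\). For \(q\ge2\) this does not imply \(d\omega=-\omega\wedge\eta\), even together with flatness. Take \(\mathbb C^3\) with coordinates \((x,y_1,y_2)\), leaves the \(x\)-lines, \(\omega=(dy_1,dy_2)\), and \(\eta=g^{-1}dg\) with \(g=\bigl(\begin{smallmatrix}1&y_2\\0&1\end{smallmatrix}\bigr)\). This connection is flat and extends the Bott connection. Yet \(d\omega=0\neq-\omega\wedge\eta=(0,-dy_1\wedge dy_2)\), and it admits no closed parallel frame.

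\paragraph{How to close it.} The missing ingredient is the closed parallel frame from (2). If \(\alpha\) is that frame and \(\omega=\alpha g\), then \(\nabla^*\omega=\alpha\,dg=\omega\,g^{-1}dg\). Hence \(\eta=g^{-1}dg\) and \(d\omega=-\alpha\wedge dg=-\omega\wedge\eta\).

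The paper runs the complementary direction. It starts from any \(\eta\) with \(d\omega=-\omega\wedge\eta\); contracting with \(v\in T_{\mathcal F}\) shows that \(\nabla^*\omega=\omega\eta\) then automatically extends the Bott connection. Assuming \(d\eta+\eta\wedge\eta=0\), it solves \(dg=-\eta g\). It then checks that \(\alpha=\omega g\) is parallel and closed, since \(d\alpha=-\omega\wedge(\eta g+dg)=0\).

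Either computation closes the gap. As written, your proposal proves the equivalence but not the matrix description.
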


\begin{proof}
Assume first that \((1)\) holds. Put \(\alpha^i=df^i\). Then
\(\alpha=(\alpha^1,\ldots,\alpha^q)\) is a frame of \(N_{\mathcal F}^*\)
consisting of closed holomorphic \(1\)-forms. Define \(\nabla^*\) by declaring
this frame to be parallel. This connection is flat, and its dual gives a flat
connection \(\nabla\) on \(N_{\mathcal F}\).

We check that \(\nabla\) restricts to the Bott partial connection. Equivalently,
we check the dual statement. If \(V\in T_{\mathcal F}\), then
\(i_V\alpha^i=0\) and \(d\alpha^i=0\), hence Cartan's formula gives
\(\mathcal L_V\alpha^i=0\). Since also \(\nabla^*_V\alpha^i=0\) by construction,
\(\nabla^*\) agrees with the dual Bott partial connection along
\(T_{\mathcal F}\). Thus \((2)\) holds.

Conversely, assume that \((2)\) holds, and let
\(\alpha^1,\ldots,\alpha^q\) be a closed \(\nabla^*\)-parallel frame of
\(N_{\mathcal F}^*\). After shrinking, the holomorphic Poincaré lemma gives
holomorphic functions \(f^1,\ldots,f^q\) with \(\alpha^i=df^i\). Since the
\(\alpha^i\) form a frame of the conormal bundle, their common kernel is
\(T_{\mathcal F}\). Hence \(T_{\mathcal F}=\bigcap_i\ker df^i\), so the
\(f^i\) are local holomorphic first integrals.

It remains to record the matrix form. Let
\(\omega=(\omega^1,\ldots,\omega^q)\) be any local frame of \(N_{\mathcal F}^*\).
Since \(\mathcal F\) is integrable, the ideal generated by the \(\omega^i\) is
closed under exterior differentiation. Therefore there exists a matrix
\(\eta\) of holomorphic \(1\)-forms such that
\[
d\omega=-\omega\wedge\eta.
\]
Define \(\nabla^*\) in this frame by \(\nabla^*\omega=\omega\eta\). If
\(V\in T_{\mathcal F}\), then \(i_V\omega=0\), and Cartan's formula gives
\[
\mathcal L_V\omega=i_Vd\omega
=i_V(-\omega\wedge\eta)
=\omega\,\eta(V)
=\nabla^*_V\omega.
\]
Thus this connection extends the dual Bott partial connection.

If moreover \(d\eta+\eta\wedge\eta=0\), then locally one can solve
\(dg=-\eta g\) with \(g\colon U\to \operatorname{GL}_q(\mathbb C)\). For
\(\alpha=\omega g\), we have \(\nabla^*\alpha=0\), and using
\(d\omega=-\omega\wedge\eta\) and \(dg=-\eta g\), one gets
\[
d\alpha=d(\omega g)=(d\omega)g-\omega\wedge dg
=-\omega\wedge(\eta g+dg)=0.
\]
Hence \(\alpha\) is a closed parallel conormal frame. This is precisely the
closed parallel frame appearing in \((2)\).
\end{proof}

\subsection{Meromorphic extensions and proof of Theorem~\ref{bottdimtwo}}

We next record the construction in the presence of a degeneracy locus.

\begin{lemma}\label{lem:first-integrals-bott-connection}
Let $V$ be a smooth analytic variety and let $\mathcal F$ be a regular
foliation of corank $q$ on $V$. Suppose that there are holomorphic functions
$F_1,\ldots,F_q$ such that, on the dense open subset
\[
V^\circ:=\{dF_1\wedge\cdots\wedge dF_q\neq0\},
\]
the forms $dF_1,\ldots,dF_q$ define $\mathcal F$. Then there is a unique flat
meromorphic connection $\nabla^*$ on $N_{\mathcal F}^*$ whose restriction to
$V^\circ$ satisfies
\[
\nabla^*(dF_i)=0
\qquad (1\le i\le q).
\]
Its dual connection extends the Bott partial connection, and the $dF_i$ form a closed
parallel conormal frame on $V^\circ$.
\end{lemma}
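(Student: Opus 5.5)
The plan is to build the connection by hand in the frame $dF_1,\ldots,dF_q$, express it in an arbitrary holomorphic conormal frame, and read off meromorphy from Cramer's rule.

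\emph{Matrix in a holomorphic frame.} Since $\mathcal F$ is regular, $N_{\mathcal F}^*$ is a holomorphic vector bundle of rank $q$ on $V$. Each $dF_i$ annihilates $T_{\mathcal F}$ on the dense set $V^\circ$, hence everywhere by continuity, so each $dF_i$ is a global holomorphic section of $N_{\mathcal F}^*$. Fix a local holomorphic frame $\omega=(\omega^1,\ldots,\omega^q)$ of $N_{\mathcal F}^*$ on an open set $W$, and write $dF=\omega A$ with $A$ a holomorphic $q\times q$ matrix. Then $\det A$ is not identically zero on any component of $W$, and its zero set is exactly $W\setminus V^\circ$.

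\emph{Definition.} On $W$ the condition $\nabla^*(dF)=0$ forces $\nabla^*\omega=\nabla^*(dF\,A^{-1})=dF\otimes d(A^{-1})=\omega\,A\,d(A^{-1})$. So I define $\nabla^*\omega=\omega\eta$ with
\[
\eta:=A\,d(A^{-1})=-dA\,A^{-1}.
\]
By Cramer's rule this is a matrix of meromorphic $1$-forms with poles only along $\{\det A=0\}$. A change of holomorphic frame transforms $\eta$ by the usual gauge rule. Both local definitions agree on $V^\circ$, because each is the unique connection there that kills the frame $dF$. Since $V^\circ$ is dense, the local definitions glue by the identity theorem.

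\emph{Uniqueness.} Any two such connections coincide on the dense open $V^\circ$. Their difference is a meromorphic $\operatorname{End}$-valued $1$-form vanishing on a dense open set, hence it is zero.

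\emph{Flatness.} One computes $d\eta+\eta\wedge\eta=-dA\wedge d(A^{-1})+dA\,A^{-1}\wedge dA\,A^{-1}$. Substituting $d(A^{-1})=-A^{-1}dA\,A^{-1}$ shows this is $0$ on $V^\circ$, hence identically as a meromorphic form. More simply, the connection has a parallel frame on $V^\circ$, so it is flat there.

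\emph{Bott compatibility and closedness.} On $V^\circ$, take $v\in T_{\mathcal F}$. Cartan's formula gives $\mathcal L_v dF_i=d(v F_i)=0$, so by Proposition~\ref{dualBott} the dual Bott partial connection kills $dF_i$, exactly as $\nabla^*$ does. The two therefore agree along $T_{\mathcal F}$ on $V^\circ$. Since $\eta(v)$ is a meromorphic function that equals the holomorphic Bott coefficient on $V^\circ$, it is holomorphic, and the agreement extends to all of $V$ wherever $\nabla^*$ is defined. Dualizing gives the statement for $\nabla$. Finally, the $dF_i$ are closed and parallel, and they form a frame on $V^\circ$ by definition of $V^\circ$.

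\emph{Main obstacle.} The only delicate point is meromorphy, i.e.\ that $A^{-1}$ is meromorphic and not merely defined on $V^\circ$. This requires $V^\circ$ to be dense in every connected component of $W$, which holds by hypothesis since $\det A\not\equiv0$ there.
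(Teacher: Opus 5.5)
Your proposal is correct and follows essentially the same route as the paper: you write $(dF_1,\ldots,dF_q)=\omega A$ in a holomorphic conormal frame, take the pure-gauge matrix $-dA\,A^{-1}$, glue using the gauge law and density of $V^\circ$, and get Bott compatibility from Cartan's formula and Proposition~\ref{dualBott}; your remark that $\eta(v)$ is in fact holomorphic for $v\in T_{\mathcal F}$ is a small bonus. One slip: $d\eta=d\bigl(A\,d(A^{-1})\bigr)=+\,dA\wedge d(A^{-1})$, so your displayed expression for $d\eta+\eta\wedge\eta$ has the wrong sign and does not cancel as written, but your fallback argument (a parallel frame on $V^\circ$ forces flatness there, hence everywhere as a meromorphic identity) is correct.
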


\begin{proof}
On $V^\circ$, the forms $dF_1,\ldots,dF_q$ are a global frame of
$N_{\mathcal F}^*$. Declaring this frame horizontal defines a global flat connection
there.

Let $W\subset V$ be an open set on which $N_{\mathcal F}^*$ has a holomorphic
frame $\varepsilon=(\varepsilon_1,\ldots,\varepsilon_q)$, written as a row vector.
Write
\[
\alpha:=(dF_1,\ldots,dF_q)=\varepsilon A
\]
for a holomorphic matrix $A$. On $W\cap V^\circ$, write
$\nabla^*\varepsilon=\varepsilon\Omega$. The condition $\nabla^*\alpha=0$ gives
\[
\Omega=-dA\,A^{-1}.
\]
The matrix $A^{-1}$ is meromorphic along $\det A=0$, so $\Omega$ extends
meromorphically to $W$.

These local matrices satisfy the usual gauge-transformation law. Indeed, on an
overlap, if $\varepsilon'=\varepsilon B$, then
\[
\Omega'=B^{-1}\Omega B+B^{-1}dB.
\]
Hence the locally defined meromorphic connections glue uniquely. Equivalently, they
all restrict to the same globally defined connection on the dense open set $V^\circ$.
This is ordinary sheaf-theoretic gluing of connection matrices, not holonomy
continuation along paths.

Since $\Omega=-dA\,A^{-1}$ is a pure gauge, it satisfies
\[
d\Omega+\Omega\wedge\Omega=0.
\]
Thus the meromorphic connection is flat. Finally,
if $v\in T_{\mathcal F}$, then
\[
\mathcal L_v(dF_i)=d(v(F_i))=0=\nabla_v^*(dF_i).
\]
By Proposition~\ref{dualBott}, the restriction of $\nabla^*$ along
$T_{\mathcal F}$ is the dual Bott connection on the dense open set, and hence
meromorphically on $V$. Dualising proves the assertion for $N_{\mathcal F}$.
\end{proof}

\begin{proof}[Proof of Theorem~\ref{bottdimtwo}]
Assume first that the canonical-singularity case of
Conjecture~\ref{conjectureholomorphiccanonical} holds. Let $x\in X$ and choose an
analytic neighbourhood $U$ on which a single holomorphic map
\[
F=(F_1,\ldots,F_q):U\longrightarrow\mathbb C^q
\]
induces $\mathcal F$. Put $V=U\setminus\operatorname{Sing}(\mathcal F)$. The
foliation is regular on $V$, so Lemma~\ref{lem:first-integrals-bott-connection}
produces the required flat meromorphic full connection on $N_{\mathcal F}|_V$ and
the required closed parallel frame on a dense open subset. This proves that
Conjecture~\ref{conjectureholomorphiccanonical}\textup{(1)} implies
Conjecture~\ref{conjbottconnection}.

If $\dim X=2$, the required holomorphic first integral is supplied by
Theorem~\ref{canonicalcases}\textup{(1)}. Hence
Conjecture~\ref{conjbottconnection} holds in dimension two.
\end{proof}

\begin{remark}\label{remark-bott-no-homotopy}
No homotopy or holonomy gluing is used in Theorem~\ref{bottdimtwo}. The conjectural
holomorphic integrability statement supplies one map $F$ on the whole neighbourhood
$U$, rather than unrelated first integrals on an open cover. Its differentials form a
global conormal frame on a dense open subset. The only ``gluing'' is the standard
compatibility of the meromorphic connection matrices $-dA\,A^{-1}$ under changes of
holomorphic frame; it is expressed by the gauge-transformation formula
\[
\Omega'=B^{-1}\Omega B+B^{-1}dB.
\]
Thus this gluing is sheaf-theoretic and does not involve analytic continuation along
paths.
\end{remark}

\begin{remark}[Explicit surface model]
In dimension two the connection can be written explicitly. Suppose that, by
Theorem~\ref{canonicalcases}\textup{(1)}, there are analytic coordinates \(x,y\) such that the foliation
is generated by
\[
v=nx\partial_x-my\partial_y,
\]
where \(m,n>0\). Let \(U_x=\{x\neq0\}\) and \(U_y=\{y\neq0\}\). On \(U_x\), the normal
bundle \(N_{\mathcal F}\) is generated by \(e_y=[\partial_y]\), while on \(U_y\) it is
generated by \(e_x=[\partial_x]\). On the overlap \(U_x\cap U_y\), the relation
\([v]=0\) gives
\[
nx[\partial_x]-my[\partial_y]=0,
\]
and hence
\[
e_y=\frac{nx}{my}e_x.
\]

Define a meromorphic connection \(\nabla\) on
\(N_{\mathcal F}|_{U\setminus\operatorname{Sing}(\mathcal F)}\) by
\[
\nabla e_y=
\left(
m\frac{dx}{x}+(n-1)\frac{dy}{y}
\right)\otimes e_y
\quad \text{on } U_x,
\]
and
\[
\nabla e_x=
\left(
(m-1)\frac{dx}{x}+n\frac{dy}{y}
\right)\otimes e_x
\quad \text{on } U_y.
\]
These two expressions agree on \(U_x\cap U_y\), since
\(e_y=(nx/my)e_x\). Therefore \(\nabla\) is well defined. It is flat, and its dual
connection satisfies
\[
\nabla^*(d(x^m y^n))=0.
\]
Thus \(\nabla\) is the meromorphic flat connection extending the Bott partial connection
constructed in Lemma~\ref{lem:first-integrals-bott-connection}.
\end{remark}

\printbibliography

@incollection{Andre2004,
  author    = {Andr\'{e}, Yves},
  title     = {Sur la conjecture des $p$-courbures de Grothendieck--Katz et un probl\`eme de Dwork},
  booktitle = {Geometric Aspects of Dwork Theory},
  publisher = {de Gruyter},
  address   = {Berlin},
  year      = {2004},
  pages     = {55--112}
}

@article{Bost2001,
  author  = {Bost, Jean-Benoit},
  title   = {Algebraic Leaves of Algebraic Foliations over Number Fields},
  journal = {Publications Math\'{e}matiques de l'IH\'{E}S},
  volume  = {93},
  year    = {2001},
  pages   = {161--221},
  doi     = {10.1007/s10240-001-8191-3}
}

@article{Cano2004,
  author  = {Cano, Felipe},
  title   = {Reduction of the Singularities of Codimension One Singular Foliations in Dimension Three},
  journal = {Annals of Mathematics},
  volume  = {160},
  number  = {3},
  year    = {2004},
  pages   = {907--1011}
}

@article{CasciniSpicer2021,
  author  = {Cascini, Paolo and Spicer, Calum},
  title   = {MMP for Co-rank One Foliations on Threefolds},
  journal = {Inventiones Mathematicae},
  volume  = {225},
  number  = {2},
  year    = {2021},
  pages   = {603--690}
}

@article{CerveauLinsNeto2008,
  author  = {Cerveau, Dominique and Lins Neto, Alcides},
  title   = {Frobenius Theorem for Foliations on Singular Varieties},
  journal = {Bulletin of the Brazilian Mathematical Society},
  volume  = {39},
  year    = {2008},
  pages   = {447--469}
}

@article{Druel2025,
  author  = {Druel, St\'{e}phane},
  title   = {$A$-analyticity of Separatrices of Foliations},
  journal = {Beitr\"age zur Algebra und Geometrie},
  year    = {2025}
}

@unpublished{EkedahlShepherdBarronTaylor1999,
  author = {Ekedahl, Torsten and Shepherd-Barron, Nicholas I. and Taylor, Richard},
  title  = {A Conjecture on the Existence of Compact Leaves of Algebraic Foliations},
  note   = {Preprint},
  year   = {1999}
}

@article{Katz1972,
  author  = {Katz, Nicholas M.},
  title   = {Algebraic Solutions of Differential Equations ($p$-Curvature and the Hodge Filtration)},
  journal = {Inventiones Mathematicae},
  volume  = {18},
  year    = {1972},
  pages   = {1--118}
}

@book{KollarMori1998,
  author    = {Koll\'{a}r, J\'{a}nos and Mori, Shigefumi},
  title     = {Birational Geometry of Algebraic Varieties},
  series    = {Cambridge Tracts in Mathematics},
  volume    = {134},
  publisher = {Cambridge University Press},
  address   = {Cambridge},
  year      = {1998}
}

@article{LorayPereiraTouzet2018,
  author  = {Loray, Frank and Pereira, Jorge Vit\'{o}rio and Touzet, Fr\'{e}d\'{e}ric},
  title   = {Singular Foliations with Trivial Canonical Class},
  journal = {Inventiones Mathematicae},
  volume  = {213},
  year    = {2018},
  pages   = {1327--1380}
}

@article{Malgrange1976,
  author  = {Malgrange, Bernard},
  title   = {Frobenius avec singularit\'{e}s. I. Codimension un},
  journal = {Publications Math\'{e}matiques de l'IH\'{E}S},
  volume  = {46},
  year    = {1976},
  pages   = {163--173}
}

@article{MatteiMoussu1980,
  author  = {Mattei, Jean-Fran\c{c}ois and Moussu, Robert},
  title   = {Holonomie et int\'{e}grales premi\`eres},
  journal = {Annales Scientifiques de l'\'{E}cole Normale Sup\'{e}rieure},
  series  = {4},
  volume  = {13},
  number  = {4},
  year    = {1980},
  pages   = {469--523}
}

@article{McQuillan2008,
  author  = {McQuillan, Michael},
  title   = {Canonical Models of Foliations},
  journal = {Pure and Applied Mathematics Quarterly},
  volume  = {4},
  number  = {3},
  year    = {2008},
  pages   = {877--1012}
}

@article{McQuillanPanazzolo2013,
  author  = {McQuillan, Michael and Panazzolo, Daniel},
  title   = {Almost \'{E}tale Resolution of Foliations},
  journal = {Journal of Differential Geometry},
  volume  = {95},
  number  = {2},
  year    = {2013},
  pages   = {279--319}
}

@book{MiyaokaPeternell1997,
  author    = {Miyaoka, Yoichi and Peternell, Thomas},
  title     = {Geometry of Higher Dimensional Algebraic Varieties},
  series    = {DMV Seminar},
  volume    = {26},
  publisher = {Birkh\"auser},
  year      = {1997}
}

@article{Moussu1998,
  author  = {Moussu, Robert},
  title   = {Sur l'existence d'int\'{e}grales premi\`eres holomorphes},
  journal = {Annali della Scuola Normale Superiore di Pisa, Classe di Scienze},
  series  = {4},
  volume  = {26},
  number  = {4},
  year    = {1998},
  pages   = {709--717}
}

@article{Seidenberg1968,
  author  = {Seidenberg, Abraham},
  title   = {Reduction of Singularities of the Differential Equation $A\,dy=B\,dx$},
  journal = {American Journal of Mathematics},
  volume  = {90},
  year    = {1968},
  pages   = {248--269}
}

@article{SilvaEtAl2024,
  author  = {da Silva, A. B. and others},
  title   = {A Remark on First Integrals of Vector Fields},
  journal = {Publicacions Matem\`atiques},
  volume  = {68},
  number  = {1},
  year    = {2024},
  pages   = {103--109}
}

@article{Spicer2020,
  author  = {Spicer, Calum},
  title   = {Higher-Dimensional Foliated Mori Theory},
  journal = {Compositio Mathematica},
  volume  = {156},
  number  = {1},
  year    = {2020},
  pages   = {1--38}
}

@article{SpicerSvaldi2021,
  author  = {Spicer, Calum and Svaldi, Roberto},
  title   = {Local and Global Applications of the Minimal Model Program for Co-rank One Foliations},
  journal = {Journal of the European Mathematical Society},
  volume  = {24},
  number  = {11},
  year    = {2022},
  pages   = {3969--4025}
}

@article{Xu2025,
  author  = {Xu, Yuting},
  title   = {$p$-Integrability},
  journal = {arXiv preprint},
  year    = {2025},
  eprint  = {2507.22056},
  archiveprefix = {arXiv}
}

@article{Jouanolou1978,
  author  = {Jouanolou, Jean-Pierre},
  title   = {Hypersurfaces solutions d'une {\'e}quation de {Pfaff} analytique},
  journal = {Mathematische Annalen},
  volume  = {232},
  number  = {3},
  pages   = {239--245},
  year    = {1978},
  doi     = {10.1007/BF01351428}
}

@article{PereiraSpicer2020,
  author  = {Pereira, Jorge Vit{\'o}rio and Spicer, Calum},
  title   = {Hypersurfaces Quasi-Invariant by Codimension One Foliations},
  journal = {Mathematische Annalen},
  volume  = {378},
  number  = {1--2},
  pages   = {613--635},
  year    = {2020},
  doi     = {10.1007/s00208-019-01833-4}
}

@article{Katz1982,
  author  = {Katz, Nicholas M.},
  title   = {A Conjecture in the Arithmetic Theory of Differential Equations},
  journal = {Bulletin de la Soci{\'e}t{\'e} Math{\'e}matique de France},
  volume  = {110},
  number  = {2},
  pages   = {203--239},
  year    = {1982},
  doi     = {10.24033/bsmf.1960}
}

@incollection{ChudnovskyChudnovsky1985,
  author    = {Chudnovsky, David V. and Chudnovsky, Gregory V.},
  title     = {Applications of Pad\'{e} Approximations to the Grothendieck Conjecture on Linear Differential Equations},
  booktitle = {Number Theory (New York, 1983--84)},
  series    = {Lecture Notes in Mathematics},
  volume    = {1135},
  publisher = {Springer},
  address   = {Berlin},
  pages     = {52--100},
  year      = {1985},
  doi       = {10.1007/BFb0074601}
}

@article{ClaudonLorayPereiraTouzet2018,
  author  = {Claudon, Beno{\^i}t and Loray, Frank and Pereira, Jorge Vit{\'o}rio and Touzet, Fr{\'e}d{\'e}ric},
  title   = {Compact Leaves of Codimension One Holomorphic Foliations on Projective Manifolds},
  journal = {Annales Scientifiques de l'{\'E}cole Normale Sup{\'e}rieure},
  series  = {4},
  volume  = {51},
  number  = {6},
  pages   = {1457--1506},
  year    = {2018},
  doi     = {10.24033/asens.2379}
}

\end{document}